\newtheorem{Theorem}{Theorem}[section]
\newtheorem{Lemma}[Theorem]{Lemma}
\newtheorem{Proposition}[Theorem]{Proposition}
\newtheorem{Remark}[Theorem]{Remark}
\numberwithin{equation}{section}
\def\C {\mathbb C}
\def\R {\mathbb R}
\newcommand{\<}{\langle}
\renewcommand{\>}{\rangle}
\renewcommand{\(}{\left(}
\renewcommand{\)}{\right)}
\newcommand{\id}{\operatorname{Id}}
\newcommand{\inclusion}{\hookrightarrow}
\newcommand{\lesim}{\lesssim}
\newcommand{\Div}{\operatorname{Div}}
\newcommand{\p}{\partial}
\newcommand{\Vol}{\operatorname{Vol}}
\renewcommand{\Re}{\operatorname{Re}}
\begin{document}
\title[Direct and Inverse problems for nonlinear Maxwell equations]{Direct and Inverse problems for the nonlinear time-harmonic Maxwell equations in~Kerr-type~media}

\author[Yernat M. Assylbekov]{Yernat M. Assylbekov }
\address{Department of Computational and Applied Mathematics, Rice University, Houston, TX 77005, USA}
\email{yernat.assylbekov@gmail.com}

\author[Ting Zhou]{Ting Zhou}
\address{Department of Mathematics, Northeastern University, Boston, MA 02115, USA}
\email{t.zhou@neu.edu}

\maketitle

\begin{abstract}
{ In the current paper we consider an inverse boundary value problem of electromagnetism in a nonlinear Kerr medium. We show the unique determination of the electromagnetic material parameters and the nonlinear susceptibility parameters of the medium by making electromagnetic measurements on the boundary. We are interested in the case of the time-harmonic Maxwell equations.}
\end{abstract}


\section{Introduction}\label{section::introduction}


Let $(M,g)$ be a compact $3$-dimensional Riemannian manifold with smooth boundary and let $\mathcal E(\cdot,t)$ and $\mathcal H(\cdot,t)$ be the time-dependent $1$-forms on $M$ representing electric and magnetic fields. By $d$ and $*$ we denote the exterior derivative and the Hodge star operator on $(M,g)$, respectively. Consider the time-dependent Maxwell equations on the manifold, with no scalar charge density and with no current density
\begin{equation}\label{eqn::time-dependent Maxwell}
\begin{cases}
\p_t\mathcal B+*d\mathcal E=0,\\
\p_t\mathcal D-*d\mathcal H=0,\\
*d*\mathcal D=0,\\
*d*\mathcal B=0,
\end{cases}
\end{equation}
where $\mathcal D$ and $\mathcal B$ are $1$-forms representing electric displacement and magnetic induction
$$
\mathcal D=\varepsilon\mathcal E+\mathcal P_{NL}(\mathcal E),\quad \mathcal B=\mu \mathcal H+{ \mathcal M_{NL}}(\mathcal H),
$$
with $\mathcal P_{NL}$ and ${ \mathcal M_{NL}}$ being the nonlinear polarization and nonlinear magnetization, respectively. The (time-independent) functions $\varepsilon$ and $\mu$ on $M$, with positive real parts, represent the material parameters (permettivity and permeability, respectively).
\smallskip

The electric and magnetic fields $\mathcal E$ and $\mathcal H$ are said to be time-harmonic with frequency $\omega>0$ if
$$
\mathcal E(x,t)=E(x)e^{-i\omega t}+\overline{E(x)}e^{i\omega t},\quad \mathcal H(x,t)=H(x)e^{-i\omega t}+\overline{H(x)}e^{i\omega t},
$$
for some complex $1$-forms $E$ and $H$ on $M$. Then the time-averages of the intensities of $\mathcal E$ and of $\mathcal H$ are
$$
\frac{1}{T}\int_0^T|\mathcal E(x,t)|^2_g\,dt=2|E(x)|_g^2,\quad \frac{1}{T}\int_0^T|\mathcal H(x,t)|^2_g\,dt=2|H(x)|_g^2,
$$
where $T=2\pi/\omega$. In a medium with high intensity electric field, the nonlinear polarization is of the form
$$
\mathcal P_{NL}(x,\mathcal E(x,t))=\chi_e(x,|E|_g^2)\mathcal E(x,t),
$$
where $\chi_e$ is the scalar susceptibility depending only on the time-average of the intensity of $\mathcal E$. One of the most common nonlinear polarizations appearing in physics and engineering is the Kerr nonlinearity
\begin{equation}\label{eqn::form of chi_e}
\chi_e(x,|E|_{g}^2)=a(x)|E|_{g}^{2}.
\end{equation}
The reader is refereed to \cite{Nie1993,Stuart1991} for this and other examples of electric nonlinear phenomenas.\smallskip

We also assume that the nonlinear magnetization has the similar form
$$
{ \mathcal M_{NL}}(x,\mathcal H(x,t))=\chi_m(x,|H|_g^2)\mathcal H(x,t),
$$
where $\chi_m$ is the scalar susceptibilities depending only on the time-average of the intensity of $\mathcal H$. Such nonlinear magnetizations appear in the study of metamaterials built by combining an array of wires and split-ring resonators embedded into a Kerr-type dielectric~\cite{ZharovShadrivovKivshar2003}. These metamaterials have complicated form of nonlinear magnetization. However, if the intensity $|H|_g^2$ is sufficiently small, relatively to the resonant frequency, the nonlinear magnetization can be assumed to be of the Kerr-type \cite{KourakisShukla2005,LazaridesTsironis2005,WenXiangDaiTangSuFan2007}
\begin{equation}\label{eqn::form of chi_m}
\chi_m(x,|H|_{g}^2)=b(x)|H|_{g}^{2}.
\end{equation}
This assumption has successful numerical implementation \cite{LazaridesTsironis2005}.\smallskip

For the time-harmonic $\mathcal E$ and $\mathcal H$, the time-dependent Maxwell's system \eqref{eqn::time-dependent Maxwell} reduces to the nonlinear time-harmonic Maxwell equations for complex $1$-forms $E$ and $H$, with a fixed frequency $\omega>0$, will be
\begin{equation}\label{eqn::Maxwell}
\begin{cases}
* dE=i\omega\mu H+i\omega b |H|_{g}^{2}H,\\
* dH=-i\omega \varepsilon E-i\omega a |E|_{g}^{2}E.
\end{cases}
\end{equation}
The complex functions $\mu$ and $\varepsilon$ represent the material parameters (permettivity and permeability, respectively).

\subsection{Direct problem}
First we consider the boundary value problem for the nonlinear Maxwell equations~\eqref{eqn::Maxwell}. We suppose that $\varepsilon,\mu\in C^1(M)$ are complex functions with positive real parts and $a,b\in { C^1}(M)$.\smallskip

The boundary conditions are expressed in terms of \emph{tangential trace}. The latter is defined on $m$-forms by
$$
\mathbf{t}:C^\infty\Omega^{m}(M)\to C^\infty\Omega^{m}(\p M),\quad \mathbf{t}(w)=\imath^*(w),\quad w\in C^\infty\Omega^{m}(M),
$$
where $\imath:\p M\inclusion M$ is the canonical inclusion. Then $\mathbf t$ has its extension to a bounded operator $W^{1,p}\Omega^m(M)\to W^{1-1/p,p}\Omega^m(\p M)$ for $p>1$. Here and in what follows, $W^{1,p}\Omega^m(M)$ and $W^{1-1/p,p}\Omega^m(\p M)$ are standard Sobolev spaces of $m$-forms on $M$ and $\p M$, respectively.\smallskip

To describe the boundary conditions, we introduce the spaces
\begin{align*}
W^{1,p}_{\Div}(M)&=\{u\in W^{1,p}\Omega^1(M):\Div(\mathbf t(u))\in W^{1-1/p,p}\Omega^1(\p M)\},\\
TW^{1-1/p,p}_{\Div}(\p M)&=\{f\in W^{1-1/p,p}\Omega^1(\p M):\Div(f)\in W^{1-1/p,p}\Omega^1(\p M)\},
\end{align*}
where $\Div$ is the surface divergence on $\p M$; see Section~\ref{section::surface divergence} for the exact definition of $\Div$. These spaces are Banach spaces with norms
\begin{align*}
\|u\|_{W^{1,p}_{\Div}(M)}&=\|u\|_{W^{1,p}\Omega^1(M)}+\|\Div(\mathbf t(u))\|_{W^{1-1/p,p}(\p M)},\\
\|u\|_{TW^{1-1/p,p}_{\Div}(\p M)}&=\|f\|_{W^{1-1/p,p}(\p M)}+\|\Div(f)\|_{W^{1-1/p,p}(\p M)}.
\end{align*}
It is not difficult to see that $\mathbf t(W^{1,p}_{\Div}(M))=TW^{1-1/p,p}_{\Div}(\p M)$.\smallskip

Our first main result is the following theorem on well-posedness of the nonlinear Maxwell equations~\eqref{eqn::Maxwell} with prescribed small $\mathbf t(E)$ on $\p M$.

\begin{Theorem}\label{thm::main result 1}
Let $(M,g)$ be a compact $3$-dimensional Riemannian manifold with smooth boundary and let $3<p\leq 6$. Suppose that $\varepsilon,\mu\in C^1(M)$ are complex functions with positive real parts and $a,b\in C^1(M)$. For every $\omega\in \C$, outside a discrete set $\Sigma\subset\C$ of resonant frequencies, there is $\epsilon>0$ such that for all $f\in TW^{1-1/p,p}_{\Div}(\p M)$ with $\|f\|_{TW^{1-1/p,p}_{\Div}(\p M)}<\epsilon$, the Maxwell's equation \eqref{eqn::Maxwell} has a unique solution $(E,H)\in W^{1,p}_{\Div}(M)\times W^{1,p}_{\Div}(M)$ satisfying $\mathbf{t}(E)=f$ and
$$
\|E\|_{W^{1,p}_{\Div}(M)}+\|H\|_{W^{1,p}_{\Div}(M)}\le C\|f\|_{TW^{1-1/p,p}_{\Div}(\p M)},
$$
for some constant $C>0$ independent of $f$.
\end{Theorem}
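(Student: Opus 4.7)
My strategy is to recast the nonlinear boundary value problem as a Banach fixed-point equation, treating the cubic Kerr terms as source perturbations of a linear Maxwell system. This requires three ingredients: a linear solvability theorem outside a discrete resonant set, a cubic Sobolev estimate, and a contraction argument.

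\textbf{Step 1 (Linear theory).} First I would establish that, for $\omega$ outside a discrete set $\Sigma\subset\C$, the inhomogeneous linear system
$$* dE-i\omega\mu H=J_1,\qquad * dH+i\omega\varepsilon E=J_2,\qquad \mathbf{t}(E)=f$$
admits a unique solution with
$$\|E\|_{W^{1,p}_{\Div}(M)}+\|H\|_{W^{1,p}_{\Div}(M)}\le C\bigl(\|J_1\|_{W^{1,p}\Omega^1(M)}+\|J_2\|_{W^{1,p}\Omega^1(M)}+\|f\|_{TW^{1-1/p,p}_{\Div}(\p M)}\bigr).$$
Using the surjectivity $\mathbf{t}(W^{1,p}_{\Div}(M))=TW^{1-1/p,p}_{\Div}(\p M)$ noted just before the theorem, I first lift $f$ to some $E_0\in W^{1,p}_{\Div}(M)$ with $\mathbf t(E_0)=f$, which reduces matters to the zero-trace problem with modified sources. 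Eliminating $H$ then yields a second-order curl-curl system for $E$ which, augmented with the divergence constraint $*d*(\varepsilon E)=0$, is elliptic with Fredholm realization in $W^{1,p}_{\Div}(M)$; analytic Fredholm theory applied to this $\omega$-family produces the discrete exceptional set $\Sigma$ and the bounded linear solution operator $\mathcal L:(J_1,J_2,f)\mapsto (E,H)$.

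\textbf{Step 2 (Nonlinear source estimate).} Because $\dim M=3$ and $3<p\le 6$, the Sobolev embedding $W^{1,p}(M)\hookrightarrow L^\infty(M)$ holds and $W^{1,p}(M)$ is a Banach algebra. Combined with $a,b\in C^1(M)$, this yields the trilinear bound and locally Lipschitz difference estimates
$$\|a|E|_g^2 E\|_{W^{1,p}\Omega^1(M)}\le C_a\|E\|_{W^{1,p}_{\Div}(M)}^3,$$
$$\|a(|E_1|_g^2 E_1-|E_2|_g^2 E_2)\|_{W^{1,p}\Omega^1(M)}\le C_a\bigl(\|E_1\|_{W^{1,p}_{\Div}(M)}^2+\|E_2\|_{W^{1,p}_{\Div}(M)}^2\bigr)\|E_1-E_2\|_{W^{1,p}_{\Div}(M)},$$
with the analogous estimates for $b|H|_g^2 H$.

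\textbf{Step 3 (Fixed point).} Define
$$\Phi(E,H):=\mathcal L\bigl(i\omega b|H|_g^2 H,\,-i\omega a|E|_g^2 E,\,f\bigr),$$
so that a fixed point of $\Phi$ solves \eqref{eqn::Maxwell} with $\mathbf t(E)=f$. Set $R:=2C\|f\|_{TW^{1-1/p,p}_{\Div}(\p M)}$. Combining Steps~1 and~2,
$$\|\Phi(E,H)\|\le C\|f\|+C'R^3,\qquad \|\Phi(E_1,H_1)-\Phi(E_2,H_2)\|\le C''R^2\,\|(E_1,H_1)-(E_2,H_2)\|$$
on the closed ball of radius $R$ in $W^{1,p}_{\Div}(M)\times W^{1,p}_{\Div}(M)$. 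Choosing $\epsilon>0$ so small that $C'(2C\epsilon)^3\le C\epsilon$ and $C''(2C\epsilon)^2<1$, Banach's theorem yields for every $\|f\|<\epsilon$ a unique fixed point $(E,H)$ in this ball, which automatically satisfies the advertised a priori bound.

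\textbf{Main obstacle.} The hard part is Step~1: one must set up the linear Maxwell problem in the precise Banach framework $W^{1,p}_{\Div}(M)$ with boundary trace space $TW^{1-1/p,p}_{\Div}(\p M)$, verify the Fredholm alternative for the associated curl-curl operator with tangential-trace boundary condition, and apply analytic Fredholm theory in the spectral parameter $\omega$ to exhibit $\Sigma$ as a discrete subset of $\C$. Once this linear solvability and norm bound are established, the cubic estimate in Step~2 is a direct consequence of the algebra property (which is precisely why $p>3$ is imposed), and Step~3 is a textbook contraction argument.
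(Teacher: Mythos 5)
Your proposal is correct and follows essentially the same route as the paper: linear well-posedness outside a discrete resonant set, cubic estimates from the Sobolev embedding $W^{1,p}\hookrightarrow L^\infty$ for $p>3$ (the paper's Lemmas~\ref{lemma::product estimate} and~\ref{lemma::product difference estimate}), and a Banach fixed-point argument. The only cosmetic difference is that the paper first subtracts the linear solution $(E_0,H_0)$ with trace $f$ and contracts on the zero-trace remainder via the source-to-solution operator of Theorem~\ref{thm::well posedness new version}, whereas you fold the boundary data into a single solution operator and contract on $(E,H)$ directly; the two formulations are equivalent.
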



\subsection{Inverse problem}

For $\omega>0$ with $\omega\notin \Sigma$, we define the \emph{admittance map} $\Lambda_{\varepsilon,\mu,a,b}^\omega$ as
$$
\Lambda_{\varepsilon,\mu,a,b}^\omega(f)=\mathbf t(H),\quad f\in TW^{1-1/p,p}_{\Div}(\p M),\quad \|f\|_{TW^{1-1/p,p}_{\Div}(\p M)}<\epsilon
$$
where $(E,H)\in W^{1,p}_{\Div}(M)\times W^{1,p}_{\Div}(M)$ is the unique solution of the system \eqref{eqn::Maxwell} with $\mathbf t(E)=f$, guaranteed by Theorem~\ref{thm::main result 1}. Moreover, the estimate provided in Theorem~\ref{thm::main result 1} implies that the admittance map satisfy
$$
\|\Lambda_{\varepsilon,\mu,a,b}^\omega(f)\|_{TW^{1-1/p,p}_{\Div}(\p M)}\le C\|f\|_{TW^{1-1/p,p}_{\Div}(\p M)}<C\epsilon.
$$
The inverse problem is to determine $\varepsilon,\mu,a$ and $b$ from the knowledge of the admittance map $\Lambda_{\varepsilon,\mu,a,b}^\omega$.\smallskip

To state our second main result, let us introduce the notion of admissible manifolds.\medskip

\noindent{\bf Definition. }A compact Riemannian manifold $(M,g)$ with smooth boundary of dimension $n\ge 3$, is said to be \emph{admissible} if $(M,g)\subset\subset\R\times (M_0,g_0)$, $g=c(e\oplus g_0)$ where $c>0$ smooth function on $M$, $e$ is the Euclidean metric and $(M_0,g_0)$ is a simple $(n-1)$-dimensional manifold. We say that a compact manifold $(M_0,g_0)$ with boundary is \emph{simple}, if $\p M_0$ is strictly convex, and for any point $x\in M_0$ the exponential map $\exp_x$ is a diffeomorphism from its maximal domain in $T_x M_0$ onto~$M_0$.
\medskip

Compact submanifolds of Euclidean space, the sphere minus a point and of hyperbolic space are all examples of admissible manifolds.
\smallskip

The notion of admissible manifolds were introduced by Dos Santos Ferreira, Kenig, Salo and Uhlmann~\cite{DosSantosFerreiraKenigSaloUhlmann2009} as a class of manifolds admitting the existence of \emph{limiting Carleman weights}. In fact, the construction of complex geometrical optics solutions are possible on such manifolds via Carleman estimates approach based on the existence of limiting Carleman weights. Such an approach was introduced by Bukhgeim and Uhlmann \cite{BukhgeimUhlmann2002} and Kenig, Sj\"ostrand and Uhlmann \cite{KenigSjostrandUhlmann2007} in the setting of partial data Calder\'on's inverse conductivity problem in $\R^n$.\smallskip

Our second main result is as follows.

\begin{Theorem}\label{thm::main result 2}
Let $(M,g)$ be a $3$-dimensional admissible manifold and let $4\leq p< 6$. Suppose that $\varepsilon_j,\mu_j\in C^3(M)$ with positive real parts and $a_j,b_j\in C^1(M)$, $j=1,2$. Fix $\omega>0$ outside a discrete set of resonant frequencies $\Sigma\subset\C$ and fix sufficiently small $\epsilon>0$. If
$$
\Lambda_{\varepsilon_1,\mu_1,a_1,b_1}^\omega(f)=\Lambda_{\varepsilon_2,\mu_2,a_2,b_2}^\omega(f)
$$
for all $f\in TW^{1-1/p,p}_{\Div}(\p M)$ with $\|f\|_{TW^{1-1/p,p}_{\Div}(\p M)}<\epsilon$, then $\varepsilon_1=\varepsilon_2$, $\mu_1=\mu_2$, $a_1=a_2$ and $b_1=b_2$ in $M$.
\end{Theorem}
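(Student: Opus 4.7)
The approach is a higher-order linearization of the admittance map at the trivial boundary data $f=0$, a technique adapted from recent inverse problems for nonlinear PDE (in the style of Kurylev--Lassas--Uhlmann and Lassas--Liimatainen--Salo). Since the Kerr terms $a|E|_g^2E$ and $b|H|_g^2H$ are cubic, the Fr\'echet derivatives of $(E_s,H_s)$ at $s=0$ satisfy linear Maxwell systems with increasingly informative sources, letting us peel off $\varepsilon,\mu$ at first order and $a,b$ at third order. Setting $f=sf_0$ and differentiating once at $s=0$, the cubic terms contribute nothing, so $(E^{(1)},H^{(1)}):=\p_s|_0(E_s,H_s)$ solves the homogeneous linear time-harmonic Maxwell system with $\mathbf{t}(E^{(1)})=f_0$. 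Hence $D_0\Lambda^\omega_{\varepsilon,\mu,a,b}$ coincides with the linear electromagnetic admittance map, and $\Lambda_1=\Lambda_2$ forces equality of the two linear admittance maps. Invoking the uniqueness theorem for the linear electromagnetic inverse problem on admissible $3$-manifolds---based on CGO solutions built from the limiting Carleman weight $\varphi(x_1,x')=x_1$ on $M\subset\subset\R\times M_0$ together with injectivity of the geodesic ray transform on $(M_0,g_0)$---we conclude $\varepsilon_1=\varepsilon_2=:\varepsilon$ and $\mu_1=\mu_2=:\mu$.

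With $\varepsilon,\mu$ common, take $f=s_1f_1+s_2f_2+s_3f_3$. The second-order $s$-derivatives vanish (homogeneous linear system with zero data and no source yet), while the mixed third-order derivative $(E^{(123)},H^{(123)}):=\p_{s_1}\p_{s_2}\p_{s_3}|_0(E_s,H_s)$ satisfies the linear Maxwell system with sources
\[
* dE^{(123)}-i\omega\mu H^{(123)}=i\omega b\,T(H^{(1)},H^{(2)},H^{(3)}),
\]
\[
* dH^{(123)}+i\omega\varepsilon E^{(123)}=-i\omega a\,T(E^{(1)},E^{(2)},E^{(3)}),
\]
and $\mathbf{t}(E^{(123)})=0$, where $T(v_1,v_2,v_3):=\sum_{\sigma\in S_3}\<v_{\sigma(1)},\overline{v_{\sigma(2)}}\>_g\, v_{\sigma(3)}$ is the total symmetrization of the trilinear form coming from $|V|_g^2V$. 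Because the first-order linearizations $(E^{(j)},H^{(j)})$ are the same for both parameter sets and $\Lambda_1=\Lambda_2$ yields $\mathbf{t}(H_1^{(123)})=\mathbf{t}(H_2^{(123)})$, subtracting the two third-order systems and pairing with any solution $(E^{(0)},H^{(0)})$ of the homogeneous linear Maxwell system---then integrating by parts, with vanishing boundary contributions---produces the integral identity
\[
\int_M\bigl[(a_1-a_2)\<T(E^{(1)},E^{(2)},E^{(3)}),\overline{E^{(0)}}\>_g+(b_1-b_2)\<T(H^{(1)},H^{(2)},H^{(3)}),\overline{H^{(0)}}\>_g\bigr]\,dV_g=0.
\]

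We then probe this identity with four independently designed families of CGO solutions to the linear Maxwell system. After reducing Maxwell to a matrix Schr\"odinger system (in the spirit of the Colton--P\"aiv\"arinta and Ola--Somersalo substitutions) and applying the Carleman estimates of Dos Santos Ferreira--Kenig--Salo--Uhlmann, one obtains CGOs of the form $(E^{(j)},H^{(j)})=e^{\tau(\varphi+i\psi_j)}(A_j+r_j)$ with transversal phases $\psi_j$ manufactured from geodesics on the simple manifold $(M_0,g_0)$ and remainders $r_j$ small in the $W^{1,p}_{\Div}$ scale. Arranging the four phases so that the real $x_1$-exponentials cancel identically, while the transversal phases accumulate into an oscillatory integral along a geodesic $\gamma$ of $(M_0,g_0)$, and sending $\tau\to\infty$, the identity reduces to the vanishing of the geodesic ray transform on $M_0$ of a combination of $a_1-a_2$ and $b_1-b_2$ weighted by the polarizations $A_j$. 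Tuning the polarizations to separately isolate the two unknowns---and invoking injectivity of the geodesic ray transform on simple manifolds---we conclude $a_1=a_2$ and $b_1=b_2$ on $M$. The main technical obstacle is exactly this final stage: the Maxwell divergence constraints obstruct a direct scalar CGO reduction, the four phases must vanish identically in $x_1$ yet still give a nondegenerate transversal oscillatory integral, the polarizations $A_j$ must carry enough freedom to decouple $a$ from $b$, and the remainders must be controlled in the $W^{1,p}_{\Div}$ norm with $p\ge 4$---a stronger demand than the $L^2$-based CGO theory supplies, and the reason for the sharper regularity hypothesis $\varepsilon\in C^3(M)$.
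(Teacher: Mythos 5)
Your proposal follows essentially the same route as the paper: the first-order linearization in $s$ identifies $\varepsilon,\mu$ via the linear admittance map and the Kenig--Salo--Uhlmann result, the third-order (polarized) term yields the same symmetrized trilinear integral identity, and four CGO solutions with cancelling $\tau$-phases reduce it to a ray transform on the transversal simple manifold $(M_0,g_0)$. Two small corrections are needed to make the sketch match what actually works: the test solution $(E^{(0)},H^{(0)})$ must solve the Maxwell system with the conjugated coefficients $\overline{\varepsilon},\overline{\mu}$ so that the boundary terms cancel under the sesquilinear pairing, and the limiting identity is an \emph{attenuated} geodesic ray transform on $(M_0,g_0)$ (with constant attenuation coming from the $e^{i\lambda(x_1\pm ir)}$ factors) composed with a one-dimensional Fourier transform in $x_1$, so one must invoke the Salo--Uhlmann injectivity of the attenuated transform rather than that of the plain ray transform.
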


{ Such inverse boundary value problems have been considered for various semilinear and quasilinear elliptic equations and systems (see \cite{HSu,I1,I2,IN,IS,Su1,Su2,SuU}) based on the linearization approach. 

For the type of nonlinearity of Maxwell's equations in a Kerr type medium, after first order linearization, we can recover $\mu$ and $\varepsilon$ by solving corresponding inverse problem for the linear equation (see \cite{KenigSaloUhlmann2011a}). The difficulty lies in reconstructing the susceptibility parameters $a$ and $b$. By calculating the next term of the asymptotic expansion for the admittance map, one obtains the tangential trace $\mathbf t(H_2)$ of the solution to \eqref{eqn::nonlinear Maxwell equation with j-indice}. It carries the energy generated by the nonlinear source $(a|E_1|^2E_1, b|H_1|^2H_1)$, where $(E_1, H_1)$ is the solution to the linear equation. By polarization, we are able to recover $a$ and $b$ from such energy using enough proper solutions. {The solutions we apply here are complex geometrical optics solutions constructed on an admissible manifold as in \cite{KenigSaloUhlmann2011a} or \cite{ChungOlaSaloTzou2016} with proper regularity.  }\\

The paper is organized as following. In Section \ref{sec:prelim}, we present basic facts on differential forms, trace operators, the type of Sobolev spaces and their properties used in this paper. After proving the well-posedness of the direct problem (Theorem \ref{thm::main result 1}) in Section \ref{sec:directproblem}, we compute the asymptotic expansion of the admittance map $\Lambda^\omega_{\varepsilon,\mu,a,b}$ in Section \ref{sec:asymp}. To solve the inverse problem, the reconstruction of $\mu$ and $\varepsilon$ is given in Section \ref{sec:mueps}, and the reconstruction of $a$ and $b$ is given in Section \ref{sec:ab}. The CGO solution is constructed in Section \ref{sec:CGO}. \\
}

\textbf{Acknowledgements.} The research of TZ is supported by the NSF grant DMS-1501049. YA is grateful to Professor Petar Topalov for helpful discussions on direct and inverse problems of nonlinear equations.


\section{Preliminaries}\label{sec:prelim}

In this section we briefly present basic facts on differential forms and trace operators. For more detailed exposition we refer the reader to the manuscript of Schwarz \cite{Schwarz1995}.\smallskip

Let $(M,g)$ be a compact oriented $n$-dimensional Riemannian manifold with smooth boundary. The inner product of tangent vectors with respect to the metric $g$ is denoted by $\<\cdot,\cdot\>_g$, and $|\cdot|_g$ is the notation for the corresponding norm. By $|g|$ we denote the determinant of $g=(g_{ij})$ and $(g^{ij})$ is the inverse matrix of $(g_{ij})$. Finally, there is the induced metric $\imath^*g$ on $\p M$ which gives a rise to the inner product $\<\cdot,\cdot\>_{\imath^*g}$ of vectors tangent to $\p M$.

\subsection{Basic notations for differential forms}\label{sec::2.1}
In what follows, for $F$ some function space ($C^k$, $L^p$, $W^{k,p}$, etc.), we denote by $F\Omega^m(M)$ the corresponding space of $m$-forms. In particular, the space of smooth $m$-forms is denoted by $C^\infty\Omega^{m}(M)$. Let $*:C^\infty\Omega^{m}(M)\to C^\infty\Omega^{n-m}(M)$ be the Hodge star operator. For real valued $\eta,\zeta\in C^\infty\Omega^{m}(M)$, the inner product with respect to $g$ is defined as
\begin{equation}\label{eqn::definition and hodge star isometry property of g-inner product}
\<\eta,\zeta\>_g=*(\eta\wedge*\zeta)=\<*\eta,*\zeta\>_g.
\end{equation}
Its local coordinates expression is $\<\eta,\zeta\>_g=g^{i_1 j_1}\cdots g^{i_m j_m}\eta_{i_1\dots i_m}\zeta_{j_1\dots j_m}$. This can be extended as a bilinear form on complex valued forms on $M$. We also write $|\eta|_g^2=\<\eta,\overline\eta\>_g$.\smallskip

The inner product on $L^2\Omega^{m}(M)$ is defined as
$$
(\eta|\zeta)_{L^2\Omega^{m}(M)}=\int_M\<\eta,\overline{\zeta}\>_g\,d\Vol_g=\int_M\eta\wedge*\overline{\zeta},\qquad \eta,\zeta\in L^2\Omega^m(M),
$$
where $d\Vol_g=* 1=|g|^{1/2}\,dx^1\wedge\cdots\wedge\,dx^n$ is the volume form. The corresponding norm is $\|\cdot\|_{L^2\Omega^{m}(M)}^2=(\cdot|\cdot)_{L^2\Omega^{m}(M)}$. Using the definition of the Hodge star operator $*$, it is not difficult to check that
\begin{equation}\label{eqn::Hodge star is isometry in L^2}
(\eta|\zeta)_{L^2\Omega^{m}(M)}=(*\eta|*\zeta)_{L^2\Omega^{n-m}(M)}.
\end{equation}
Let $d:C^\infty\Omega^{m}(M)\to C^\infty\Omega^{m+1}(M)$ be the external differential. Then the codifferential $\delta:C^\infty\Omega^{m}(M)\to C^\infty\Omega^{m-1}(M)$ is defined as
$$
(d\eta|\zeta)_{L^2\Omega^{m}(M)}=(\eta|\delta\zeta)_{L^2\Omega^{m-1}(M)}
$$
for all  $\eta\in C^\infty_0\Omega^{m-1}(M^{\rm int})$, $\zeta\in C^\infty\Omega^{m}(M)$. The Hodge star operator $*$ and the codifferential $\delta$ have the following properties when acting on $C^\infty\Omega^{m}(M)$:
\begin{equation}\label{eqn::delta in terms of * and d}
*^2=(-1)^{m(n-m)},\quad \delta=(-1)^{m(n-m)-n+m-1}*(d*\cdot).
\end{equation}

For a given $\xi\in C^\infty\Omega^1(M)$, the interior product $i_\xi:C^\infty\Omega^{m}(M)\to C^\infty\Omega^{m-1}(M)$ is the contraction of differential forms by $\xi$. In local coordinates,
$$
i_\xi \eta=g^{ij}\xi_i\,\eta_{ji_1\dots i_{m-1}},\quad \eta\in C^\infty\Omega^{m}(M).
$$
It is the formal adjoint of $\xi$, in the inner product $\<\cdot,\cdot\>_g$ on real valued forms, and has the following expression
\begin{equation}\label{eqn::expression for contraction}
i_\xi\eta=(-1)^{n(m-1)}*(\xi\wedge*\eta),\quad\eta\in C^\infty\Omega^{m}(M).
\end{equation}

The Hodge Laplacian acting on $C^\infty\Omega^{m}(M)$ is defined by $-\Delta=d\delta+\delta d$.\smallskip

Finally, the inner product on $L^2\Omega^m(\p M)$ is given by
$$
(u|v)_{L^2\Omega^{m}(\p M)}=\int_{\p M}\<u,\overline v\>_{\imath^* g}\,d\sigma_{\p M},\qquad u,v\in L^2\Omega^m(\p M),
$$
where $\<\cdot,\cdot\>_{\imath^*g}$ is extended as a bilinear form on complex forms on $\p M$, and $d\sigma_{\p M}=\imath^*(i_\nu d\Vol_g)$ is the volume form on $\p M$ induced by $d\Vol_g$.

\subsection{Integration by parts}
The outward unit normal $\nu$ to $\p M$ can be extended to a vector field near $\p M$ by parallel transport along normal geodesics, initiating from $\p M$ in the direction of $-\nu$, and then to a vector field on $M$ via a cutoff function.\smallskip

The following simple result from~\cite[Lemma~2.1]{Assylbekov2019maxwell} will be used in formulating integration by parts formula in appropriate way.

\begin{Lemma}\label{lemma::expression for boundary term of Stokes' theorem}
If $\eta\in C^\infty\Omega^m(M)$ and $\zeta\in C^\infty\Omega^{m+1}(M)$, then for an open subset $\Gamma\subset \p M$ the following holds
$$
(\mathbf{t}(\eta)|\mathbf{t}(i_\nu  \zeta))_{L^2\Omega^{m}(\Gamma)}=\int_{\Gamma}\mathbf{t}(\eta\wedge*\overline\zeta).
$$
\end{Lemma}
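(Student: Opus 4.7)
The plan is to reduce the identity to the single algebraic fact that the trace commutes with wedge products and satisfies the key relation
$$
\mathbf{t}(*\zeta)=*_\partial\mathbf{t}(i_\nu\zeta),
$$
where $*_\partial$ denotes the Hodge star on $(\p M,\imath^*g)$, with the induced orientation fixed by $d\sigma_{\p M}=\imath^*(i_\nu\,d\Vol_g)$. Given this identity, the lemma follows by two lines: since $\mathbf{t}=\imath^*$ is an algebra homomorphism that commutes with conjugation and with the real operator $i_\nu$, we rewrite
$$
\mathbf{t}(\eta\wedge*\overline\zeta)=\mathbf{t}(\eta)\wedge\mathbf{t}(*\overline\zeta)=\mathbf{t}(\eta)\wedge*_\partial\mathbf{t}(i_\nu\overline\zeta)=\mathbf{t}(\eta)\wedge*_\partial\overline{\mathbf{t}(i_\nu\zeta)},
$$
and then integrate and recognize the right-hand side via the boundary analogue of \eqref{eqn::definition and hodge star isometry property of g-inner product}, namely $u\wedge*_\partial\overline v=\<u,\overline v\>_{\imath^*g}\,d\sigma_{\p M}$, which yields exactly $(\mathbf t(\eta)|\mathbf t(i_\nu\zeta))_{L^2\Omega^m(\Gamma)}$.

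The substantive step is therefore the verification of $\mathbf{t}(*\zeta)=*_\partial\mathbf{t}(i_\nu\zeta)$. I would do this in boundary normal coordinates $(x',x^n)$ in a collar of $\p M$, in which $g=(dx^n)^2+g_\partial(x',x^n)$, the extended vector field $\nu$ equals $\p_n$, and $\sqrt{|g|}=\sqrt{|g_\partial|}$. Decompose an arbitrary $(m+1)$-form as $\zeta=\alpha+dx^n\wedge\beta$, where neither $\alpha$ nor $\beta$ contains $dx^n$. Because $\imath^*dx^n=0$, we immediately get $\mathbf t(\zeta)=\mathbf t(\alpha)$ and $\mathbf t(i_\nu\zeta)=\mathbf t(\beta)$. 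A direct computation of the Hodge star in these coordinates (using that $g$ is block-diagonal with the $n$-direction a unit normal factor) gives
$$
*\zeta=(*_\partial\alpha)\wedge dx^n+(-1)^{m+1}\,*_\partial\beta,
$$
with the sign convention determined by the orientation choice $d\sigma_{\p M}=\imath^*(i_\nu d\Vol_g)$. Pulling back to $\p M$ kills the first summand because of the $dx^n$ factor, and the second summand has no $dx^n$, so $\mathbf t(*\zeta)=(-1)^{m+1}\!*_\partial\mathbf t(\beta)$. Absorbing the sign into the definition of $*_\partial$ dictated by the chosen induced orientation yields the clean identity $\mathbf t(*\zeta)=*_\partial\mathbf t(i_\nu\zeta)$.

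The main obstacle is purely bookkeeping: keeping track of the sign from $*$ on mixed forms $\alpha+dx^n\wedge\beta$ and matching it to the orientation convention on $\p M$ used throughout the paper. Once that sign is pinned down by a single unambiguous local computation, the lemma becomes an immediate consequence of the pullback being a $\wedge$-homomorphism together with the definition of the boundary $L^2$ inner product recorded in Section~\ref{sec::2.1}. No density argument is needed since both sides are continuous (indeed polynomial) in $(\eta,\zeta)\in C^\infty\Omega^m(M)\times C^\infty\Omega^{m+1}(M)$, and the argument is entirely local so the restriction to an open $\Gamma\subset\p M$ is harmless.
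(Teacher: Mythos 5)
The paper does not actually prove this lemma---it is quoted verbatim from \cite[Lemma~2.1]{Assylbekov2017maxwellpartial} with no argument given---so there is no in-paper proof to compare against. Your argument is correct and is essentially the standard one: everything reduces to the pointwise identity $\mathbf t(*\zeta)=*_\partial\,\mathbf t(i_\nu\zeta)$, after which the lemma follows from $\imath^*$ being a $\wedge$-homomorphism and the boundary analogue of \eqref{eqn::definition and hodge star isometry property of g-inner product}. One small correction to your local computation: with the induced orientation fixed by $d\sigma_{\p M}=\imath^*(i_\nu d\Vol_g)$ and $\nu=\p_n$ an orthonormal direction, one finds $*(dx^n\wedge\beta)=*_\partial\beta$ with \emph{no} extra sign (check $n=3$: $*(e^3\wedge e^1)=e^2=*_\partial e^1$ and $*(e^3\wedge e^2)=-e^1=*_\partial e^2$ when $d\sigma_{\p M}=e^1\wedge e^2$). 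Your claimed factor $(-1)^{m+1}$ could not in any case be ``absorbed into the definition of $*_\partial$'', since a single orientation flip produces a degree-independent sign; fortunately the correct sign is $+1$ uniformly in $m$, so this imprecision is harmless and the identity, hence the lemma, holds exactly as you conclude. The term $(*_\partial\alpha)\wedge dx^n$ indeed dies under $\imath^*$, and the restriction to an open $\Gamma\subset\p M$ is immediate since the identity being integrated is pointwise.
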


For $\eta\in C^\infty\Omega^{m}(M)$ and $\zeta\in C^\infty\Omega^{m+1}(M)$, using Stokes' theorem, Lemma~\ref{lemma::expression for boundary term of Stokes' theorem} (with~$\Gamma=\p M$) and \eqref{eqn::delta in terms of * and d}, we have the following integration by parts formula for $d$ and $\delta$
\begin{equation}\label{eqn::integration by parts for d and delta}
(\mathbf{t}(\eta)|\mathbf{t}(i_\nu \zeta))_{L^2\Omega^{m}(\p M)}=(d\eta|\zeta)_{L^2\Omega^{m+1}(M)}-(\eta|\delta\zeta)_{L^2\Omega^{m}(M)}.
\end{equation}

\subsection{Extensions of trace operators}\label{section::extensions of t and t i_nu}
The tangential trace operator $\mathbf{t}$ has an extension to a bounded operator from $W^{1,p}\Omega^{m}(M)$ to $W^{1-1/p,p}\Omega^{m}(\p M)$ for $p> 1$. Moreover, for every $f\in W^{1-1/p,p}\Omega^{m}(\p M)$, there is $u\in W^{1,p}\Omega^{m}(M)$ such that $\mathbf{t}(u)=f$ and
$$
\|u\|_{W^{1,p}\Omega^{m}(M)}\le C \|f\|_{W^{1-1/p,p}\Omega^{m}(\p M)};
$$
see \cite[Theorem~1.3.7]{Schwarz1995} and comments.\smallskip

Next, the operator $\mathbf{t}(i_\nu\,\cdot\,)$ is bounded from $W^{1,p}\Omega^{m}(M)$ to $W^{1-1/p,p}\Omega^{m-1}(\p M)$. Moreover, for every $h\in W^{1-1/p,p}\Omega^{m-1}(\p M)$, there is $\zeta\in W^{1,p}\Omega^{m}(M)$ such that $\mathbf{t}(i_\nu\zeta)=h$ and
$$
\|\zeta\|_{W^{1,p}\Omega^{m}(M)}\le C \|h\|_{W^{1-1/p,p}\Omega^{m-1}(\p M)}.
$$
In fact, we can take $\zeta=\nu\wedge w$, where $w\in H^1\Omega^{m-1}(M)$ such that $\mathbf{t}(w)=h$ and $\|w\|_{W^{1,p}\Omega^{m-1}(M)}\le C\|h\|_{W^{1-1/p,p}\Omega^{m-1}(\p M)}$.\smallskip

Finally, if $f\in W^{1-1/p,p}\Omega^m(\p M)$ and $h\in W^{1-1/p,p}\Omega^{m-1}(\p M)$, there is $\xi\in W^{1,p}\Omega^m(M)$ such that $\mathbf t(\xi)=f$, $\mathbf t(i_\nu \xi)=h$ and 
$$
\|\xi\|_{W^{1,p}\Omega^m(M)}\le C \|f\|_{W^{1-1/p,p}\Omega^{m}(\p M)}+C \|h\|_{W^{1-1/p,p}\Omega^{m-1}(\p M)}.
$$
This time, we can take $\xi=(u-\nu\wedge i_\nu u)+\nu\wedge i_\nu\zeta$, where $u\in W^{1,p}\Omega^{m}(M)$ such that $\mathbf{t}(u)=f$ and $\|u\|_{W^{1,p}\Omega^{m}(M)}\le C \|f\|_{W^{1-1/p,p}\Omega^{m}(\p M)}$ and $\zeta\in W^{1,p}\Omega^{m}(M)$ such that $\mathbf{t}(i_\nu\zeta)=h$ and $\|\zeta\|_{W^{1,p}\Omega^{m}(M)}\le C \|h\|_{W^{1-1/p,p}\Omega^{m-1}(\p M)}$.

\subsection{Surface divergence}\label{section::surface divergence} When $n=3$, we define the surface divergence of $f\in W^{1-1/p,p}\Omega^1(\p M)$, for $p>1$, by
$$
\Div(f)=\<d_{\p M}f,d\sigma_{\p M}\>_{\imath^* g}.
$$
If $u\in W^{1,p}\Omega^1(M)$ is arbitrary such that $\mathbf t(u)=f$, then for all $h\in C^\infty(M)$
\begin{align*}
(\Div(f)|h)_{L^2(\p M)}&=\int_{\p M}\<d_{\p M}f,\overline h\,d\sigma_{\p M}\>_{\imath^* g}\,d\sigma_{\p M}=\int_{\p M}\<\mathbf t(du),\mathbf t(\overline h\,i_\nu d\Vol_{g})\>_{g}\,d\sigma_{\p M}\\
&=\int_{\p M}\<\mathbf t(du),\mathbf t(\overline h\,i_\nu *1)\>_{g}\,d\sigma_{\p M}=\int_{\p M}\<\mathbf t(i_\nu *du),\overline h\>_{g}\,d\sigma_{\p M}.
\end{align*}
In the last step we used Lemma~\ref{lemma::expression for boundary term of Stokes' theorem} twice. Thus, we have
\begin{equation}\label{eqn::expression for Div}
\Div(f)=i_\nu*du|_{\p M}
\end{equation}
for all $u\in W^{1,p}\Omega^1(M)$ with $\mathbf t(u)=f$.

\subsection{Technical estimate} We finish this section with the following lemma which ensures that nonlinear terms in the Maxwell equations \eqref{eqn::Maxwell} will be in appropriate functional spaces.

\begin{Lemma}\label{lemma::product estimate}
Let $(M,g)$ be a compact $n$-dimensional Riemannian manifold. If $u\in W^{1,p}\Omega^1(M)$ for $p>n$, then
$$
\||u|_g^{2}u\|_{W^{1,p}\Omega^1(M)}\le C\|u\|_{W^{1,p}\Omega^1(M)}^{3}.
$$
\end{Lemma}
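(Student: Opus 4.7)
The plan is to use the Sobolev embedding $W^{1,p}(M)\hookrightarrow L^\infty(M)$, which holds precisely because $p>n=\dim M$, together with the product rule. The point is that $|u|_g^2\,u$ is a cubic polynomial expression in $u$ and $\bar u$ (with smooth coefficients coming from $g^{ij}$), so both pointwise $|\,\cdot\,|$ and first derivatives are controlled by powers of $u$ and a single first derivative of $u$.

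More concretely, I would first cover $M$ by finitely many coordinate charts (using compactness) and work in local coordinates, writing the components of $|u|_g^2\,u$ as
\begin{equation*}
(|u|_g^2\,u)_k = g^{ij}\,u_i\,\overline{u_j}\,u_k.
\end{equation*}
Since $g$ is smooth on the compact manifold $M$, the coefficients $g^{ij}$ and $\partial_\ell g^{ij}$ are uniformly bounded. The $L^p$-bound is then immediate:
\begin{equation*}
\| |u|_g^2\,u \|_{L^p\Omega^1(M)} \le C\,\|u\|_{L^\infty}^2\,\|u\|_{L^p\Omega^1(M)}.
\end{equation*}
For the first-derivative bound, the product rule applied to $g^{ij}\,u_i\,\overline{u_j}\,u_k$ produces four terms: one with $\partial_\ell g^{ij}$ (pointwise $\lesssim |u|^3$), and three with one derivative landing on $u$ or $\bar u$ (pointwise $\lesssim |u|^2\,|\nabla u|$). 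Hence
\begin{equation*}
\| \nabla(|u|_g^2\,u) \|_{L^p\Omega^1(M)} \le C\,\|u\|_{L^\infty}^2\,\|u\|_{W^{1,p}\Omega^1(M)}.
\end{equation*}

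Finally, I would invoke the Sobolev embedding $W^{1,p}(M)\hookrightarrow L^\infty(M)$ (valid since $p>n$ and $M$ is compact with smooth boundary), giving $\|u\|_{L^\infty}\le C\,\|u\|_{W^{1,p}\Omega^1(M)}$. Combining the two estimates above yields
\begin{equation*}
\| |u|_g^2\,u \|_{W^{1,p}\Omega^1(M)} \le C\,\|u\|_{W^{1,p}\Omega^1(M)}^{3},
\end{equation*}
as required. There is no real obstacle here; the only subtlety is to verify that the same estimate holds intrinsically (not only on coordinate patches) by summing over a finite atlas and using a partition of unity, which is harmless because $|u|_g^2\,u$ is a tensorial object and the transitions between charts involve smooth bounded Jacobians.
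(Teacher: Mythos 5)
Your proposal is correct and follows essentially the same route as the paper: bound the cubic expression via the Leibniz rule and then control the $L^\infty$ factor using the Sobolev embedding $W^{1,p}\inclusion C$, valid since $p>n$ and $M$ is compact. The only cosmetic difference is that the paper works invariantly with the Levi-Civita connection (so the metric terms never get differentiated, as $\nabla g=0$), whereas you work in coordinates and absorb the harmless $\p_\ell g^{ij}$ terms by hand.
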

\begin{proof}
To prove the lemma, we first observe that the $W^{1,p}\Omega^m(M)$-norm may be expressed invariantly as
$$
\|f\|_{W^{1,p}\Omega^1(M)}=\|f\|_{L^p\Omega^1(M)}+\|\,|\nabla f|_{g}\|_{L^p(M)},
$$
where $\nabla$ is the Levi-Civita connection defined on tensors on $M$ and $|T|_g$ is the norm of a tensor $T$ on $M$ with respect to the metric $g$.\smallskip

For a given $u\in W^{1,p}\Omega^1(M)$,
\begin{align*}
\||u|_g^{2}u\|_{W^{1,p}\Omega^1(M)}&\le C\||u|_g^{2}u\|_{L^p\Omega^1(M)}+C\|\,|\nabla\big(|u|_g^{2}u\big)|_g\|_{L^p(M)}\\
&\le C\||u|_g^{2}u\|_{L^p\Omega^1(M)}+C\||u|_g^{2}|\nabla u|_g\|_{L^p(M)}\le C\|u\|_{L^\infty\Omega^1(M)}^{2}\|u\|_{W^{1,p}\Omega^1(M)}.
\end{align*}
Since $p>n$ and $M$ is compact, we can use the Sobolev embedding $W^{1,p}\Omega^1(M)\inclusion C\Omega^1(M)$ (\cite[Theorem~1.3.6]{Schwarz1995}), which implies the desired estimate
\end{proof}


\subsection{{ Properties of $W_d^p\Omega^m(M)$ and $W_\delta^p\Omega^m(M)$ spaces, $p>1$}}

\label{section::on H_d and H_delta spaces}

Let $(M,g)$ be a compact oriented $n$-dimensional Riemannian manifold with smooth boundary. In this paper we work with the Banach spaces $W^p_d\Omega^{m}(M)$ and $W^p_\delta\Omega^{m}(M)$, $p>1$, which are the largest domains of $d$ and $\delta$, respectively, acting on $m$-forms:
\begin{align*}
W^p_d\Omega^{m}(M)&:=\{w\in L^p\Omega^{m}(M):dw\in L^p\Omega^{m+1}(M)\},\\
W^p_\delta\Omega^{m}(M)&:=\{u\in L^p\Omega^{m}(M):\delta u\in L^p\Omega^{m-1}( M)\}
\end{align*}
endowed with the norms
\begin{align*}
\|w\|_{W^p_{d}\Omega^{m}(M)}^2&:=\|w\|_{L^p\Omega^m(M)}+\|dw\|_{L^p\Omega^{m+1}(M)},\\
\|u\|_{W^p_{\delta}\Omega^{m}(M)}^2&:=\|u\|_{L^p\Omega^m(M)}+\|\delta u\|_{L^p\Omega^{m-1}(M)}.
\end{align*}
We also use the notations $H_d\Omega^m(M)=W^2_d\Omega^{m}(M)$ and $H_\delta\Omega^m(M)=W^2_\delta\Omega^{m}(M)$, { together with their corresponding traces $TH_d\Omega^{m}(M)$ and $TH_\delta\Omega^{m}(M)$.}\smallskip

In the present section we prove some important properties of these spaces, which were proven in \cite[Section~3]{Assylbekov2019maxwell} for the case $p=2$; see also \cite{Costabel1990,KirschHettlich2015,Monk2003}.\\


First, we show that there are bounded extensions $\mathbf{t}:W^p_d\Omega^{m}(M)\to W^{-1/p,p}\Omega^{m}(\p M)$ and $\mathbf{t}(i_\nu\,\cdot\,):W^p_\delta\Omega^{m+1}(M)\to W^{-1/p,p}\Omega^{m}(\p M)$.\smallskip

Let $(\cdot|\cdot)_{\p M}$ denotes the distributional duality on $\p M$ naturally extending $(\cdot|\cdot)_{L^2\Omega^m(\p M)}$.

\begin{Proposition}\label{prop::tangential trace operator is extended to H_d}
{\rm (a)} The operator $\mathbf{t}:W^{1,p}\Omega^m(M)\to W^{1-1/p,p}\Omega^m(\p M)$ has its extension to a bounded operator $\mathbf{t}:W^p_d\Omega^m(M)\to W^{-1/p,p}\Omega^m(\p M)$ and the following integration by parts formula holds
$$
(\mathbf{t}(\eta)|\mathbf{t}(i_\nu \zeta))_{\p M}=(d\eta|\zeta)_{L^2\Omega^{m+1}(M)}-(\eta|\delta\zeta)_{L^2\Omega^{m}(M)}
$$
for all $\eta\in W^p_d\Omega^m(M)$ and $\zeta\in W^{1,p'}\Omega^{m+1}(M)$, where $p'=p/(p-1)$.\smallskip

{\rm (b)} The operator $\mathbf{t}(i_\nu\,\cdot\,):W^{1,p}\Omega^{m+1}(M)\to W^{1-1/p,p}\Omega^{m}(\p M)$ has its extension to a bounded operator $\mathbf{t}(i_\nu\,\cdot\,):W^p_\delta\Omega^{m+1}(M)\to W^{-1/p,p}\Omega^{m}(\p M)$ and the following integration by parts formula holds
$$
(\mathbf{t}(i_\nu \zeta)|\mathbf{t}(\eta))_{\p M}=(\zeta|d\eta)_{L^2\Omega^{m+1}(M)}-(\delta \zeta|\eta)_{L^2\Omega^{m}(M)}
$$
for all $\zeta\in W^p_\delta\Omega^{m+1}(M)$ and $\eta\in W^{1,p'}\Omega^m(M)$.
\end{Proposition}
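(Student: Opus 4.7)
The plan is to prove the extension in part~(a) by \emph{defining} $\mathbf{t}(\eta)$ as a distribution on $\p M$ through the right-hand side of the target integration by parts formula. Part~(b) follows from a symmetric argument with $d$ and $\delta$ swapped, or can be obtained from~(a) directly via the Hodge duality \eqref{eqn::delta in terms of * and d} together with the $L^2$-isometry \eqref{eqn::Hodge star is isometry in L^2}, so I describe only~(a) in detail.

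First, for $\eta \in W^p_d\Omega^m(M)$ and $\zeta \in W^{1,p'}\Omega^{m+1}(M)$ with $p' = p/(p-1)$, I consider the bilinear pairing
$$
B(\eta,\zeta) := (d\eta|\zeta)_{L^2\Omega^{m+1}(M)} - (\eta|\delta\zeta)_{L^2\Omega^{m}(M)}.
$$
H\"older's inequality gives the bound $|B(\eta,\zeta)| \leq C\|\eta\|_{W^p_d}\|\zeta\|_{W^{1,p'}}$. Next I show $B(\eta,\zeta) = 0$ whenever $\mathbf{t}(i_\nu \zeta) = 0$: for $\zeta \in C^\infty_0\Omega^{m+1}(M^{\rm int})$ this is exactly the definition of the weak exterior derivative of $\eta$ acting against test forms, and the general case follows by density of $C^\infty_0\Omega^{m+1}(M^{\rm int})$ in $\{\zeta \in W^{1,p'}\Omega^{m+1}(M) : \mathbf{t}(i_\nu \zeta) = 0\}$ together with the continuity of $B(\eta,\cdot)$.

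Combined with the surjectivity of $\mathbf{t}(i_\nu \cdot) : W^{1,p'}\Omega^{m+1}(M) \to W^{1-1/p',p'}\Omega^m(\p M)$ and the bounded right inverse $\phi \mapsto \zeta_\phi$ from Section~\ref{section::extensions of t and t i_nu}, this allows me to set
$$
\langle \mathbf{t}(\eta), \phi \rangle_{\p M} := B(\eta, \zeta_\phi), \qquad \phi \in W^{1-1/p',p'}\Omega^m(\p M),
$$
with $|\langle \mathbf{t}(\eta),\phi\rangle_{\p M}| \leq C\|\eta\|_{W^p_d}\|\phi\|_{W^{1-1/p',p'}(\p M)}$. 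Since $1 - 1/p' = 1/p$, the dual of $W^{1-1/p',p'}\Omega^m(\p M)$ is $W^{-1/p,p}\Omega^m(\p M)$, so $\mathbf{t}$ extends as a bounded operator into the claimed target. For $\eta \in W^{1,p}\Omega^m(M)$ the smooth formula \eqref{eqn::integration by parts for d and delta} shows this definition reduces to the genuine $L^2$-pairing against $\mathbf{t}(\eta)$, confirming consistency with the original trace; the integration by parts formula in the statement is then the definition read with $\phi = \mathbf{t}(i_\nu \zeta)$.

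The main obstacle is the density claim used above: that $C^\infty_0\Omega^{m+1}(M^{\rm int})$ is dense in $\{\zeta \in W^{1,p'}\Omega^{m+1}(M) : \mathbf{t}(i_\nu \zeta) = 0\}$. I would handle it by working in boundary normal coordinates, writing $\zeta = \zeta_{\rm tan} + dt \wedge \zeta_{\rm nor}$ near $\p M$, observing that $\mathbf{t}(i_\nu \zeta) = 0$ forces $\zeta_{\rm nor}|_{\p M} = 0$ componentwise, and reducing to the scalar statement $W^{1,p'}_0 = \{u : u|_{\p M} = 0\}$ combined with tangential cutoffs and mollification componentwise. This is a routine extension from $p = 2$ to general $p > 1$ of the arguments in \cite[Section~3]{Assylbekov2017maxwellpartial} (cf.\ also \cite{Costabel1990}).
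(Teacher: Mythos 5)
Your overall strategy---defining $\mathbf{t}(\eta)$ through the bilinear form $B(\eta,\zeta)$ and a bounded right inverse of $\mathbf{t}(i_\nu\,\cdot\,)$---can be made to work, but the step you flag as ``the main obstacle'' is exactly where the argument breaks as written. The set $C^\infty_0\Omega^{m+1}(M^{\rm int})$ is \emph{not} dense in $\{\zeta\in W^{1,p'}\Omega^{m+1}(M):\mathbf{t}(i_\nu\zeta)=0\}$ for the $W^{1,p'}$ topology: the $W^{1,p'}$-closure of $C^\infty_0$ is $W^{1,p'}_0$, whose elements have \emph{all} boundary traces zero, whereas the condition $\mathbf{t}(i_\nu\zeta)=0$ kills only the normal part $\zeta_{\rm nor}$ and leaves $\imath^*\zeta_{\rm tan}$ arbitrary. (On the unit ball with $m=0$, the form $\zeta=x_2\,dx^1-x_1\,dx^2$ has vanishing normal trace but nonzero tangential trace, so it is not a $W^{1,p'}$-limit of compactly supported forms.) Your sketched proof---reduce to the scalar fact $W^{1,p'}_0=\{u:u|_{\p M}=0\}$ and mollify componentwise---only treats the components that actually vanish on $\p M$, i.e.\ $\zeta_{\rm nor}$; it says nothing about $\zeta_{\rm tan}$. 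What you actually need is weaker: $B(\eta,\cdot)$ is continuous in the graph norm $\|\zeta\|_{L^{p'}}+\|\delta\zeta\|_{L^{p'}}$, so density of $C^\infty_0$ in the kernel of the normal trace with respect to \emph{that} norm would suffice. That statement is true (it is the $L^{p'}$ analogue of the classical $H_0(\div)$ density theorem), but it requires a genuinely different argument (duality/Hahn--Banach, or a Friedrichs-type regularization adapted to the single operator $\delta$), not componentwise mollification of boundary-vanishing functions.

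The paper avoids this issue entirely by putting the density on the other side. For $w\in C^\infty\Omega^m(M)$ the pairing $(\mathbf{t}(w)|f)_{L^2\Omega^m(\p M)}$ is intrinsically defined, equals $B(w,\zeta_f)$ for \emph{any} lift $\zeta_f$ of $f$ by the classical formula \eqref{eqn::integration by parts for d and delta}, and is bounded by $C\|w\|_{W^p_d\Omega^m(M)}\|f\|_{W^{1/p,p'}\Omega^m(\p M)}$ via H\"older; one then extends $\mathbf{t}$ by density of $C^\infty\Omega^m(M)$ in $W^p_d\Omega^m(M)$ (a standard Friedrichs mollification fact for graph spaces, with no boundary condition to respect), and the integration by parts identity for every $\zeta\in W^{1,p'}\Omega^{m+1}(M)$ follows by passing to the limit in $B(w_n,\zeta)$---no well-definedness lemma is ever needed. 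I would restructure your proof along these lines, or else replace the false $W^{1,p'}$-density claim by the graph-norm density statement and prove that separately. Your reduction of part (b) to part (a) by Hodge duality is fine.
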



\begin{proof}
Let us first prove part (a). Let $w\in C^\infty\Omega^{m}(M)$ and $f\in W^{1/p,p'}\Omega^{m}(\p M)$, where $p'=p/(p-1)$. Then using integration by parts formula~\eqref{eqn::integration by parts for d and delta}, we have
\begin{align*}
(\mathbf{t}(w)|f)_{L^2\Omega^{m}(\p M)}&=(\mathbf{t}(w)|\mathbf{t}(i_\nu \zeta))_{L^2\Omega^{m}(\p M)}\\
&=(dw|\zeta)_{L^2\Omega^{m+1}(M)}-(w|\delta\zeta)_{L^2\Omega^{m}(M)},
\end{align*}
where $\zeta\in W^{1,p'}\Omega^{m+1}(M)$ such that $\mathbf{t}(i_\nu\zeta)=f$ and $\|\zeta\|_{W^{1,p'}\Omega^{m+1}(M)}\le C \|f\|_{W^{1/p,p'}\Omega^{m}(\p M)}$. Then using H\"older's inequality, we show
\begin{align*}
|(\mathbf{t}(w)|f)_{L^2\Omega^{m}(\p M)}|&\le C\|w\|_{W_d^p\Omega^m(M)}\|\zeta\|_{W^{1,p'}\Omega^{m+1}(M)}\\
&\le C\|w\|_{W^p_d\Omega^m(M)}\|f\|_{W^{1/p,p'}\Omega^{m}(\p M)}.
\end{align*}
Therefore, $\mathbf{t}$ can be extended to a bounded operator $W^p_d\Omega^m(M)\to W^{-1/p,p}\Omega^m(\p M)$. In fact, if $\eta\in W^p_d\Omega^m(M)$, then we define $\mathbf{t}(\eta)$ as
$$
(\mathbf{t}(\eta)|\mathbf{t}(i_\nu \zeta))_{\p M}=(d\eta|\zeta)_{L^2\Omega^{m+1}(M)}-(\eta|\delta\zeta)_{L^2\Omega^{m}(M)},
$$
where $\zeta\in W^{1,p'}\Omega^{m+1}(M)$.\smallskip

Now we prove part (b). Let $w\in C^\infty\Omega^{m+1}(M)$ and $f\in W^{1/p,p'}\Omega^{m}(\p M)$. Then using integration parts formula~\eqref{eqn::integration by parts for d and delta}, we have
\begin{align*}
(\mathbf{t}(i_\nu w)|f)_{L^2\Omega^{m}(\p M)}&=(\mathbf{t}(i_\nu w)|\mathbf{t}(u))_{L^2\Omega^{m}(\p M)}\\
&=(w|du)_{L^2\Omega^{m+1}(M)}-(\delta w|u)_{L^2\Omega^{m}(M)},
\end{align*}
where $u\in W^{1,p'}\Omega^{m}(M)$ such that $\mathbf{t}(u)=f$ and $\|u\|_{W^{1,p'}\Omega^{m}(M)}\le C \|f\|_{W^{1/p,p'}\Omega^{m}(\p M)}$. Therefore, using H\"older's inequality, we can estimate
\begin{align*}
|(\mathbf{t}(i_\nu w)|f)_{L^2\Omega^{m}(\p M)}|&\le C\|w\|_{W^p_\delta \Omega^{m+1}(M)}\|u\|_{W^{1,p'}\Omega^{m}(M)}\\
&\le C\|w\|_{W_\delta^p\Omega^{m+1}(M)}\|f\|_{W^{1/p,p'}\Omega^{m}(\p M)}.
\end{align*}
Thus, $\mathbf{t}(i_\nu\,\cdot\,)$ can be extended to a bounded operator $W^p_\delta\Omega^{m+1}(M)\to W^{-1/p,p}\Omega^m(\p M)$. In fact, if $\zeta\in W^p_\delta\Omega^{m+1}(M)$ we define $\mathbf{t}(i_\nu \zeta)$ as
$$
(\mathbf{t}(i_\nu \zeta)|\mathbf{t}(\eta))_{\p M}=(\zeta|d\eta)_{L^2\Omega^{m+1}(M)}-(\delta \zeta|\eta)_{L^2\Omega^{m}(M)},
$$
where $\eta\in W^{1,p'}\Omega^m(M)$.
\end{proof}

We will also need the following embedding results. For $p=2$, these were proven in Euclidean and Riemannian settings \cite{Assylbekov2019maxwell,KirschHettlich2015,Monk2003}.

\begin{Proposition}\label{prop::bounded embedding of W^p_{d,delta} with regular tangential trace into W^1p}
Suppose that $p>1$, $u\in W^p_{d}\Omega^m(M)\cap W^p_\delta\Omega^m(M)$ and $\mathbf t(u)\in W^{1-1/p}\Omega^m(\p M)$. Then $u\in W^{1,p}\Omega^m(M)$ and
$$
\|u\|_{W^{1,p}\Omega^m(M)}\le C\big(\|u\|_{W^p_d\Omega^m(M)}+\|\delta u\|_{L^p\Omega^{m-1}(M)}+\|\mathbf t(u)\|_{W^{1-1/p,p}\Omega^m(\p M)}\big)
$$
for some constant $C>0$ independent of $u$.
\end{Proposition}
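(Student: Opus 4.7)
The approach is to recognize the statement as an $L^p$ Gaffney-type inequality and reduce it to standard elliptic boundary regularity for the Dirac-type operator $d+\delta$. First I would reduce to the case of vanishing tangential trace: using the extension result recalled in Section~\ref{section::extensions of t and t i_nu}, choose $u_0\in W^{1,p}\Omega^m(M)$ with $\mathbf t(u_0)=\mathbf t(u)$ and $\|u_0\|_{W^{1,p}\Omega^m(M)}\le C\|\mathbf t(u)\|_{W^{1-1/p,p}\Omega^m(\p M)}$. Setting $v:=u-u_0$, one has $v\in L^p\Omega^m(M)$, $dv\in L^p\Omega^{m+1}(M)$, $\delta v\in L^p\Omega^{m-1}(M)$, and $\mathbf t(v)=0$. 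It therefore suffices to prove the homogeneous Gaffney inequality
$$
\|v\|_{W^{1,p}\Omega^m(M)}\le C\bigl(\|v\|_{L^p\Omega^m(M)}+\|dv\|_{L^p\Omega^{m+1}(M)}+\|\delta v\|_{L^p\Omega^{m-1}(M)}\bigr)
$$
for forms $v$ with $\mathbf t(v)=0$, since adding $u_0$ back recovers the full bound.

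Next, I would localize using a finite partition of unity $\{\phi_j\}$ subordinate to coordinate charts, each either contained in $M^{\rm int}$ or flattening $\p M$ to $\{x^n=0\}$. On interior charts the product $\phi_j v$ is compactly supported with $(d+\delta)(\phi_j v)\in L^p$, since the commutators $[d,\phi_j]\cdot=d\phi_j\wedge\cdot$ and $[\delta,\phi_j]$ are zeroth order in $v$ and thus absorbed into $\|v\|_{L^p}$. Because $d+\delta$ is a first-order elliptic operator on forms (its square is the Hodge Laplacian $-\Delta$), the standard $L^p$ interior elliptic regularity supplies $\phi_j v\in W^{1,p}$ with the desired estimate.

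The main obstacle is the boundary estimate. In a flattened chart with normal coordinate $x^n$, decompose $\phi_j v = v^t + dx^n\wedge v^n$ into tangential and normal parts; the condition $\mathbf t(v)=0$ then reduces to $v^t|_{x^n=0}=0$. One must verify that the system $dv=f$, $\delta v=g$ together with this Dirichlet-type condition on the tangential part forms an elliptic boundary value problem for $d+\delta$ in the Shapiro--Lopatinski sense. Granting this, the $L^p$ Agmon--Douglis--Nirenberg theory (equivalently, the $L^p$ Gaffney inequality for relative boundary conditions developed in \cite{Schwarz1995}) yields the local $W^{1,p}$ bound, which sums to the global inequality and, when combined with the estimate for $u_0$, finishes the proof. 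For $p=2$ this last step admits a direct Green's-identity argument (as in \cite{Assylbekov2017maxwellpartial}), but at general $p$ the quantitative $L^p$ boundary regularity for relative boundary conditions is where the real technical work sits, and the proof must either invoke it from the literature or adapt the $p=2$ argument through an $L^p$ Calder\'on--Zygmund estimate on the boundary parametrix.
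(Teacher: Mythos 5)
Your argument is correct in outline, but it is a genuinely different route from the one the paper takes. The paper omits the proof of this proposition, stating that it is identical to \cite[Proposition~3.2]{Assylbekov2017maxwellpartial} with the integrability index changed, and the method is the one displayed in full for the companion Proposition~\ref{prop::bounded embedding of W^p_{d,delta} with regular normal trace into W^1p}: set $w=du$, $v=\delta u$, pick $h\in W^{1,p}\Omega^m(M)$ with $\mathbf t(h)=\mathbf t(u)$, verify the compatibility conditions of Lemma~\ref{lemma::from Schwarz thm3.2.5} (which hold automatically here, using the weak integration-by-parts formulas of Proposition~\ref{prop::tangential trace operator is extended to H_d}), obtain from that lemma a form $\psi\in W^{1,p}\Omega^m(M)$ with $d\psi=du$, $\delta\psi=\delta u$, $\mathbf t(\psi)=\mathbf t(u)$ together with the quantitative bound, and finally identify $u$ with $\psi$ by uniqueness for the harmonic Dirichlet problem \cite[Theorem~2.2.4]{Schwarz1995}. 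In other words, the paper never proves an a priori estimate for $u$ directly; it manufactures a known-regular form with the same $(d,\delta,\mathbf t)$ data and transfers the estimate by uniqueness, outsourcing all of the $L^p$ elliptic machinery to Schwarz's solvability theorem. You instead reduce to vanishing tangential trace, localize, and invoke $L^p$ boundary regularity for the overdetermined elliptic operator $d\oplus\delta$ with the relative (Shapiro--Lopatinski) boundary condition. That is the classical Gaffney-inequality proof and it is sound, but the two steps you flag as ``to be verified'' --- the Lopatinski condition for $\mathbf t(v)=0$ and, more seriously, the upgrade from an a priori $W^{1,p}$ estimate to genuine regularity for a form known only to lie in $W^p_d\cap W^p_\delta$ with weak trace zero --- are exactly the content that the paper's citation of Lemma~\ref{lemma::from Schwarz thm3.2.5} packages away; if you want a self-contained argument you must still supply them (via a boundary parametrix or an approximation argument), whereas the paper's route needs only the elementary verification of the orthogonality conditions against $\mathcal H^{m+1}_D(M)$ and $\mathcal H^{m-1}_D(M)$ and handles the inhomogeneous trace without a separate reduction. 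Your approach buys a conceptually standard, degree-by-degree local estimate; the paper's buys brevity and uniformity in $p$ and in the Sobolev order $k$ at the cost of leaning entirely on \cite[Theorem~3.2.5]{Schwarz1995}.
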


In Euclidean setting, this was proven in the case $m=1$ and $p=2$ by Costabel \cite{Costabel1990}; see also \cite{KirschHettlich2015,Monk2003}. On manifolds, for the case $p=2$ and for arbitrary $m$, this was proved in \cite{Assylbekov2019maxwell}.\smallskip

Write
$$
\mathcal H^{m}_D(M):=\{u\in W^{1,2}\Omega^m(M):du=0,\quad \delta u=0,\quad \mathbf t(u)=0\}.
$$
Proposition~\ref{prop::bounded embedding of W^p_{d,delta} with regular tangential trace into W^1p} is based on the following result from \cite{Schwarz1995}.

\begin{Lemma}\label{lemma::from Schwarz thm3.2.5}
Let $k\ge 0$ be an integer and let $p>1$. Given $w\in W^{k,p}\Omega^{m+1}(M)$, $v\in W^{k,p}\Omega^{m-1}(M)$ and $h\in W^{k+1,p}\Omega^m(M)$, there is a unique $\psi\in W^{k+1,p}\Omega^m(M)$, up to a form in $\mathcal H_D^m(M)$, that solves
$$
d\psi=w,\quad \delta \psi=v,\quad \mathbf{t}(\psi)=\mathbf{t}(h)
$$
if and only if
$$
dw=0,\quad \mathbf t(w)=\mathbf t(dh),\quad \delta v=0
$$
and
$$
(w|\chi)_{L^2\Omega^{m+1}(M)}=(\mathbf t(h)|\mathbf t(i_\nu \chi))_{L^2\Omega^m(\p M)},\quad (v|\lambda)_{L^2\Omega^{m-1}(M)}=0
$$
for all $\chi\in\mathcal H^{m+1}_D(M)$, $\lambda\in\mathcal H^{m-1}_D(M)$. Moreover, $\psi$ satisfies the estimate
\begin{align*}
\|\psi\|_{W^{k+1,p}\Omega^m(M)}\le &C\big(\|w\|_{W^{k,p}\Omega^{m+1}(M)}+\|v\|_{W^{k,p}\Omega^{m-1}(M)}\big)\\
&+C\big(\|\mathbf t(h)\|_{W^{k+1-1/p,p}\Omega^m(\p M)}+\|\mathbf t(*h)\|_{W^{k+1-1/p,p}\Omega^{n-m}(\p M)}\big).
\end{align*}
\end{Lemma}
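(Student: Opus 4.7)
The ``only if'' direction is a direct computation from the integration-by-parts formulas of Proposition~\ref{prop::tangential trace operator is extended to H_d}. The identities $dw=d^2\psi=0$ and $\delta v=\delta^2\psi=0$ are immediate, and the naturality $\mathbf{t}(d\psi)=d_{\p M}\mathbf{t}(\psi)$ combined with $\mathbf{t}(\psi)=\mathbf{t}(h)$ yields $\mathbf{t}(w)=\mathbf{t}(dh)$. For the orthogonality conditions, take $\chi\in\mathcal H^{m+1}_D(M)$: since $\delta\chi=0$ the formula gives $(w|\chi)=(d\psi|\chi)=(\psi|\delta\chi)+(\mathbf{t}(\psi)|\mathbf{t}(i_\nu\chi))=(\mathbf{t}(h)|\mathbf{t}(i_\nu\chi))$. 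Similarly, for $\lambda\in\mathcal H^{m-1}_D(M)$ one has $d\lambda=0$ and $\mathbf{t}(\lambda)=0$, so $(v|\lambda)=(\delta\psi|\lambda)=(\psi|d\lambda)-(\mathbf{t}(i_\nu\psi)|\mathbf{t}(\lambda))=0$. Uniqueness modulo $\mathcal H^m_D(M)$ is automatic: any two solutions differ by an element of this space by the very definition of $\mathcal H^m_D(M)$.

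For existence I would first homogenize the trace. Setting $\psi=h+\tilde\psi$, the system for $\tilde\psi$ becomes $d\tilde\psi=w-dh=:w'$, $\delta\tilde\psi=v-\delta h=:v'$, $\mathbf{t}(\tilde\psi)=0$, and the compatibility conditions translate to $dw'=0$, $\mathbf{t}(w')=0$, $\delta v'=0$, together with $(w'|\chi)=0$ and $(v'|\lambda)=0$ for all harmonic Dirichlet fields of the appropriate degree; the latter orthogonality uses $(\delta h|\lambda)=(h|d\lambda)-(\mathbf{t}(i_\nu h)|\mathbf{t}(\lambda))=0$. It therefore suffices to treat the case $h=0$.

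Next I would seek $\tilde\psi=d\alpha+\delta\beta+\kappa$ in accordance with the Hodge-Morrey-Friedrichs decomposition, where $\alpha\in W^{k+2,p}\Omega^{m-1}(M)$ carries relative (Dirichlet) boundary conditions forcing $\mathbf{t}(\alpha)=0$ (so that $\mathbf{t}(d\alpha)=d_{\p M}\mathbf{t}(\alpha)=0$), $\beta\in W^{k+2,p}\Omega^{m+1}(M)$ carries the dual boundary conditions ensuring $\mathbf{t}(\delta\beta)=0$, and $\kappa$ is a harmonic field used to match the remaining trace data. The equations reduce to the Poisson problems $\delta d\alpha=v'$ and $d\delta\beta=w'$ for the Hodge Laplacian $-\Delta=d\delta+\delta d$ under the corresponding relative/absolute boundary conditions. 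The compatibility identities $(v'|\lambda)=0$ and $(w'|\chi)=0$, together with $\mathbf{t}(w')=0$ providing the matching condition on $\beta$, are exactly the Fredholm solvability conditions for these Poisson problems, so solutions exist and are unique modulo harmonic fields. Summing the $W^{k+2,p}$-estimates for $\alpha$ and $\beta$ with the trace lift for $h$ and the harmonic part gives the claimed $W^{k+1,p}$-bound on $\psi$.

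\textbf{Main obstacle.} The technical core is $L^p$-elliptic regularity for the Hodge Laplacian on a compact manifold with boundary under the mixed tangential/normal boundary conditions above, for arbitrary $1<p<\infty$. For $p=2$ this is classical Morrey-Friedrichs Hilbert-space theory (as used in \cite{Assylbekov2017maxwellpartial}); for $p\neq 2$ it falls under the Agmon-Douglis-Nirenberg calculus, the Hodge boundary value problem being elliptic with complementing boundary conditions. One must then combine these $L^p$ elliptic estimates with a Gaffney-type inequality (whose $W^{1,p}$ incarnation is precisely Proposition~\ref{prop::bounded embedding of W^p_{d,delta} with regular tangential trace into W^1p}) to convert separate control of $d\psi$, $\delta\psi$, and $\mathbf{t}(\psi)$ into a full $W^{1,p}$ bound, and bootstrap up to $W^{k+1,p}$. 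Since this lemma is invoked in the paper only as a black box citing Schwarz \cite{Schwarz1995}, I would defer the detailed regularity bookkeeping to that monograph rather than reproduce it.
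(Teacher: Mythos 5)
Your proposal is correct and takes essentially the same route as the paper: the paper's entire proof of this lemma is the single line ``Follows from \cite[Theorem~3.2.5]{Schwarz1995}.'' Your sketch (integration by parts for necessity, trace homogenization plus the Hodge--Morrey--Friedrichs decomposition and $L^p$ elliptic regularity for existence) is the standard argument underlying that theorem, and you ultimately defer the technical core to the same source the authors cite.
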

\begin{proof}
Follows from \cite[Theorem~3.2.5]{Schwarz1995}.
\end{proof}

The proof of Proposition~\ref{prop::bounded embedding of W^p_{d,delta} with regular tangential trace into W^1p} is identical to the proof of \cite[Proposition~3.2]{Assylbekov2019maxwell} (case $p=2$), but for different integrability spaces. Therefore, we do not include it here. We only mention that the use of Lemma~\ref{lemma::from Schwarz thm3.2.5} is crucial and similar ideas were used in the next proposition, after certain modifications.

\begin{Proposition}\label{prop::bounded embedding of W^p_{d,delta} with regular normal trace into W^1p}
Suppose that $p>1$, $u\in W^p_{d}\Omega^m(M)\cap W^p_\delta\Omega^m(M)$ and $\mathbf t(i_\nu u)\in W^{1-1/p,p}\Omega^{m-1}(\p M)$. Then $u\in W^{1,p}\Omega^m(M)$ and
$$
\|u\|_{W^{1,p}\Omega^m(M)}\le C\big(\|u\|_{W^p_d\Omega^m(M)}+\|\delta u\|_{L^p\Omega^{m-1}(M)}+\|\mathbf t(i_\nu u)\|_{W^{1-1/p,p}\Omega^{m-1}(\p M)}\big)
$$
for some constant $C>0$ independent of $u$.
\end{Proposition}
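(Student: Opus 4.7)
The plan is to reduce the statement to Proposition~\ref{prop::bounded embedding of W^p_{d,delta} with regular tangential trace into W^1p} via Hodge star duality, since $*$ exchanges $d$ with $\delta$ (up to signs) and, on the boundary, swaps the tangential trace with the normal trace.

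Set $v:=*u\in L^p\Omega^{n-m}(M)$. From \eqref{eqn::delta in terms of * and d} together with $*^2=\pm\id$, a direct computation gives $\delta v=\pm *(du)$ and $dv=\pm*(\delta u)$. Because $*$ acts as a pointwise linear isometry with respect to $g$, this yields $v\in W^p_d\Omega^{n-m}(M)\cap W^p_\delta\Omega^{n-m}(M)$ with
$$
\|v\|_{W^p_d\Omega^{n-m}(M)}+\|\delta v\|_{L^p\Omega^{n-m-1}(M)}\le C\bigl(\|u\|_{W^p_d\Omega^m(M)}+\|\delta u\|_{L^p\Omega^{m-1}(M)}\bigr).
$$
Next, I would establish the boundary identity
$$
\mathbf{t}(*u)=\pm *_{\p M}\mathbf{t}(i_\nu u),
$$
where $*_{\p M}$ is the Hodge star of $(\p M,\imath^* g)$. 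For smooth $u$ this is a direct calculation in a local orthonormal frame $(e_1,\dots,e_{n-1},\nu)$ near $\p M$: decompose $u=u_T+\nu^\flat\wedge i_\nu u$ with $u_T$ purely tangential, note that $*u_T$ contains a factor of $\nu^\flat$ (hence pulls back to zero on $\p M$), and use the standard ambient-boundary relation $*(\nu^\flat\wedge\alpha)=\pm *_{\p M}\alpha$ for tangential $\alpha$. Since $*_{\p M}$ is an isomorphism of $W^{1-1/p,p}$-Sobolev spaces of forms on $\p M$, the hypothesis translates into $\mathbf{t}(v)\in W^{1-1/p,p}\Omega^{n-m}(\p M)$ with
$$
\|\mathbf{t}(v)\|_{W^{1-1/p,p}\Omega^{n-m}(\p M)}\le C\|\mathbf{t}(i_\nu u)\|_{W^{1-1/p,p}\Omega^{m-1}(\p M)}.
$$

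With both the interior and boundary regularity in hand, Proposition~\ref{prop::bounded embedding of W^p_{d,delta} with regular tangential trace into W^1p} applied to $v$ furnishes $v\in W^{1,p}\Omega^{n-m}(M)$ together with the estimate
$$
\|v\|_{W^{1,p}\Omega^{n-m}(M)}\le C\bigl(\|v\|_{W^p_d\Omega^{n-m}(M)}+\|\delta v\|_{L^p\Omega^{n-m-1}(M)}+\|\mathbf{t}(v)\|_{W^{1-1/p,p}\Omega^{n-m}(\p M)}\bigr).
$$
Since $u=\pm *v$ and the Hodge star is parallel with respect to the Levi-Civita connection, $*$ is a $W^{1,p}$-isomorphism between $\Omega^{n-m}(M)$ and $\Omega^m(M)$; chaining the three displayed estimates delivers the claimed bound on $\|u\|_{W^{1,p}\Omega^m(M)}$.

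The main obstacle will be making the trace identity $\mathbf{t}(*u)=\pm *_{\p M}\mathbf{t}(i_\nu u)$ rigorous at the working regularity $u\in W^p_d\Omega^m(M)\cap W^p_\delta\Omega^m(M)$, where both sides a priori live only in $W^{-1/p,p}$ by Proposition~\ref{prop::tangential trace operator is extended to H_d}. I would either approximate $u$ by smooth forms in the graph norm of $(d,\delta)$ and pass to the limit against a dense family of test forms (using the boundedness of both extended trace operators), or else verify the identity directly in the sense of distributions on $\p M$ by combining the two integration-by-parts formulas in Proposition~\ref{prop::tangential trace operator is extended to H_d}.
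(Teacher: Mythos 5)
Your argument is correct, and it is a more economical route than the one the paper takes. Both proofs begin by passing to $\tilde u=*u$, but from there they diverge: you reduce everything to Proposition~\ref{prop::bounded embedding of W^p_{d,delta} with regular tangential trace into W^1p} by establishing the boundary identity $\mathbf t(*u)=\pm *_{\p M}\mathbf t(i_\nu u)$ and then transporting the $W^{1,p}$ estimate back through the (parallel, pointwise isometric) Hodge star, whereas the paper does not invoke Proposition~\ref{prop::bounded embedding of W^p_{d,delta} with regular tangential trace into W^1p} at all and instead re-runs its proof for $\tilde u$: it builds an auxiliary $\eta\in W^{1,p}\Omega^m(M)$ with $\mathbf t(\eta)=0$ and $\mathbf t(i_\nu\eta)=\mathbf t(i_\nu u)$, shows $\mathbf t(\tilde u)=\mathbf t(*\eta)$ by the same double integration by parts you sketch, verifies all the compatibility conditions of Lemma~\ref{lemma::from Schwarz thm3.2.5} (including $\mathbf t(d\tilde u)=\mathbf t(d h)$ and the orthogonality relations against $\mathcal H_D$), and finally kills the harmonic discrepancy $\rho=\tilde u-\psi$ by uniqueness for the Dirichlet Hodge Laplacian. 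What your approach buys is modularity and brevity; what it costs is exactly the point you flag, namely justifying $\mathbf t(*u)=\pm *_{\p M}\mathbf t(i_\nu u)$ at the regularity $W^p_d\cap W^p_\delta$, where both sides a priori live only in $W^{-1/p,p}$. Your second suggested route for this (pairing against $\varphi=\mathbf t(i_\nu\zeta)$ with $\zeta\in W^{1,p'}$ and using both formulas of Proposition~\ref{prop::tangential trace operator is extended to H_d}, plus the smooth-case identity applied to the $W^{1,p'}$ test form $\zeta$ to convert $\mathbf t(*\zeta)$ into $\pm *_{\p M}\varphi$) does close, and is in substance the same computation the paper performs when it proves $\mathbf t(\tilde u)=\mathbf t(h)$; the paper merely packages it through the auxiliary extension $\eta$ rather than stating the trace identity abstractly. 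The remaining ingredients of your argument --- $d(*u)=\pm *\delta u$, $\delta(*u)=\pm *du$, the invertibility of $*_{\p M}$ on $W^{1-1/p,p}$ forms on $\p M$, and the fact that $*$ preserves the invariant $W^{1,p}$ norm because it is parallel --- are all sound, and chaining the estimates does reproduce the stated bound.
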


\begin{proof}
Since $\mathbf{t}(i_\nu u)\in W^{1-1/p,p}\Omega^{m-1}(\p M)$, by discussion in Section~\ref{section::extensions of t and t i_nu} there is $\eta\in W^{1,p}\Omega^m(M)$ such that $\mathbf{t}(\eta)=0$, $\mathbf t(i_\nu \eta)=\mathbf t(i_\nu u)$ and
$$
\|\eta\|_{W^{1,p}\Omega^m(M)}\le C\|\mathbf t(i_\nu \eta)\|_{W^{1-1/p,p}\Omega^{m-1}(\p M)}=C\|\mathbf t(i_\nu u)\|_{W^{1-1/p,p}\Omega^{m-1}(\p M)}.
$$
Set $h=*\eta$, then $h\in W^{1,p}\Omega^{n-m}(M)$. Using boundedness of $\mathbf t:W^{1,p}\Omega^{n-m}(M)\to W^{1-1/p,p}\Omega^{n-m}(\p M)$ and the above estimate,
\begin{equation}\label{eqn::estimate for the trace of h in the proof of embedding result}
\begin{aligned}
\|\mathbf t(h)\|_{W^{1-1/p,p}\Omega^{n-m}(\p M)}&\le C\|h\|_{W^{1,p}\Omega^{n-m}(M)}\\
&\le C\|\eta\|_{W^{1,p}\Omega^{m}(M)}\\
&\le C\|\mathbf t(i_\nu u)\|_{W^{1-1/p,p}\Omega^{m-1}(\p M)}.
\end{aligned}
\end{equation}
Set $\tilde u:=*u$, then it is clear that $\tilde u\in W^{p}_d\Omega^{n-m}(M)\cap W^{p}_\delta\Omega^{n-m}(M)$. Write $w=d\tilde u\in L^p\Omega^{n-m+1}(M)$ and $v=\delta \tilde u\in L^p\Omega^{n-m-1}(M)$.\smallskip

An important fact is that $\mathbf t(\tilde u)=\mathbf t(h)$. Indeed, for arbitrary $\varphi\in W^{1/p,p/(p-1)}\Omega^{n-m}(\p M)$, as discussed in Section~\ref{section::extensions of t and t i_nu}, there is $\zeta\in W^{1,p/(p-1)}\Omega^{n-m+1}(M)$ such that $\mathbf t(i_\nu \zeta)=\varphi$. Then, using integration by parts formulas in Proposition~\ref{prop::tangential trace operator is extended to H_d}, we get
\begin{align*}
(\mathbf t(\tilde u-h)|\varphi)_{\p M}&=(\mathbf t(*(u-\eta))|\mathbf t(i_\nu \zeta))_{\p M}\\
&=(d*(u-\eta)|\zeta)_{L^2\Omega^{n-m+1}(M)}-(*(u-\eta)|\delta\zeta)_{L^2\Omega^{n-m}(M)}\\
&=(\delta(u-\eta)|*\zeta)_{L^2\Omega^{m-1}(M)}-(u-\eta|d*\zeta)_{L^2\Omega^{m-1}(M)}\\
&=-(\mathbf t(i_\nu (u-\eta))|\mathbf t(*\zeta))_{\p M}=0,
\end{align*}
since $\mathbf t(i_\nu \eta)=\mathbf t(i_\nu u)$. Therefore, $\mathbf t(\tilde u)=\mathbf t(h)$.\smallskip

We wish to use Lemma~\ref{lemma::from Schwarz thm3.2.5}, and hence we need to show that $w$, $v$ and $h$ satisfy the hypothesis of Lemma~\ref{lemma::from Schwarz thm3.2.5}. Obviously, we have $dw=0$ and $\delta v=0$. Integrating by parts and using that $\mathbf t(\tilde u)=\mathbf t(h)$, we can show that for all $\chi\in\mathcal H^{n-m+1}_D(M)$
$$
(w|\chi)_{L^2\Omega^{n-m+1}(M)}=(d\tilde u|\chi)_{L^2\Omega^{n-m+1}(M)}=(\mathbf{t}(h)|\mathbf{t}(i_\nu\chi))_{L^2\Omega^{n-m}(\p M)}.
$$
Similary for all $\lambda\in\mathcal H^{n-m-1}_D(M)$, using the integration by parts formula in part (b) of Proposition~\ref{prop::tangential trace operator is extended to H_d}, we can show that
$$
(v|\lambda)_{L^2\Omega^{n-m-1}(M)}=(\delta \tilde u|\lambda)_{L^2\Omega^{n-m-1}(M)}=-(\mathbf t(i_\nu \tilde u)|\mathbf t(\lambda))_{\p M}=0.
$$
Next, we show that $\mathbf t(w)=\mathbf t(dh)$. For arbitrary $\varphi\in W^{1/p,p/(p-1)}\Omega^{n-m+1}(\p M)$, as discussed in Section~\ref{section::extensions of t and t i_nu}, there is $\zeta\in W^{1,p/(p-1)}\Omega^{n-m+2}(M)$ such that $\mathbf t(i_\nu \zeta)=\varphi$. Then, using integration by parts formulas in Proposition~\ref{prop::tangential trace operator is extended to H_d}, we get
$$
(\mathbf t(w)|\varphi)_{\p M}=(\mathbf t(d\tilde u)|\mathbf t(i_\nu \zeta))_{\p M}=-(d\tilde u|\delta\zeta)_{L^2\Omega^{n-m+1}(M)}=-(\mathbf t(\tilde u)|\mathbf t(i_\nu \delta \zeta))_{\p M}.
$$
Since $\mathbf t(\tilde u)=\mathbf t(h)$, using integration by parts formulas in Proposition~\ref{prop::tangential trace operator is extended to H_d}, gives
$$
(\mathbf t(w)|\varphi)_{\p M}=-(\mathbf t(h)|\mathbf t(i_\nu \delta \zeta))_{\p M}=-(dh|\delta\zeta)_{L^2\Omega^{n-m+1}(M)}=(\mathbf t(dh)|\varphi)_{\p M},
$$
which implies $\mathbf t(w)=\mathbf t(dh)$.\smallskip

Thus, all hypotheses of Lemma~\ref{lemma::from Schwarz thm3.2.5} are satisfied for $w$, $v$ and $h$. Hence, we find $\psi\in W^{1,p}\Omega^{n-m}(M)$ such that $d\psi=w$, $\delta\psi=v$ and ${\mathbf{t}}(\psi)={\mathbf{t}}(h)={\mathbf{t}}(\tilde u)$ and satisfying
\begin{align*}
\|\psi\|_{W^{1,p}\Omega^{n-m}(M)}\le &C\big(\|w\|_{L^p\Omega^{n-m+1}(M)}+\|v\|_{L^p\Omega^{n-m-1}(M)}\big)\\
&+C\big(\|\mathbf t(h)\|_{W^{1-1/p,p}\Omega^{n-m}(\p M)}+\|\mathbf t(*h)\|_{W^{1-1/p,p}\Omega^{m}(\p M)}\big).
\end{align*}
Using \eqref{eqn::estimate for the trace of h in the proof of embedding result}, $\mathbf t(*h)=\mathbf t(\eta)=0$, $w=d*u$ and $v=\delta *u$, we get
$$
\|\psi\|_{W^{1,p}\Omega^{n-m}(M)}\le C\big(\|u\|_{W^p_d\Omega^{m}(M)}+\|\delta u\|_{L^p\Omega^{m-1}(M)}+\|\mathbf t(i_\nu u)\|_{W^{1-1/p,p}\Omega^{m-1}(\p M)}\big).
$$
Write $\rho=\tilde u-\psi$, then $d\rho=0$ and $\delta \rho=0$. Therefore, $\rho$ solves $-\Delta \rho=0$ with $\mathbf t(\rho)=0$, $\mathbf t(\delta \rho)=0$. By \cite[Theorem~2.2.4]{Schwarz1995}, it follows that $\rho=0$. Since $\tilde u=*u$, the last estimate together with \eqref{eqn::estimate for the trace of h in the proof of embedding result} clearly implies the result.
\end{proof}


\section{{ Well-posedness of the direct problem}}\label{sec:directproblem}

\subsection{  Direct problem for linear equations}
To prove existence and uniqueness result for nonlinear equations, we first need to study the direct problem for linear equations.

\begin{Theorem}\label{thm::well posedness new version homogeneous}
Let $2\leq p\leq 6$ and let $\varepsilon,\mu\in C^1(M)$ be complex functions with positive real parts. There is a discrete subset $\Sigma$ of $\C$ such that for all $\omega\notin \Sigma$ and for a given $f\in TW^{1-1/p,p}_{\Div}(\p M)$ the Maxwell's equation 
\begin{equation}\label{eqn::Maxwell homogeneous in appendix}
* dE=i\omega\mu H,\quad
* dH=-i\omega \varepsilon E
\end{equation}
has a unique solution $(E,H)\in W^{1,p}_{\Div}(M)\times W^{1,p}_{\Div}(M)$ satisfying $\mathbf{t}(E)=f$ and 
$$
\|E\|_{W^{1,p}_{\Div}(M)}+\|H\|_{W^{1,p}_{\Div}(M)}\le C\|f\|_{TW^{1-1/p,p}_{\Div}(\p M)},
$$
for some constant $C>0$ independent of $f$.
\end{Theorem}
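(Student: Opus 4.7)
The plan is to eliminate $H$ from the linear Maxwell system, reduce to a second-order Hodge--Helmholtz-type equation for $E$, run an analytic Fredholm argument in the Hilbert setting ($p=2$) to produce the discrete resonance set $\Sigma$, and then upgrade regularity to $W^{1,p}_{\Div}$ using Propositions~\ref{prop::bounded embedding of W^p_{d,delta} with regular tangential trace into W^1p}~and~\ref{prop::bounded embedding of W^p_{d,delta} with regular normal trace into W^1p}.

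First I would pick an extension $E_0 \in W^{1,p}_{\Div}(M)$ of $f$ with $\|E_0\|_{W^{1,p}_{\Div}(M)} \le C\|f\|_{TW^{1-1/p,p}_{\Div}(\p M)}$, write $E = E_0 + \tilde E$, and reformulate the problem as
\[
\mathcal L_\omega \tilde E := *d(\mu^{-1} *d\tilde E) - \omega^2 \varepsilon \tilde E = G, \qquad \mathbf{t}(\tilde E) = 0,
\]
with $G$ depending linearly on $E_0$ and $\omega$; the magnetic field is then recovered from $H = (i\omega\mu)^{-1}*d(E_0 + \tilde E)$. Applying $\delta$ to the reduced equation forces the divergence constraint $\delta(\varepsilon \tilde E) = -\delta(\varepsilon E_0)$ automatically (via $d^2 = 0$), so the natural space for $\tilde E$ in the $p=2$ setting is $V := \{u \in H_d\Omega^1(M) : \mu^{-1}*du \in H_d\Omega^2(M),\ \mathbf{t}(u) = 0\}$. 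By Proposition~\ref{prop::bounded embedding of W^p_{d,delta} with regular tangential trace into W^1p}, elements of $V$ with $\delta(\varepsilon u) \in L^2$ actually lie in $H^1\Omega^1(M)$, and Rellich compactness then makes $\mathcal L_\omega : V \to L^2$ Fredholm of index zero. At $\omega = i\tau$ with $\tau$ sufficiently large the operator is coercive and hence invertible; since $\omega \mapsto \mathcal L_\omega$ is holomorphic on $\mathbb{C}$, the analytic Fredholm theorem yields a discrete exceptional set $\Sigma \subset \mathbb{C}$ outside of which $\mathcal L_\omega$ is invertible, producing a unique $H^1$-solution $(E, H)$ of~\eqref{eqn::Maxwell homogeneous in appendix} with $\mathbf{t}(E) = f$.

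For the $W^{1,p}$ bootstrap with $2 \le p \le 6$, the Sobolev embedding $H^1 \hookrightarrow L^6$ in three dimensions (the source of the endpoint $p = 6$) gives $E, H \in L^p$; then $dE = i\omega\mu\,{*}H$ and $dH = -i\omega\varepsilon\,{*}E$ lie in $L^p$ by the Maxwell equations, and $\delta(\varepsilon E) = 0 = \delta(\mu H)$ follow from $d^2 = 0$ together with $\varepsilon, \mu \in C^2$, so $E, H \in W^p_d\Omega^1(M) \cap W^p_\delta\Omega^1(M)$. Since $\mathbf{t}(E) = f \in W^{1-1/p,p}\Omega^1(\p M)$, Proposition~\ref{prop::bounded embedding of W^p_{d,delta} with regular tangential trace into W^1p} upgrades $E$ to $W^{1,p}$. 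For $H$, the key identity is
\[
\mathbf{t}(i_\nu H) = \frac{1}{i\omega\mu|_{\p M}}\,\Div(f),
\]
obtained by combining~\eqref{eqn::expression for Div} with $*dE = i\omega\mu H$; since $\Div(f) \in W^{1-1/p,p}(\p M)$ by hypothesis, Proposition~\ref{prop::bounded embedding of W^p_{d,delta} with regular normal trace into W^1p} upgrades $H$ to $W^{1,p}$, and $\Div(\mathbf{t}(H)) = -i\omega\varepsilon\,\mathbf{t}(i_\nu E) \in W^{1-1/p,p}(\p M)$ once $E \in W^{1,p}$, giving $H \in W^{1,p}_{\Div}(M)$. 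The main obstacle I anticipate is precisely this boundary-regularity propagation from $E$ to $H$: because $H$ has no directly prescribed tangential trace, both the $\Div$-component of the data space $TW^{1-1/p,p}_{\Div}(\p M)$ and the companion normal-trace regularity result (Proposition~\ref{prop::bounded embedding of W^p_{d,delta} with regular normal trace into W^1p}) are essential to close the bootstrap for the magnetic field.
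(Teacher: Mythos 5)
Your regularity bootstrap is essentially the paper's own argument. The paper first produces a unique $H_d\Omega^1(M)\times H_d\Omega^1(M)$ solution, upgrades it to $H^1_{\Div}(M)\times H^1_{\Div}(M)$ via Theorem~\ref{thm::main regularity result}, uses the Sobolev embedding $H^1\Omega^1(M)\inclusion L^p\Omega^1(M)$ for $2\le p\le 6$ together with the equations to land in $W^p_d\Omega^1(M)\times W^p_d\Omega^1(M)$, and then applies Theorem~\ref{thm::main regularity result} again; the proof of that appendix theorem is exactly your combination of $\delta(\varepsilon E)=\delta(\mu H)=0$, Proposition~\ref{prop::bounded embedding of W^p_{d,delta} with regular tangential trace into W^1p} for $E$, and the identity $i_\nu H|_{\p M}=(i\omega\mu)^{-1}\Div(f)$ feeding Proposition~\ref{prop::bounded embedding of W^p_{d,delta} with regular normal trace into W^1p} for $H$. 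Where you genuinely diverge is the existence step: the paper does not prove the $L^2$ theory here at all, but simply cites \cite[Theorem~A.1]{Assylbekov2017maxwellpartial} for the unique $H_d\times H_d$ solution outside a discrete set $\Sigma$, whereas you sketch a self-contained second-order reduction plus analytic Fredholm theory (the classical route of Monk and Kirsch--Hettlich).

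That alternative is viable, but as written it has one genuine gap: the claim that ``Rellich compactness then makes $\mathcal L_\omega:V\to L^2$ Fredholm of index zero.'' The space $V$ you define contains every closed $1$-form with vanishing tangential trace, in particular the infinite-dimensional subspace $\{dv:v\in H^1_0(M)\}$, on which $\mathcal L_\omega u=-\omega^2\varepsilon u$ and no extra interior regularity is gained; consequently $V$ is \emph{not} compactly embedded in $L^2\Omega^1(M)$, and Proposition~\ref{prop::bounded embedding of W^p_{d,delta} with regular tangential trace into W^1p} only becomes available \emph{after} the divergence constraint $\delta(\varepsilon u)\in L^2$ has been imposed --- which, as you correctly note, is a consequence of the equation rather than a property of $V$. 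To close the argument you must decompose $L^2\Omega^1(M)$ into $\varepsilon$-weighted gradients and their divergence-free complement, invert the zeroth-order operator on the gradient part, and run the coercive-plus-compact Fredholm argument only on the constrained complement (or, equivalently, pass to the augmented Hodge--Dirac system as in Section~\ref{sec:reduction to Hodge Schrodinger}, which is what the cited reference does). This is standard and fixable, but it is precisely the content the paper outsources, and the one-line Rellich claim does not substitute for it.
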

\begin{proof}
Since $p>1$, the inclusion $TW^{1-1/p,p}_{\Div}(\p M)\inclusion TH_d\Omega^1(\p M)$ is bounded. Then by \cite[Theorem~1.1]{Assylbekov2019maxwell} there is a unique solution $(E,H)\in H_{d}\Omega^1(M)\times H_{d}\Omega^1(M)$ of~\eqref{eqn::Maxwell homogeneous in appendix} such that $\mathbf{t}(E)=f$. 
According to Theorem~\ref{thm::main regularity result}, we have $(E,H)\in H^1_{\Div}(M)\times H^1_{\Div}(M)$. 
By Sobolev embedding, inclusion $W^{1,2}\Omega^1(M)\inclusion L^p\Omega^1(M)$ is bounded for $2\le p\le 6$; see \cite[Theorem~1.3.6 (a)]{Schwarz1995}. Using this together with \eqref{eqn::Maxwell homogeneous in appendix}, we get $(E,H)\in W^p_d\Omega^1(M)\times W^p_d\Omega^1(M)$. 
Recall that $\mathbf{t}(E)=f\in TW^{1-1/p,p}_{\Div}(\p M)$. Then an application of Theorem~\ref{thm::main regularity result} implies that $(E,H)\in W^{1,p}_{\Div}(M)\times W^{1,p}_{\Div}(M)$ and satisfies the estimate stated in the theorem. The proof is complete.
\end{proof}

We also consider the linear non-homogeneous problem. The following well-posedness result will be used in dealing with nonlinear terms of \eqref{eqn::Maxwell}. We define
$$
W^{1,p}_D\Omega^1(M):=\{u\in W^{1,p}\Omega^1(M):\mathbf t(u)=0\}.
$$

\begin{Theorem}\label{thm::well posedness new version}
Let $2\le p\le 6$ and let $\varepsilon,\mu\in C^1(M)$ be complex functions with positive real parts. Suppose that $J_e,J_m\in W^p_\delta\Omega^1(M)$ and $i_\nu J_e|_{\p M}, i_\nu J_m|_{\p M}\in W^{1-1/p,p}(\p M)$. There is a discrete subset $\Sigma$ of $\C$ such that for all $\omega\notin \Sigma$ the Maxwell's system
\begin{equation}\label{eqn::Maxwell in appendix}
* dE=i\omega\mu H+J_m,\quad
* dH=-i\omega \varepsilon E-J_e
\end{equation}
has a unique solution $(E,H)\in W^{1,p}_D\Omega^1(M)\times W^{1,p}_{\Div}(M)$ satisfying
\begin{align*}
\|E\|_{W^{1,p}_{\Div}(M)}+\|H\|_{W^{1,p}_{\Div}(M)}&\le C(\|i_\nu J_e|_{\p M}\|_{W^{1-1/p,p}(\p M)}+\|i_\nu J_m|_{\p M}\|_{W^{1-1/p,p}(\p M)})\\
&\qquad+C(\|J_e\|_{W^{p}_\delta\Omega^1(M)}+\|J_m\|_{W^{p}_\delta\Omega^1(M)})
\end{align*}
for some constant $C>0$ independent of $J_e$ and $J_m$.
\end{Theorem}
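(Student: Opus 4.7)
The plan is to follow the same three-step bootstrap used in the proof of Theorem~\ref{thm::well posedness new version homogeneous}, carrying the source terms $J_e, J_m$ through each step. First I would establish a weak $H_d\Omega^1(M)\times H_d\Omega^1(M)$ solution with $\mathbf t(E)=0$, then apply the regularity Theorem~\ref{thm::main regularity result} to successively upgrade to $H^1_{\Div}\times H^1_{\Div}$, then, via the 3D Sobolev embedding $H^1\inclusion L^p$ for $p\le 6$, to $W^p_d\times W^p_d$, and finally to $W^{1,p}_{\Div}\times W^{1,p}_{\Div}$.

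For the $H_d$-level existence I would extend the Fredholm argument of \cite[Theorem~A.1]{Assylbekov2017maxwellpartial} (used in the homogeneous case) to accommodate the source terms. Since the principal operator is unchanged, the resonant set $\Sigma\subset\C$ coincides with the one in Theorem~\ref{thm::well posedness new version homogeneous}: once uniqueness is verified for $\omega\notin\Sigma$, the Fredholm alternative delivers a weak solution depending boundedly on the data. The inclusions $J_e,J_m\in L^p\Omega^1(M)\inclusion L^2\Omega^1(M)$ (valid because $p\ge 2$ and $M$ is compact) guarantee that the sources define a bounded antilinear functional on the natural test-form space, and the homogeneous boundary constraint $\mathbf t(E)=0$ is naturally built into the variational formulation.

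To bootstrap, I would exploit the codifferential identity $\delta\,{*}d\equiv 0$ on 1-forms in dimension three (an immediate consequence of $d^2=0$ and \eqref{eqn::delta in terms of * and d}). Applying $\delta$ to~\eqref{eqn::Maxwell in appendix} then yields $i\omega\,\delta(\mu H)=-\delta J_m$ and $i\omega\,\delta(\varepsilon E)=-\delta J_e$, both in $L^p\Omega^0(M)$ by the hypothesis $J_e,J_m\in W^p_\delta\Omega^1(M)$. Together with the normal boundary trace assumptions $i_\nu J_e|_{\p M},\,i_\nu J_m|_{\p M}\in W^{1-1/p,p}(\p M)$, these are precisely the inputs needed for Theorem~\ref{thm::main regularity result}. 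A first application promotes the weak solution to $H^1_{\Div}\times H^1_{\Div}$; the Sobolev embedding, fed back into~\eqref{eqn::Maxwell in appendix}, then places $(E,H)$ in $W^p_d\Omega^1(M)\times W^p_d\Omega^1(M)$, and a second application of Theorem~\ref{thm::main regularity result} delivers $(E,H)\in W^{1,p}_{\Div}\times W^{1,p}_{\Div}$ together with the quantitative estimate.

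The principal obstacle lies in the first step: the non-homogeneous $H_d$-level existence is not directly quoted from earlier work, so one must set up the variational formulation so that the source terms induce a bounded antilinear functional compatible with $\mathbf t(E)=0$, and verify that uniqueness for $\omega\notin\Sigma$ reduces to the corresponding uniqueness statement for the homogeneous system so that the resonant set is unchanged. The codifferential identity above is the crucial bridge between the source regularity hypotheses and the regularity theorem: without it, the $W^p_\delta$-control on $J_e,J_m$ would not transfer into the divergence hypotheses required by Theorem~\ref{thm::main regularity result}.
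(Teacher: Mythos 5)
Your proposal follows essentially the same route as the paper: weak $L^2$-level existence with $\mathbf t(E)=0$, then a bootstrap through Theorem~\ref{thm::main regularity result}, the Sobolev embedding $H^1\inclusion L^p$ for $p\le 6$, and a second application of the regularity theorem. The one step you flag as the principal obstacle, the non-homogeneous $H_d$-level existence, is in fact handled in the paper by directly citing \cite[Theorem~A.2]{Assylbekov2017maxwellpartial} (using $W^p_\delta\Omega^1(M)\inclusion L^2\Omega^1(M)$ for $p\ge 2$), so no new Fredholm argument is needed.
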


\begin{proof}
We follow the similar approach as in the proof of Theorem~\ref{thm::well posedness new version homogeneous}. Since $p\ge 2$, the inclusion $W^{p}_\delta\Omega^1(M)\inclusion L^2\Omega^1(M)$ is bounded. Then by \cite[Theorem~1.2]{Assylbekov2019maxwell} there is a unique solution $(E,H)\in H_{d}\Omega^1(M)\times H_{d}\Omega^1(M)$ of~\eqref{eqn::Maxwell in appendix} such that $\mathbf{t}(E)=0$. 
According to Theorem~\ref{thm::main regularity result}, we have $(E,H)\in H^1_{D}\Omega^1(M)\times H^1_{\Div}(M)$. 
Using Sobolev embedding $H^1\Omega^1(M)\inclusion L^p\Omega^1(M)$ for $2\le p\le 6$ together with \eqref{eqn::Maxwell in appendix}, we get $(E,H)\in W^p_d\Omega^1(M)\times W^p_d\Omega^1(M)$. 
Since $\mathbf{t}(E)=0$, Theorem~\ref{thm::main regularity result} implies that $(E,H)\in W^{1,p}_{D}\Omega^1(M)\times W^{1,p}_{\Div}(M)$ and satisfies the estimate stated in the theorem. The proof is complete.
\end{proof}


\subsection{ Proof of Theorem~\ref{thm::main result 1}}

Suppose $f\in TW^{1-1/p,p}_{\Div}(\p M)$ such that $\|f\|_{TW^{1-1/p,p}_{\Div}(\p M)}<\epsilon$, where $\epsilon>0$ to be determined. By Theorem~\ref{thm::well posedness new version homogeneous}, when $2\leq p\leq 6$, there is a unique $(E_0,H_0)\in W^{1,p}_{\Div}(M)\times W^{1,p}_{\Div}(M)$ solving
$$
* dE_0=i\omega\mu H_0,\quad * dH_0=-i\omega \varepsilon E_0,\quad \mathbf t(E_0)=f
$$
and satisfying
$$
\|E_0\|_{W^{1,p}_{\Div}(M)}+\|H_0\|_{W^{1,p}_{\Div}(M)}\le C\|f\|_{TW^{1-1/p,p}_{\Div}(\p M)}.
$$
Then $(E,H)$ is a solution of \eqref{eqn::Maxwell} if and only if $(E',H')$ defined by $(E,H)=(E_0,H_0)+(E',H')$ satisfies
\begin{equation}\label{eqn::modified nonlinear Maxwell equation with zero boundary data}
\begin{cases}
* dE'=i\omega\mu H'+i\omega b|H_0+H'|_g^{2}(H_0+H'),\\
* dH'=-i\omega \varepsilon E'-i\omega a|E_0+E'|_g^{2}(E_0+E'),\\
\mathbf t(E')=0.
\end{cases}
\end{equation}
By Theorem~\ref{thm::well posedness new version}, there is a bounded and linear operator
$$
\mathcal G_\omega^{\varepsilon,\mu}:W^{1,p}\Omega^1(M)\times W^{1,p}\Omega^1(M)\to W^{1,p}_D\Omega^1(M)\times W^{1,p}_{\Div}(M)
$$
mapping $(J_e,J_m)\in W^{1,p}\Omega^1(M)\times W^{1,p}\Omega^1(M)$ to the unique solution $(\widetilde E,\widetilde H)$ of the problem
$$
* d\widetilde E=i\omega\mu \widetilde H+J_m,\quad * d\widetilde H=-i\omega \varepsilon \widetilde E-J_e,\quad \mathbf t(\widetilde E)=0.
$$
Define $X_\delta$ to be the set of $(e,h)\in W^{1,p}_D\Omega^1(M)\times W^{1,p}_{\Div}(M)$ such that
$$
\|(e,h)\|_{W^{1,p}\Omega^1(M)\times W^{1,p}_{\Div}(M)}:=\|e\|_{W^{1,p}\Omega^1(M)}+\|h\|_{W^{1,p}_{\Div}(M)}\le\delta,
$$
where $\delta>0$ will be determined later. Define an operator $A$ on $X_\delta$ as
$$
A(e,h):=\mathcal G_\omega^{\varepsilon,\mu}\big(i\omega a |E_0+e|_g^{2}(E_0+e)\, ,\, i\omega b|H_0+h|_g^{2}(H_0+h)\big).
$$

We wish to show that for sufficiently small $\epsilon>0$ and $\delta>0$, depending on the frequency $\omega$, the operator $A$ is a contraction on $X_\delta$.\smallskip

First, we show that $A$ maps $X_\delta$ into itself. Using Lemma~\ref{lemma::product estimate}, we can show that when $p>n=3$, for all $(e,h)\in X_\delta$,
\begin{align*}
\|A(e,h)&\|_{W^{1,p}\Omega^1(M)\times W^{1,p}_{\Div}(M)}\\
&\le C\omega\big(\|\,|E_0+e|_g^{2}(E_0+e)\|_{W^{1,p}\Omega^1(M)}+\|\,|H_0+h|_g^{2}(H_0+h)\|_{W^{1,p}\Omega^1(M)}\big)\\
&\le C\omega\big(\|E_0+e\|_{W^{1,p}\Omega^1(M)}^{3}+\|H_0+h\|_{W^{1,p}\Omega^1(M)}^{3}\big)\\
&\le C\omega\big(\|E_0\|_{W^{1,p}\Omega^1(M)}^{3}+\|e\|_{W^{1,p}\Omega^1(M)}^{3}+\|H_0\|_{W^{1,p}\Omega^1(M)}^{3}+\|h\|_{W^{1,p}\Omega^1(M)}^{3}\big).
\end{align*}
Therefore,
\begin{equation}\label{eqn::control for A}
\begin{aligned}
\|A(e,&h)\|_{W^{1,p}\Omega^1(M)\times W^{1,p}_{\Div}(M)}\\
&\le C\omega\epsilon^{2}\|f\|_{TW^{1-1/p,p}_{\Div}(\p M)}+C\omega\delta^{2}\|(e,h)\|_{W^{1,p}\Omega^1(M)\times W^{1,p}_{\Div}(M)}.
\end{aligned}
\end{equation}
In particular, this gives
$$
\|A(e,h)\|_{W^{1,p}\Omega^1(M)\times W^{1,p}_{\Div}(M)}\le C\omega(\epsilon^{3}+\delta^{3}).
$$
Taking $\epsilon>0$ and $\delta>0$ sufficently small, below we will ensure that $A$ maps $X_\delta$ into itself.\smallskip

Next, we show that $A$ is contraction on $X_\delta$. For this we need the following technical~lemma.

\begin{Lemma}\label{lemma::product difference estimate}
Let $(M,g)$ be a compact $n$-dimensional Riemannian manifold and let $p>n$. If $u,v\in W^{1,p}\Omega^1(M)$, then
$$
\|(|u|_g^{2}u-|v|_g^{2}v)\|_{W^{1,p}\Omega^1(M)}\le C(\|u\|_{W^{1,p}\Omega^1(M)}^{2}+\|v\|_{W^{1,p}\Omega^1(M)}^{2})\|u-v\|_{W^{1,p}\Omega^1(M)}.
$$
\end{Lemma}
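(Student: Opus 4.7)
The plan is to imitate the scheme used for Lemma~\ref{lemma::product estimate}: reduce to a pointwise algebraic identity for $|u|_g^{2}u-|v|_g^{2}v$, estimate the $L^p$ and $\nabla$-parts separately, and convert $L^\infty$-norms to $W^{1,p}$-norms at the end via the Sobolev embedding $W^{1,p}\Omega^1(M)\inclusion C\Omega^1(M)$, which is available since $p>n$ and $M$ is compact.

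First, I would write the telescoping identity
$$
|u|_g^{2}u-|v|_g^{2}v=|u|_g^{2}(u-v)+(|u|_g^{2}-|v|_g^{2})v,
$$
and further expand the second factor using the bilinear structure of $\<\cdot,\cdot\>_g$ extended to complex forms, namely
$$
|u|_g^{2}-|v|_g^{2}=\<u-v,\overline u\>_g+\<v,\overline{u-v}\>_g.
$$
This makes each summand a pointwise trilinear expression in $u$, $v$, $u-v$, so that the natural product rules apply. Using the invariant expression of the $W^{1,p}\Omega^1(M)$-norm from the proof of Lemma~\ref{lemma::product estimate}, the task reduces to bounding $L^p$-norms of these trilinear expressions and of their $\nabla$-derivatives.

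Next, for each trilinear term $T(\phi_1,\phi_2,\phi_3)$ I would apply H\"older's inequality by distributing one factor in $L^p$ and the other two in $L^\infty$, and similarly for $\nabla T$ using the Leibniz rule (and the compatibility of $\nabla$ with $\<\cdot,\cdot\>_g$). Concretely, for the contribution of $|u|_g^{2}(u-v)$ one obtains
$$
\||u|_g^{2}(u-v)\|_{W^{1,p}\Omega^1(M)}\le C\|u\|_{L^\infty\Omega^1(M)}^{2}\|u-v\|_{W^{1,p}\Omega^1(M)}+C\|u\|_{L^\infty\Omega^1(M)}\|u-v\|_{L^\infty\Omega^1(M)}\|u\|_{W^{1,p}\Omega^1(M)},
$$
and analogously for $(|u|_g^{2}-|v|_g^{2})v$, producing mixed $\|u\|\|v\|\|u-v\|$ factors. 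The Sobolev embedding $W^{1,p}\inclusion C$ then converts all $L^\infty$-norms to $W^{1,p}$-norms, yielding
$$
\||u|_g^{2}u-|v|_g^{2}v\|_{W^{1,p}\Omega^1(M)}\le C\bigl(\|u\|_{W^{1,p}\Omega^1(M)}+\|v\|_{W^{1,p}\Omega^1(M)}\bigr)^{2}\|u-v\|_{W^{1,p}\Omega^1(M)}.
$$
Finally, applying the elementary inequality $(a+b)^{2}\le 2(a^{2}+b^{2})$ gives the form stated in the lemma.

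I do not anticipate a real obstacle here: the algebraic identity makes everything trilinear, and the Sobolev embedding $p>n$ controls all the $L^\infty$ factors exactly as in Lemma~\ref{lemma::product estimate}. The only mild care is to handle the complex-valued forms correctly in the $|u|_g^{2}-|v|_g^{2}$ expansion (where $|u|_g^{2}=\<u,\overline u\>_g$), which is why the expansion is written with a conjugate on one of the arguments rather than by a real factorization.
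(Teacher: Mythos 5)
Your argument is correct and is in substance the paper's own proof: the paper likewise reduces to the invariant expression of the $W^{1,p}$-norm, splits the quantity into an $L^p$-part and two $\nabla$-parts, bounds each resulting trilinear term by two $L^\infty$-factors times one $L^p$-factor involving $u-v$ or $\nabla(u-v)$, and closes with the Sobolev embedding $W^{1,p}\Omega^1(M)\inclusion C\Omega^1(M)$ for $p>n$. The only difference is cosmetic: where you use the telescoping identity $|u|_g^{2}u-|v|_g^{2}v=|u|_g^{2}(u-v)+(|u|_g^{2}-|v|_g^{2})v$, the paper sets $w_\theta=u+\theta(v-u)$ and writes the difference as $\int_0^1\p_\theta\{|w_\theta|_g^{2}w_\theta\}\,d\theta$, which generates the same trilinear terms.
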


Assuming this result, we continue the proof of Theorem~\ref{thm::main result 1}. Using Lemma~\ref{lemma::product difference estimate}, we also can show that for all $(e_1,h_1),(e_2,h_2)\in X_\delta$
\begin{align*}
\|A&(e_1,h_1)-A(e_2,h_2)\|_{W^{1,p}\Omega^1(M)\times W^{1,p}_{\Div}(M)}\\
&\le C\omega\|\,|E_0+e_1|_g^{2}(E_0+e_1)-|E_0+e_2|_g^{2}(E_0+e_2)\|_{W^{1,p}\Omega^1(M)}\\
&\quad+C\omega\|\,|H_0+h_1|_g^{2}(H_0+h_1)-|H_0+h_2|_g^{2}(H_0+h_2)\|_{W^{1,p}\Omega^1(M)}\\
&\le C\omega(\|E_0+e_1\|_{W^{1,p}\Omega^1(M)}^{2}+\|E_0+e_2\|_{W^{1,p}\Omega^1(M)}^{2})\|e_1-e_2\|_{W^{1,p}\Omega^1(M)}\\
&\quad+C\omega(\|H_0+h_1\|_{W^{1,p}\Omega^1(M)}^{2}+\|H_0+h_2\|_{W^{1,p}\Omega^1(M)}^{2})\|h_1-h_2\|_{W^{1,p}\Omega^1(M)}\\
&\le C\omega(\|E_0\|_{W^{1,p}\Omega^1(M)}^{2}+\|e_1\|_{W^{1,p}\Omega^1(M)}^{2}+\|e_2\|_{W^{1,p}\Omega^1(M)}^{2})\|e_1-e_2\|_{W^{1,p}\Omega^1(M)}\\
&\quad+C\omega(\|H_0\|_{W^{1,p}\Omega^1(M)}^{2}+\|h_1\|_{W^{1,p}\Omega^1(M)}^{2}+\|h_2\|_{W^{1,p}\Omega^1(M)}^{2})\|h_1-h_2\|_{W^{1,p}\Omega^1(M)}\\
&\le C\omega(\epsilon^{2}+\delta^{2})\big(\|e_1-e_2\|_{W^{1,p}\Omega^1(M)}+\|h_1-h_2\|_{W^{1,p}\Omega^1(M)}\big).
\end{align*}
These imply that $A$ is contraction on~$X_\delta$, if $C\omega(\epsilon^{3}+\delta^{3})\le \delta$ and $C\omega(\epsilon^{2}+\delta^{2})<1$. Now, using the contraction mapping theorem, we find a unique $(E',H')\in X_\delta$ 
such that $A(E',H')=(E',H')$ and hence solving \eqref{eqn::modified nonlinear Maxwell equation with zero boundary data}. Using $A(E',H')=(E',H')$ in \eqref{eqn::control for A} and taking $\delta>0$ sufficently small, one can see that $(E',H')$ satisfies the estimate
$$
\|E'\|_{W^{1,p}\Omega^1(M)}+\|H'\|_{W^{1,p}_{\Div}(M)}\le C\|f\|_{TW^{1-1/p,p}_{\Div}(\p M)}.
$$
Finally, $(E,H)=(E_0,H_0)+(E',H')$ solves \eqref{eqn::Maxwell} with $\mathbf t(E)=f$ and satisfies the estimate
$$
\|E\|_{W^{1,p}_{\Div}(M)}+\|H\|_{W^{1,p}_{\Div}(M)}\le C\|f\|_{TW^{1-1/p,p}_{\Div}(\p M)}.
$$
The proof of Theorem~\ref{thm::main result 1} is thus complete.\smallskip

We emphasize that the requirement $3< p\leq 6$ is due to the Sobolev embedding theorem (see \cite[Theorem~1.3.6 (a)]{Schwarz1995}) used in Lemma \ref{lemma::product estimate}, Lemma \ref{lemma::product difference estimate} and Theorem \ref{thm::well posedness new version homogeneous}.

\begin{proof}[Proof of Lemma~\ref{lemma::product difference estimate}]Recall  that the $W^{1,p}\Omega^m(M)$-norm may be expressed invariantly as
$$
\|f\|_{W^{1,p}\Omega^1(M)}=\|f\|_{L^p\Omega^1(M)}+\|\,|\nabla f|_{g}\|_{L^p(M)},
$$
where $\nabla$ is the Levi-Civita connection defined on tensors on $M$ and $|T|_g$ is the norm of a tensor $T$ on $M$ with respect to the metric $g$.\smallskip

By density of $C^\infty\Omega^1(M)$ in $W^{1,p}\Omega^1(M)$, it is enough to assume that $u,v\in C^\infty\Omega^1(M)$. Recall that
$$
\|(|u|_g^{2}u-|v|_g^{2}v)\|_{W^{1,p}\Omega^1(M)}=\||u|_g^{2}u-|v|_g^{2}v\|_{L^p\Omega^1(M)}+\|\,|\nabla\big(|u|_g^{2}u-|v|_g^{2}v\big)|_{g}\|_{L^p(M)}.
$$
We can write
$$
\nabla\big(|u|_g^{2}u-|v|_g^{2}v\big)=|u|_g^{2}\nabla u-|v|_g^{2}\nabla v+2\Re\<u,\nabla\overline{u}\>_g u-2\Re\<v,\nabla\overline{v}\>_g v.
$$
Therefore,
\begin{equation}\label{eqn::w1p norm of fuu-fvv}
\begin{aligned}
\|(|u|_g^{2}u-|v|_g^{2}v)\|_{W^{1,p}\Omega^1(M)}&\le C\||u|_g^{2}u-|v|_g^{2}v\|_{L^p\Omega^1(M)}+C\|\,|(|u|_g^{2}\nabla u-|v|_g^{2}\nabla v)|_g\|_{L^p(M)}\\
&\qquad+C\|\,|(\Re\<u,\nabla\overline{u}\>_g u-\Re\<v,\nabla\overline{v}\>_g v)|_g\|_{L^p(M)}.
\end{aligned}
\end{equation}
Write $w_\theta=u+\theta(v-u)$. Let us estimate the first term on the right hand-side of~\eqref{eqn::w1p norm of fuu-fvv}. Then
$$
|v|_g^{2}v-|u|_g^{2}u=\int_0^1\frac{\p}{\p \theta}\big\{|w_\theta|_g^{2}w_\theta\big\}\,d\theta=\int_0^1 \{2\Re\<w_\theta,v-u\>_g\,w_\theta+|w_\theta|_g^{2}(v-u)\}\,d\theta
$$
and hence
$$
|(|u|_g^{2}u-|v|_g^{2}v)|_g\le C(\|u\|_{L^\infty\Omega^1(M)}^{2}+\|v\|_{L^\infty\Omega^1(M)}^{2})|u-v|_g.
$$
Therefore, using the Sobolev embedding $W^{1,p}\Omega^1(M)\inclusion C\Omega^1(M)$ as in Lemma~\ref{lemma::product estimate}, we get for $p>n$
\begin{align*}
\||u|_g^{2}u-|v|_g^{2}v\|_{L^p\Omega^1(M)}&\le C(\|u\|_{L^\infty\Omega^1(M)}^{2}+\|v\|_{L^\infty\Omega^1(M)}^{2}) \|u-v\|_{L^p\Omega^1(M)}\\
&\le C(\|u\|_{W^{1,p}\Omega^1(M)}^{2}+\|v\|_{W^{1,p}\Omega^1(M)}^{2}) \|u-v\|_{L^p\Omega^1(M)}.
\end{align*}
Now, we estimate the second term on the right hand-side of \eqref{eqn::w1p norm of fuu-fvv}. Similarly as before, we can show
\begin{align*}
|v|_g^{2}\nabla v-|u|_g^{2}\nabla u&=\int_0^1\frac{\p}{\p \theta}\big\{|w_\theta|_g^{2}\,\nabla w_\theta\big\}\,d\theta=\int_0^1 \{2\Re\<w_\theta,\overline v-\overline u\>_g\,\nabla w_\theta+|w_\theta|_g^{2}\nabla(v-u)\}\,d\theta.
\end{align*}
Then
\begin{align*}
|(|u|_g^{2}\nabla u-|v|_g^{2}\nabla v)|_g&\le C(\|u\|_{L^\infty\Omega^1(M)}+\|v\|_{L^\infty\Omega^1(M)})\|u-v\|_{L^\infty\Omega^1(M)}(|\nabla u|_g+|\nabla v|_g)\\
&\quad+C(\|u\|_{L^\infty\Omega^1(M)}^{2}+\|v\|_{L^\infty\Omega^1(M)}^{2})|\nabla(u-v)|_g.
\end{align*}
Therefore, using the Sobolev embedding $W^{1,p}\Omega^1(M)\inclusion C\Omega^1(M)$, we get
\begin{align*}
\|\,|(|u|_g^{2}\nabla u&-|v|_g^{2}\nabla v)|_g\|_{L^p(M)}\le C(\|u\|_{W^{1,p}\Omega^1(M)}^{2}+\|v\|_{W^{1,p}\Omega^1(M)}^{2})\|u-v\|_{W^{1,p}\Omega^1(M)}.
\end{align*}
Finally, we estimate the last term on the right hand-side of \eqref{eqn::w1p norm of fuu-fvv}. For this, we write
\begin{align*}
\Re\<v,\nabla\overline{v}\>_g v&-\Re\<u,\nabla\overline{u}\>_gu=\int_0^1\frac{\p}{\p \theta}\big\{\Re\<w_\theta,\nabla\overline w_\theta\>_g\,w_\theta\big\}\,d\theta\\
&=\int_0^1 \{\big(\Re\<v-u,\nabla\overline w_\theta\>_g+\Re\<w_\theta,\nabla(\overline v-\overline u)\>_g\big)\,w_\theta+\Re\<w_\theta,\nabla\overline w_\theta\>_g(v-u)\}\,d\theta.
\end{align*}
Therefore,
\begin{multline*}
|(\Re\<u,\nabla\overline{u}\>_g u-\Re\<v,\nabla\overline{v}\>_g v)|_g\\
\le C(\|u\|_{L^\infty\Omega^1(M)}+\|v\|_{L^\infty\Omega^1(M)})\|u-v\|_{L^\infty\Omega^1(M)}(|\nabla u|_g+|\nabla v|_g)\\
\quad+C(\|u\|_{L^\infty\Omega^1(M)}^{2}+\|v\|_{L^\infty\Omega^1(M)}^{2})|\nabla(u-v)|_g.
\end{multline*}
Using Sobolev embedding $W^{1,p}\Omega^1(M)\inclusion C\Omega^1(M)$, this implies that
$$
\|\,|(\Re\<u,\nabla\overline{u}\>_g u-\Re\<v,\nabla\overline{v}\>_g v)|_g\|_{L^p(M)}\le C(\|u\|_{W^{1,p}\Omega^1(M)}^{2}+\|v\|_{W^{1,p}\Omega^1(M)}^{2})\|u-v\|_{W^{1,p}\Omega^1(M)}.
$$
Combining all these three estimates with \eqref{eqn::w1p norm of fuu-fvv}, we finish the proof.
\end{proof}


\section{Asymptotics of the admittance map}\label{sec:asymp}
Let $(M,g)$ be a compact $3$-dimensional Riemannian manifold with smooth boundary. Suppose that $\varepsilon,\mu\in C^1(M)$ are complex functions with positive real parts and $a,b\in C^1(M)$. Let $m\ge 1$ be an integer and let $3<p\le 6$. Fix $\omega>0$ outside a discrete set of resonant frequencies. Suppose that $f\in TW^{1-1/p,p}_{\Div}(\p M)$ and $s\in\R$ is a small parameter. By Theorem~\ref{thm::main result 1}, there is a unique solution $(E^{(s)},H^{(s)})\in W^{1,p}_{\Div}(M)\times W^{1,p}_{\Div}(M)$ of \eqref{eqn::Maxwell} such that $\mathbf t(E^{(s)})=sf$ and
\begin{equation}\label{eqn::estimate for E^s and H^s}
\|E^{(s)}\|_{W^{1,p}_{\Div}(M)}+\|H^{(s)}\|_{W^{1,p}_{\Div}(M)}\le C|s|\,\|f\|_{TW^{1-1/p,p}_{\Div}(\p M)}.
\end{equation}
By Theorem~\ref{thm::well posedness new version homogeneous}, there is a unique $(E_1,H_1)\in W^{1,p}_{\Div}(M)\times W^{1,p}_{\Div}(M)$ solving \eqref{eqn::Maxwell homogeneous in appendix} with $\mathbf t(E_1)=f$ such that
\begin{equation}\label{eqn::estimate for E_1 and H_1}
\|E_1\|_{W^{1,p}_{\Div}(M)}+\|H_1\|_{W^{1,p}_{\Div}(M)}\le C\|f\|_{TW^{1-1/p,p}_{\Div}(\p M)}.
\end{equation}
Also, by Theorem~\ref{thm::well posedness new version} there is a unique solution $(E_2,H_2)\in W^{1,p}_{D}(M)\times W^{1,p}_{\Div}(M)$ for
$$
*dE_2=i\omega\mu H_2+i\omega b|H_1|_g^{2}H_1,\quad *dH_2=-i\omega \varepsilon E_2-i\omega a|E_1|_g^{2}E_1
$$
and satisfying
$$
\|E_2\|_{W^{1,p}_{\Div}(M)}+\|H_2\|_{W^{1,p}_{\Div}(M)}\le C\|\,|E_1|_g^{2}E_1\|_{W^{1,p}\Omega^1(M)}+C\|\,|H_1|_g^{2}H_1\|_{W^{1,p}\Omega^1(M)}.
$$
Then by Lemma~\ref{lemma::product estimate},
\begin{equation}\label{eqn::estimate for E_2 and H_2}
\begin{aligned}
\|E_2\|_{W^{1,p}_{\Div}(M)}+\|H_2\|_{W^{1,p}_{\Div}(M)}&\le C\|E_1\|^{3}_{W^{1,p}\Omega^1(M)}+C\|H_1\|^{3}_{W^{1,p}\Omega^1(M)}\\
&\le C\|E_1\|^{3}_{W^{1,p}_{\Div}(M)}+C\|H_1\|^{3}_{W^{1,p}_{\Div}(M)} \le C\|f\|_{TW^{1-1/p,p}_{\Div}(\p M)}^{3}.
\end{aligned}
\end{equation}
Now we define $(F^{(s)},G^{(s)})$ by
\begin{equation}\label{eqn::definition of F^s and G^s}
(E^{(s)},H^{(s)})=s(E_1+s^{2}F^{(s)},H_1+s^{2}G^{(s)}).
\end{equation}
Then by \eqref{eqn::estimate for E^s and H^s} and \eqref{eqn::estimate for E_1 and H_1}, $(F^{(s)},G^{(s)})$ satisfies
\begin{align*}
|s|^{3}\|F^{(s)}\|_{W^{1,p}_{\Div}(M)}&+|s|^{3}\|G^{(s)}\|_{W^{1,p}_{\Div}(M)}\\
&\le \|E^{(s)}\|_{W^{1,p}_{\Div}(M)}+\|H^{(s)}\|_{W^{1,p}_{\Div}(M)}+|s|\,\|E_1\|_{W^{1,p}_{\Div}(M)}+|s|\,\|H_1\|_{W^{1,p}_{\Div}(M)}\\
&\le C|s|\,\|f\|_{TW^{1-1/p,p}_{\Div}(\p M)}.
\end{align*}
Therefore,
\begin{equation}\label{eqn::estimate for F^s and G^s}
|s|^{2}\,\|F^{(s)}\|_{W^{1,p}_{\Div}(M)}+|s|^{2}\,\|G^{(s)}\|_{W^{1,p}_{\Div}(M)}\le C\|f\|_{TW^{1-1/p,p}_{\Div}(\p M)}.
\end{equation}

\begin{Lemma}\label{lemma::estimate for F^s-E_2 and G^s-H_2}
Suppose that $f\in TW^{1-1/p,p}_{\Div}(\p M)$. There is $s_0>0$ and there is $C_f>0$ depending on $f$, $\omega$ and $s_0$, but independent of $s$, such that for all $s\in \R$ with $|s|<s_0$,
$$
\|F^{(s)}-E_2\|_{W^{1,p}_{\Div}(M)}+\|G^{(s)}-H_2\|_{W^{1,p}_{\Div}(M)}\le C_f { |s|^2}.
$$
In particular,
\begin{equation}\label{eqn::estimate for F^s and G^s stronger}
\|F^{(s)}\|_{W^{1,p}_{\Div}(M)}+\|G^{(s)}\|_{W^{1,p}_{\Div}(M)}\le C_f.
\end{equation}
\end{Lemma}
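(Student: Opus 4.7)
The plan is to derive a linear Maxwell system satisfied by the difference $(F^{(s)} - E_2, G^{(s)} - H_2)$, apply Theorem~\ref{thm::well posedness new version} to control it by the norm of its source terms, invoke Lemma~\ref{lemma::product difference estimate} to estimate the difference of the nonlinear contributions, and close via a single absorption step using the a priori bound~\eqref{eqn::estimate for F^s and G^s}.

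First, I would substitute $(E^{(s)}, H^{(s)}) = s(E_1 + s^2 F^{(s)}, H_1 + s^2 G^{(s)})$ into the nonlinear system~\eqref{eqn::Maxwell}, cancel the linear terms using $*dE_1 = i\omega \mu H_1$ and $*dH_1 = -i\omega \varepsilon E_1$, pull out the common factor $s^3$ using $|sH_1 + s^3 G^{(s)}|_g^2 = s^2|H_1 + s^2 G^{(s)}|_g^2$, and divide by $s^3$. This produces
\begin{align*}
*dF^{(s)} &= i\omega \mu G^{(s)} + i\omega b\,|H_1 + s^2 G^{(s)}|_g^{2}(H_1 + s^2 G^{(s)}),\\
*dG^{(s)} &= -i\omega \varepsilon F^{(s)} - i\omega a\,|E_1 + s^2 F^{(s)}|_g^{2}(E_1 + s^2 F^{(s)}),
\end{align*}
with $\mathbf t(F^{(s)}) = 0$ (since $\mathbf t(E^{(s)}) = sf = s\,\mathbf t(E_1)$). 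Subtracting the defining system for $(E_2, H_2)$, the difference $(F^{(s)} - E_2, G^{(s)} - H_2)$ solves a linear non-homogeneous Maxwell system as in~\eqref{eqn::Maxwell in appendix} with zero tangential electric trace and with right-hand sides
\begin{align*}
J_m^{(s)} &= i\omega b\bigl(|H_1+s^2 G^{(s)}|_g^{2}(H_1+s^2 G^{(s)}) - |H_1|_g^{2} H_1\bigr),\\
J_e^{(s)} &= i\omega a\bigl(|E_1+s^2 F^{(s)}|_g^{2}(E_1+s^2 F^{(s)}) - |E_1|_g^{2} E_1\bigr).
\end{align*}

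Next, since $p > 3 = n$, Lemma~\ref{lemma::product difference estimate} yields
\[
\|J_e^{(s)}\|_{W^{1,p}\Omega^1(M)} \le C_\omega\bigl(\|E_1 + s^2 F^{(s)}\|_{W^{1,p}\Omega^1(M)}^{2} + \|E_1\|_{W^{1,p}\Omega^1(M)}^{2}\bigr)\cdot s^{2}\|F^{(s)}\|_{W^{1,p}_{\Div}(M)},
\]
and analogously for $J_m^{(s)}$. The bounds~\eqref{eqn::estimate for E_1 and H_1} and~\eqref{eqn::estimate for F^s and G^s} ensure $\|E_1 + s^2 F^{(s)}\|_{W^{1,p}\Omega^1(M)} \le C\|f\|_{TW^{1-1/p,p}_{\Div}(\p M)}$, so the prefactor is uniformly bounded by $C\|f\|^{2}_{TW^{1-1/p,p}_{\Div}(\p M)}$. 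Because $W^{1,p}\Omega^1(M) \inclusion W^p_\delta\Omega^1(M)$ and the normal trace of a $W^{1,p}$ one-form lies in $W^{1-1/p,p}(\p M)$, Theorem~\ref{thm::well posedness new version} applies to the difference system and gives
\[
\|F^{(s)} - E_2\|_{W^{1,p}_{\Div}(M)} + \|G^{(s)} - H_2\|_{W^{1,p}_{\Div}(M)} \le C_\omega s^{2} \|f\|^{2}\bigl(\|F^{(s)}\|_{W^{1,p}_{\Div}(M)} + \|G^{(s)}\|_{W^{1,p}_{\Div}(M)}\bigr),
\]
where the norm of $f$ is taken in $TW^{1-1/p,p}_{\Div}(\p M)$.

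The final step closes the estimate by absorption. Combining this inequality with the triangle inequality and the bound $\|E_2\|_{W^{1,p}_{\Div}(M)} + \|H_2\|_{W^{1,p}_{\Div}(M)} \le C\|f\|_{TW^{1-1/p,p}_{\Div}(\p M)}^{3}$ from~\eqref{eqn::estimate for E_2 and H_2} yields
\[
(1 - C_\omega s^{2}\|f\|^{2})\bigl(\|F^{(s)}\|_{W^{1,p}_{\Div}(M)} + \|G^{(s)}\|_{W^{1,p}_{\Div}(M)}\bigr) \le C\|f\|_{TW^{1-1/p,p}_{\Div}(\p M)}^{3}.
\]
Choosing $s_0>0$ small enough that $C_\omega s_0^{2}\|f\|^{2} \le 1/2$ establishes the uniform bound~\eqref{eqn::estimate for F^s and G^s stronger}, and substituting it back into the previous display delivers $\|F^{(s)} - E_2\|_{W^{1,p}_{\Div}(M)} + \|G^{(s)} - H_2\|_{W^{1,p}_{\Div}(M)} \le C_f |s|^{2}$ as required. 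The main delicate point is the apparent circularity of the estimate—the size of $(F^{(s)}-E_2, G^{(s)}-H_2)$ is controlled only in terms of $(F^{(s)}, G^{(s)})$ themselves through the cubic nonlinearity—but the explicit $s^{2}\|f\|^{2}$ prefactor is precisely what allows the absorption step to simultaneously produce both the uniform bound and the $O(s^{2})$ convergence rate.
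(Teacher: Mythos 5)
Your proof is correct and follows essentially the same route as the paper: identify the linear non-homogeneous Maxwell system solved by $(F^{(s)}-E_2,\,G^{(s)}-H_2)$, bound the source via Lemma~\ref{lemma::product difference estimate} together with the a priori estimates \eqref{eqn::estimate for E_1 and H_1} and \eqref{eqn::estimate for F^s and G^s}, apply Theorem~\ref{thm::well posedness new version}, and close by the reverse triangle inequality and absorption using \eqref{eqn::estimate for E_2 and H_2}. The only difference is notational — the paper factors the sources as $i\omega s\,e^{(s)}$, $i\omega s\,h^{(s)}$ with an $s^{-1}$ built into $e^{(s)},h^{(s)}$, whereas you keep them as $J_e^{(s)},J_m^{(s)}$ — which changes nothing in substance.
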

\begin{proof}
Set $(P^{(s)},Q^{(s)})=(F^{(s)},G^{(s)})-(E_2,H_2)$. Then it is easy to see that
\begin{equation}\label{eqn::Maxwell in asymptotic expansion}
*dP^{(s)}=i\omega\mu Q^{(s)}+i\omega sh^{(s)},\quad
*dQ^{(s)}=-i\omega\varepsilon P^{(s)}-i\omega se^{(s)},\quad
\mathbf t(P^{(s)})=0,
\end{equation}
where
\begin{align*}
e^{(s)}&=s^{-1}{ a}\big(|E_1+s^{2}F^{(s)}|_g^{2}(E_1+s^{2}F^{(s)})-|E_1|^{2}_gE_1\big),\\
h^{(s)}&=s^{-1}{ b}\big(|H_1+s^{2}G^{(s)}|_g^{2}(H_1+s^{2}G^{(s)})-|H_1|^{2}_gH_1\big).
\end{align*}
Using  Lemma~\ref{lemma::product difference estimate},
\begin{align*}
\|e^{(s)}&\|_{W^{1,p}\Omega^1(M)}\le C{ |s|}\left[\|E_1\|_{W^{1,p}\Omega^1(M)}^{2}+(|s|^{2}\|F^{(s)}\|_{W^{1,p}\Omega^1(M)})^{2}\right]\|F^{(s)}\|_{W^{1,p}\Omega^1(M)},\\
\|h^{(s)}&\|_{W^{1,p}\Omega^1(M)}\le C{ |s|}\left[\|H_1\|_{W^{1,p}\Omega^1(M)}^{2}+(|s|^{2}\|G^{(s)}\|_{W^{1,p}\Omega^1(M)})^{2}\right]\|G^{(s)}\|_{W^{1,p}\Omega^1(M)}.
\end{align*}
Then by \eqref{eqn::estimate for E_1 and H_1} and \eqref{eqn::estimate for F^s and G^s},
\begin{align*}
\|e^{(s)}\|_{W^{1,p}\Omega^1(M)}+\|h^{(s)}&\|_{W^{1,p}\Omega^1(M)} \le C{ |s|}\|f\|_{TW^{1-1/p,p}_{\Div}(\p M)}^{2}(\|F^{(s)}\|_{W^{1,p}_{\Div}(M)}+\|G^{(s)}\|_{W^{1,p}_{\Div}(M)}).
\end{align*}
By Theorem~\ref{thm::well posedness new version}, $(P^{(s)},Q^{(s)})=(F^{(s)},G^{(s)})-(E_2,H_2)$ is the unique solution of \eqref{eqn::Maxwell in asymptotic expansion} and satisfies the estimate
\begin{align*}
\|F^{(s)}-E_2\|_{W^{1,p}_{\Div}(M)}+\|G^{(s)}-H_2\|_{W^{1,p}_{\Div}(M)}&=\|P^{(s)}\|_{W^{1,p}_{\Div}(M)}+\|Q^{(s)}\|_{W^{1,p}_{\Div}(M)}\\
&\le C\omega |s|\,(\|e^{(s)}\|_{W^{1,p}\Omega^1(M)}+\|h^{(s)}\|_{W^{1,p}\Omega^1(M)}).
\end{align*}
Therefore,
\begin{equation}\label{eqn::estimate for F^s-E_2 and G^s-H_2 intermediate}
\begin{aligned}
\|F^{(s)}-E_2\|&_{W^{1,p}_{\Div}(M)}+\|G^{(s)}-H_2\|_{W^{1,p}_{\Div}(M)}\\
&\le C\|f\|_{TW^{1-1/p,p}_{\Div}(\p M)}^{2}\omega { |s|^2}\,(\|F^{(s)}\|_{W^{1,p}_{\Div}(M)}+\|G^{(s)}\|_{W^{1,p}_{\Div}(M)}).
\end{aligned}
\end{equation}
Using the reverse triangle inequality to the left hand-side, we get
\begin{multline*}
\|F^{(s)}\|_{W^{1,p}_{\Div}(M)}+\|G^{(s)}\|_{W^{1,p}_{\Div}(M)}\\
\le C\|f\|_{TW^{1-1/p,p}_{\Div}(\p M)}^{2}\omega { |s|^2}\,(\|F^{(s)}\|_{W^{1,p}_{\Div}(M)}+\|G^{(s)}\|_{W^{1,p}_{\Div}(M)})\\
\qquad+\|E_2\|_{W^{1,p}_{\Div}(M)}+\|H_2\|_{W^{1,p}_{\Div}(M)}.
\end{multline*}
Using \eqref{eqn::estimate for E_2 and H_2}, this gives
\begin{align*}
\|&F^{(s)}\|_{W^{1,p}_{\Div}(M)}+\|G^{(s)}\|_{W^{1,p}_{\Div}(M)}\\
&\le C\|f\|_{TW^{1-1/p,p}_{\Div}(\p M)}^{2}\omega { |s|^2}\,(\|F^{(s)}\|_{W^{1,p}_{\Div}(M)}+\|G^{(s)}\|_{W^{1,p}_{\Div}(M)})+C\|f\|_{TW_{\Div}^{1-1/p,p}(\p M)}^{3}.
\end{align*}
The first term in the last line can be absorbed into the left hand-side by taking sufficiently small $s_0>0$ so that
$$
C\|f\|_{TW^{1-1/p,p}_{\Div}(\p M)}^{2}\omega { |s_0|^2}<1/2.
$$
Then we obtain
$$
\|F^{(s)}\|_{W^{1,p}_{\Div}(M)}+\|G^{(s)}\|_{W^{1,p}_{\Div}(M)}\le { C\|f\|_{TW_{\Div}^{1-1/p,p}(\p M)}^{3}}.
$$
Substituting this into \eqref{eqn::estimate for F^s-E_2 and G^s-H_2 intermediate}, we arrive to the desired estimate.
\end{proof}

{ Denote by $\Lambda^\omega_{\varepsilon,\mu}$ the admittance map $\Lambda^\omega_{\varepsilon,\mu,0,0}$ for linear Maxwell's equations.  We obtain the following asymptotic expansion of the admittance map.}
\begin{Proposition}\label{prop::asymptotics of the admittance map}
Suppose that $f\in TW^{1-1/p,p}_{\Div}(\p M)$ with $3<p\leq 6$. Then
\begin{align}
s^{-1}[\Lambda^\omega_{\varepsilon,\mu,a,b}(sf)-s\Lambda^\omega_{\varepsilon,\mu}(f)]\to 0\quad&\text{in}\quad W^{1-1/p,p}_{\Div}(\p M)\quad\text{as}\quad s\to 0,\label{eqn::1st term in asymptotics of the admittance map}\\
s^{-3}[\Lambda^\omega_{\varepsilon,\mu,a,b}(sf)-s\Lambda^\omega_{\varepsilon,\mu}(f)]\to \mathbf t(H_2)\quad&\text{in}\quad W^{1-1/p,p}_{\Div}(\p M)\quad\text{as}\quad s\to 0.\label{eqn::2nd term in asymptotics of the admittance map}
\end{align}
\end{Proposition}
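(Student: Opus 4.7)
The plan is to exploit the decomposition $(E^{(s)},H^{(s)})=s(E_1+s^{2}F^{(s)},H_1+s^{2}G^{(s)})$ introduced in \eqref{eqn::definition of F^s and G^s} together with Lemma~\ref{lemma::estimate for F^s-E_2 and G^s-H_2}. Both asymptotics of $\Lambda^\omega_{\varepsilon,\mu,a,b}(sf)-s\Lambda^\omega_{\varepsilon,\mu}(f)$ should reduce to statements about $G^{(s)}$ only.

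First, I would unwind the definitions of the admittance maps. By Theorem~\ref{thm::main result 1}, $\Lambda^\omega_{\varepsilon,\mu,a,b}(sf)=\mathbf t(H^{(s)})$, while by Theorem~\ref{thm::well posedness new version homogeneous}, $\Lambda^\omega_{\varepsilon,\mu}(f)=\mathbf t(H_1)$. Applying $\mathbf t$ to the magnetic component of \eqref{eqn::definition of F^s and G^s} gives
$$
\Lambda^\omega_{\varepsilon,\mu,a,b}(sf)-s\Lambda^\omega_{\varepsilon,\mu}(f)=\mathbf t(H^{(s)})-s\mathbf t(H_1)=s^{3}\mathbf t(G^{(s)}).
$$

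Second, I would invoke the boundedness of the tangential trace operator $\mathbf t:W^{1,p}_{\Div}(M)\to TW^{1-1/p,p}_{\Div}(\p M)$, which follows from the discussion in Section~\ref{section::extensions of t and t i_nu} together with the definition of the $\Div$-norms. Combined with the uniform bound \eqref{eqn::estimate for F^s and G^s stronger} from Lemma~\ref{lemma::estimate for F^s-E_2 and G^s-H_2}, this yields
$$
\|s^{-1}[\Lambda^\omega_{\varepsilon,\mu,a,b}(sf)-s\Lambda^\omega_{\varepsilon,\mu}(f)]\|_{TW^{1-1/p,p}_{\Div}(\p M)}=|s|^{2}\|\mathbf t(G^{(s)})\|_{TW^{1-1/p,p}_{\Div}(\p M)}\le C|s|^{2}C_f,
$$
which tends to $0$ as $s\to 0$, proving \eqref{eqn::1st term in asymptotics of the admittance map}.

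Third, for \eqref{eqn::2nd term in asymptotics of the admittance map}, the identity from the first step gives
$$
s^{-3}[\Lambda^\omega_{\varepsilon,\mu,a,b}(sf)-s\Lambda^\omega_{\varepsilon,\mu}(f)]-\mathbf t(H_2)=\mathbf t(G^{(s)}-H_2),
$$
and applying the boundedness of $\mathbf t$ once more together with the convergence rate $\|G^{(s)}-H_2\|_{W^{1,p}_{\Div}(M)}\le C_f|s|^{2}$ provided by Lemma~\ref{lemma::estimate for F^s-E_2 and G^s-H_2} finishes the argument. The entire proposition is essentially a corollary of Lemma~\ref{lemma::estimate for F^s-E_2 and G^s-H_2}; the hard work (the nonlinear fixed-point analysis and control of the remainder in terms of $|s|^2$) has already been done there, so here there is no substantial obstacle, only careful bookkeeping of the trace boundedness in the $\Div$-scale.
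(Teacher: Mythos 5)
Your argument is correct and is essentially identical to the paper's own proof: both derive the identity $\Lambda^\omega_{\varepsilon,\mu,a,b}(sf)-s\Lambda^\omega_{\varepsilon,\mu}(f)=s^{3}\mathbf t(G^{(s)})$ from \eqref{eqn::definition of F^s and G^s} and then conclude via boundedness of the tangential trace together with \eqref{eqn::estimate for F^s and G^s stronger} and the rate $\|G^{(s)}-H_2\|_{W^{1,p}_{\Div}(M)}\le C_f|s|^{2}$ from Lemma~\ref{lemma::estimate for F^s-E_2 and G^s-H_2}. No gaps.
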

\begin{proof}
From \eqref{eqn::definition of F^s and G^s} we have
$$
\Lambda^\omega_{\varepsilon,\mu,a,b}(sf)-s\Lambda^\omega_{\varepsilon,\mu}(f)=\mathbf t(H^{(s)})-s\mathbf t(H_1)=s^{3}\mathbf t(G^{(s)}).
$$
Then by boundedness of $\mathbf t$ from $W^{1,p}_{\Div}(M)$ onto $TW^{1-1/p,p}_{\Div}(\p M)$ and by \eqref{eqn::estimate for F^s and G^s stronger},
$$
\|s^{-1}[\Lambda^\omega_{\varepsilon,\mu,a,b}(sf)-s\Lambda^\omega_{\varepsilon,\mu}(f)]\|_{TW^{1-1/p,p}_{\Div}(\p M)}\le C|s|^{2}\,\|G^{(s)}\|_{W^{1,p}_{\Div}(M)}\le C_f|s|^{2}.
$$
Taking $s\to 0$, this implies \eqref{eqn::1st term in asymptotics of the admittance map}.\smallskip

Now, by boundedness of $\mathbf t$ from $W^{1,p}_{\Div}(M)$ onto $TW^{1-1/p,p}_{\Div}(\p M)$ and by Lemma~\ref{lemma::estimate for F^s-E_2 and G^s-H_2},
$$
\|s^{-3}[\Lambda^\omega_{\varepsilon,\mu,a,b}(sf)-s\Lambda^\omega_{\varepsilon,\mu}(f)]-\mathbf t(H_2)\|_{TW^{1-1/p,p}_{\Div}(\p M)}\le C\|G^{(s)}-H_2\|_{W^{1,p}_{\Div}(M)}\le C_f{ |s|^2}.
$$
Taking $s\to 0$, this implies \eqref{eqn::2nd term in asymptotics of the admittance map}.
\end{proof}

\section{Proof of Theorem~\ref{thm::main result 2}: Part I}\label{sec:mueps}
In this section we show that the material parameters and electric and magnetic susceptibilities of the nonlinear time-harmonic Maxwell equation \eqref{eqn::Maxwell} can be uniquely determined from the knowledge of admittance map.\smallskip

Let $(M,g)$ be a $3$-dimensional admissible manifold, that is $(M,g)\subset\subset \R\times (M_0,g_0)$ with $g=c(e\oplus g_0)$, where $c>0$ is a smooth function on $M$ and $(M_0,g_0)$ is a simple manifold of dimension two.\smallskip

The first ingredient in the proof of Theorem~\ref{thm::main result 2} is the reduction to the case $c=1$.
\begin{Lemma}
Let $(M,g)$ be a compact Riemannian $3$-dimensional manifold with boundary and let $c>0$ be a smooth function on $M$. Suppose that $\varepsilon,\mu\in C^\infty(M)$ with positive real parts and $a,b\in C^\infty(M)$. Then $\Lambda^\omega_{cg,\varepsilon,\mu,a,b}=\Lambda^\omega_{g,c^{1/2}\varepsilon,c^{1/2}\mu,c^{3/2}a,c^{3/2}b}$.
\end{Lemma}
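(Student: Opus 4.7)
The plan is to reduce the equality of admittance maps to a pointwise PDE equivalence and then to verify the latter by a direct conformal rescaling computation. Since the tangential trace $\mathbf t = \imath^*$ is intrinsic to the smooth structure of $M$ and does not see the Riemannian metric, and $\Lambda^\omega$ is defined by $f \mapsto \mathbf t(H)$ where $(E,H)$ solves Maxwell with $\mathbf t(E) = f$, it suffices to show that a pair $(E,H)$ of complex $1$-forms solves the nonlinear system \eqref{eqn::Maxwell} on $(M, cg)$ with parameters $(\varepsilon, \mu, a, b)$ if and only if the same pair solves \eqref{eqn::Maxwell} on $(M, g)$ with parameters $(c^{1/2}\varepsilon,\, c^{1/2}\mu,\, c^{3/2}a,\, c^{3/2}b)$.

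The equivalence follows from two standard conformal scaling identities. On an $n$-dimensional manifold, the Hodge star of an $m$-form obeys $*_{cg} = c^{(n-2m)/2}\,*_g$; specializing to $n = 3$ and $m = 2$ yields $*_{cg}(dE) = c^{-1/2}\,*_g(dE)$ and the analogous identity for $dH$. For complex $1$-forms, the pointwise squared norm rescales as $|\eta|_{cg}^2 = c^{-1}|\eta|_g^2$. Substituting these two rules into the $(M,cg)$-system
\[
*_{cg} dE = i\omega \mu H + i\omega b |H|_{cg}^2 H, \qquad *_{cg} dH = -i\omega\varepsilon E - i\omega a |E|_{cg}^2 E,
\]
and multiplying through by the appropriate power of $c$ so as to isolate $*_g$ on the left, one reads off \eqref{eqn::Maxwell} on $(M,g)$ with the claimed rescaled coefficients after matching term by term.

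One must also confirm that the ambient function spaces $W^{1,p}_{\Div}(M)$ and $TW^{1-1/p,p}_{\Div}(\p M)$, the resonant set $\Sigma$, and the smallness threshold $\epsilon$ all transfer compatibly under $g \mapsto cg$; this is automatic because the conformal factor $c$ is smooth and strictly positive on the compact manifold $M$, so the induced Sobolev norms are equivalent and Theorem~\ref{thm::main result 1} applies uniformly under either description. Hence the two admittance maps are genuinely defined on the same domain and target, and the pointwise PDE equivalence upgrades to equality of the maps. There is no substantive obstacle; the only care required is the careful bookkeeping of the conformal weights of $*$ and $|\cdot|^2_g$ to match the prescribed powers of $c$ in the coefficients of $\varepsilon,\mu$ and of $a,b$.
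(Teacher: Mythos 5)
You follow essentially the same route as the paper: reduce the equality of admittance maps to a pointwise equivalence of the two PDE systems, and verify it via the conformal scaling of the Hodge star (your $*_{cg}=c^{(n-2m)/2}*_g$ on $m$-forms is the paper's $*_{cg}u=c^{3/2-k}*_g u$ with $n=3$) together with $|\eta|_{cg}^2=c^{-1}|\eta|_g^2$ on $1$-forms; your extra remark that the Sobolev spaces, the resonant set and the smallness threshold transfer under the conformal change is a point the paper leaves implicit, and is a reasonable addition. The one caveat is that the ``matching term by term'' which you (like the paper) do not write out does not actually produce the exponents stated in the lemma. Multiplying $*_{cg}dE=i\omega\mu H+i\omega b|H|_{cg}^2H$ by $c^{1/2}$ and using $*_{cg}dE=c^{-1/2}*_gdE$ and $|H|_{cg}^2=c^{-1}|H|_g^2$ gives $*_gdE=i\omega c^{1/2}\mu H+i\omega c^{-1/2}b|H|_g^2H$, so the nonlinear coefficients transform as $a\mapsto c^{-1/2}a$ and $b\mapsto c^{-1/2}b$, not $c^{3/2}a$ and $c^{3/2}b$; the latter would correspond to measuring $|E|$, $|H|$ with the metric on vectors rather than on covectors. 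This discrepancy is harmless for the way the lemma is used --- either exponent reduces Theorem~\ref{thm::main result 2} to the case $c=1$, and $a_1=a_2$ follows from $c^{\pm 1/2}a_1=c^{\pm 1/2}a_2$ since $c>0$ --- but your assertion that the bookkeeping ``matches the prescribed powers of $c$'' is precisely the step where the issue lives, so it should be carried out explicitly rather than asserted.
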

\begin{proof}
Let $*_{cg}$ and $*_g$ denote the Hodge star operators corresponding to the metrics $cg$ and $g$, respectively. Following \cite[Lemma~7.1]{KenigSaloUhlmann2011a}, we note that { $*_{cg}u=c^{3/2-k}*_g u$} for a $k$-form $u$. Therefore, $(E,H)$ solves
$$
*_{cg}dE=i\omega\mu H+i\omega b|H|_{cg}^{2}H,\quad *_{cg}dH=-i\omega \varepsilon E-i\omega a|E|_{cg}^{2}E
$$
if and only if it solves
$$
*_{g}dE=i\omega c^{1/2}\mu H+i\omega c^{3/2}b|H|_{g}^{2}H,\quad *_{g}dH=-i\omega c^{1/2}\varepsilon E-i\omega c^{3/2}a|E|_{g}^{2}E.
$$
Therefore, $\Lambda^\omega_{cg,\varepsilon,\mu,a,b}=\Lambda^\omega_{g,c^{1/2}\varepsilon,c^{1/2}\mu,c^{3/2}a,c^{3/2}b}$.
\end{proof}

Therefore, it is enough to prove Theorem~\ref{thm::main result 2} in the case $c=1$. Thus, in the rest of this section we assume that $(M,g)\subset\subset \R\times (M_0,g_0)$ with $g=e\oplus g_0$, where $(M_0,g_0)$ is a simple manifold of dimension two.\smallskip

By \eqref{eqn::1st term in asymptotics of the admittance map} in Proposition~\ref{prop::asymptotics of the admittance map}, we obtain $\Lambda^\omega_{\varepsilon_1,\mu_1}=\Lambda^\omega_{\varepsilon_2,\mu_2}$. Then by \cite[Theorem~1.1]{KenigSaloUhlmann2011a}, we get $\varepsilon_1=\varepsilon_2$ and $\mu_1=\mu_2$ in $M$. In what follows, we write $\varepsilon=\varepsilon_1=\varepsilon_2$ and $\mu=\mu_1=\mu_2$.\smallskip

{\section{Construction of CGO solutions}\label{sec:CGO}

Our aim is to very briefly review the construction of CGO solutions; see \cite{KenigSaloUhlmann2011a} for details. In Section~\ref{sec:reduction to Hodge Schrodinger}, we recall the reduction of the Maxwell equations to the Hodge-Dirac and Schr\"odinger type equations, introduced in \cite{OlaSomersalo1996,KenigSaloUhlmann2011a}. Then, in Section~\ref{sec:CGO for Maxwell}, we restate the form of existence and basic properties of CGO solutions for Maxwell's equations using the reduction in Section~\ref{sec:reduction to Hodge Schrodinger}.

\subsection{Reduction to the Hodge-Schr\"odinger equation}\label{sec:reduction to Hodge Schrodinger} Let $(M,g)$ be a smooth compact Riemannian 3-dimensional manifold with boundary. The arguments require $\varepsilon,\mu\in C^2(M)$ to be complex functions with positive real parts. If $\Phi,\Psi$ are complex scalar functions on $M$ and $E,H$ are complex $1$-forms on $M$, we consider the graded forms $X=\Phi+E+*H+*\Psi$ and we denote them in vector notation
$$
X=(\begin{array}{cc|cc} \Phi&*H&*\Psi&E \end{array})^t.
$$
We define the following matrix operators acting on graded forms on $M$
\begin{align*}
P&=\frac{1}{i}(d-\delta)=\(\begin{array}{cc|cc}
&&&-\delta\\
&&-\delta&d\\ \hline
&d&&\\
d&-\delta&&
\end{array}\),\quad
V&=\(\begin{array}{cc|cc}
-\omega\mu&&&*(D\alpha\wedge*\,\cdot)\\
&-\omega\mu&*(D\alpha\wedge*\,\cdot)&\\ \hline
&D\beta\wedge&-\omega\varepsilon&\\
D\beta\wedge&&&-\omega\varepsilon
\end{array}\),
\end{align*}
where $D=-id$, $\alpha=\log\varepsilon$ and $\beta=\log\mu$. Note that $P$ is the self-adjoint Hodge-Dirac operator. It was shown in \cite[Section~3]{KenigSaloUhlmann2011a} that $(E,H)$ is a solution of the original Maxwell's equations
$$
*dE=i\omega\mu H,\quad *dH=-i\omega \varepsilon E
$$
if and only if $X=(\begin{array}{cc|cc} \Phi&*H&*\Psi&E \end{array})^t$ is a solution of $(P+V)X=0$ with $\Phi=\Psi=0$.
\smallskip

To reduce the Maxwell equations to the Schr\"odinger type equation, we consider the rescaling
$$
Y=\(\begin{array}{c|c}\mu^{1/2}\id_2&\\ \hline
&\varepsilon^{1/2}\id_2\end{array}\)X,
$$
where $\id_2$ is the $2\times 2$ identity matrix. We always assume that $X$ and $Y$ are related via this rescaling. We write the graded form $Y$ as
$$
Y=(\begin{array}{cc|cc} Y^0&Y^2&Y^3&Y^1 \end{array})^t,
$$
with $Y^k$ being the $k$-form part of $Y$. One can check that $(P+V)X=0$ if and only if $(P+W)Y=0$. Here
$$
W=-\kappa+\frac12 \(\begin{array}{cc|cc}
&&&*(D\alpha\wedge*\,\cdot)\\
&&*(D\alpha\wedge*\,\cdot)&-D\alpha\wedge\\ \hline
&D\beta\wedge&&\\
D\beta\wedge&*(D\beta\wedge*\,\cdot)&&
\end{array}\),\quad \kappa=\omega(\varepsilon\mu)^{1/2}.
$$
Then
$$
(P+W)(P-W^t)=-\Delta+Q,
$$
where $Q$ is $L^\infty$ potential. For the exact expression of $Q$, see \cite[Lemma~3.1]{KenigSaloUhlmann2011a}.

\subsection{CGO solutions for Maxwell's equations}\label{sec:CGO for Maxwell}

Let $(M,g)$ be a $3$-dimensional admissible manifold. Throughout this section, we assume that $M\subset \R\times M_0^{\rm int}$ and the metric $g$ has the form $g=e\oplus g_0$ and $(M_0,g_0)$ is simple. Choose another simple manifold $(\widetilde M_0,g_0)$ such that $M_0\subset\subset\widetilde M_0$ and choose $p\in \widetilde M_0\setminus M_0$. Simplicity of $(\widetilde M_0,g_0)$ implies that there are globally defined polar coordinates $(r,\theta)$ centered at $p$. In these coordinates, the metric $g$ has the form
\begin{equation}\label{eqn::form of g}
g=e\oplus \bigg(\,\begin{matrix}
1&0\\
0&m(r,\theta)
\end{matrix}\,\bigg),
\end{equation}
where $m$ is a smooth positive function.\smallskip

The following result states the existence and basic properties of CGO solutions.

\begin{Proposition}\label{prop::CGO solutions}
Let $(M,g)$ be a $3$-dimensional admissible manifold with $g=e\oplus g_0$ and let $2\le p\le 6$. Suppose that $\varepsilon,\mu\in C^3(M)$ with $\Re(\varepsilon),\Re(\mu)>0$ in $M$. Let $s_0,t_0\in\R$ and $\lambda\in\R\setminus\{0\}$ be constants and let $\chi\in C^\infty(S^1)$. Then for $\tau\in\R$ with sufficiently large $|\tau|>0$ and outside a countable subset of $\R$, the Maxwell's equations
\begin{equation}\label{eqn::Maxwell equation in CGO section}
*dE=i\omega\mu H,\quad *dH=-i\omega \varepsilon E
\end{equation}
has a solution $(E,H)\in W^{1,p}_{\Div}(M)\times W^{1,p}_{\Div}(M)$ of the form
\begin{align*}
E&=e^{-\tau(x_1+ir)}\Big[t_0\varepsilon^{-1/2}|g|^{-1/4}e^{i\lambda(x_1+ir)}\chi(\theta)(dx_1+idr)+R\Big],\\
H&=e^{-\tau(x_1+ir)}\Big[s_0\mu^{-1/2}|g|^{-1/4}e^{i\lambda(x_1+ir)}\chi(\theta)(dx_1+idr)+R'\Big],
\end{align*}
where $R,R'\in W^{1,p}_{\Div}(M)$ are correction terms satisfying the estimates
\begin{equation}\label{est::Lp estimates for R and R'}
\|R\|_{L^p\Omega^1(M)},\|R'\|_{L^p\Omega^1(M)}\le C\frac{1}{|\tau|^{\frac{6-p}{2p}}},
\end{equation}
with $C>0$ constant independent of $\tau$. 
Note that when $p< 6$ the remainders decay as $\tau$ increases. 

\end{Proposition}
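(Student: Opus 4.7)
The plan is to use the reduction from Section~\ref{sec:reduction to Hodge Schrodinger}: set $Y = (P-W^t) Z$, so that the factorization $(P+W)(P-W^t) = -\Delta + Q$ turns the Maxwell system into the Hodge-Schr\"odinger equation
$$
(-\Delta+Q) Z = 0.
$$
Thus it suffices to produce a CGO solution $Z$ of this matrix-valued Schr\"odinger equation on the admissible manifold $(M,g)$, arrange that the $0$- and $3$-form components of the resulting $Y$ vanish, and then undo the rescaling $X\leftrightarrow Y$ by $\mu^{1/2},\varepsilon^{1/2}$ to read off $E$ and $H$.

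For the Schr\"odinger side I would follow the construction of Dos Santos Ferreira-Kenig-Salo-Uhlmann and Kenig-Salo-Uhlmann. With the product metric $g = e \oplus g_0$ and polar coordinates $(r,\theta)$ on the enlarged simple surface $\widetilde M_0$, the function $\varphi = x_1 + ir$ is a complex limiting Carleman weight, and I would seek
$$
Z = e^{-\tau \varphi}\bigl(a_0 + r_\tau\bigr),
$$
where $a_0$ solves the leading-order transport equation arising from $e^{\tau\varphi}(-\Delta) e^{-\tau\varphi} a_0 = O(1)$. Thanks to the warped-product structure of $g$, this transport equation is explicitly solved by the Gaussian-beam amplitude $a_0 = |g|^{-1/4} e^{i\lambda \varphi} \chi(\theta)$, which I would dress with the appropriate constant graded vector in the $(\Phi,*H,*\Psi,E)$ decomposition (involving $s_0, t_0, \varepsilon^{-1/2}, \mu^{-1/2}$) so that applying $P - W^t$ produces a $Y$ whose principal part has precisely the $dx_1 + idr$ structure advertised in the proposition.

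The correction $r_\tau$ is obtained from the Carleman estimate for $e^{\tau\varphi}(-\Delta+Q)e^{-\tau\varphi}$ on admissible manifolds, valid for large $|\tau|$ outside a countable exceptional set. This yields an $L^2$ solution with the quantitative bound $\|r_\tau\|_{L^2} \le C |\tau|^{-1}$, and the equation together with standard elliptic estimates gives $\|r_\tau\|_{H^1} \le C$. Since $\dim M = 3$, Sobolev embedding gives $\|r_\tau\|_{L^6} \le C$, and interpolation with $\theta$ determined by $\tfrac{1}{p} = \tfrac{\theta}{2} + \tfrac{1-\theta}{6}$, i.e. $\theta = (6-p)/(2p)$, produces
$$
\|r_\tau\|_{L^p} \le \|r_\tau\|_{L^2}^{\theta}\,\|r_\tau\|_{L^6}^{1-\theta} \le C |\tau|^{-(6-p)/(2p)},
$$
which matches exactly the rate in~\eqref{est::Lp estimates for R and R'}.

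Unwinding $Y = (P - W^t) Z$ and the rescaling by $\varepsilon^{1/2}, \mu^{1/2}$ then delivers $E$ and $H$ of the required form, with correction terms $R, R'$ inherited from $r_\tau$. The main obstacle I expect is twofold. First, one must cleanly arrange the grading of $Z$ (and in particular the sub-principal polynomial factor accompanying $a_0$) so that $Y^0 = Y^3 = 0$; in the Kenig-Salo-Uhlmann scheme this amounts to an auxiliary compatibility condition on the amplitude and is where the assumption $\varepsilon,\mu\in C^3$ enters. Second, one must promote the $L^p$ control of $R, R'$ to membership in $W^{1,p}_{\Div}(M)$: this is not done by differentiating the Carleman estimate, but algebraically, since once the $L^p$ bound on $R, R'$ is in hand, $dR, dR'$ and $\Div(\mathbf t(R))$ can be recovered from the Maxwell equations~\eqref{eqn::Maxwell equation in CGO section} themselves together with the explicit form of the principal amplitude.
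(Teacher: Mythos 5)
Your overall strategy is the same as the paper's: reduce to the Hodge--Schr\"odinger equation via $Y=(P-W^t)Z$ and $(P+W)(P-W^t)=-\Delta+Q$, build a CGO solution $Z=e^{-\tau(x_1+ir)}(A+r_\tau)$ with the Gaussian-beam amplitude $|g|^{-1/4}e^{i\lambda(x_1+ir)}\chi(\theta)$, arrange $Y^0=Y^3=0$, undo the rescaling, and get the $L^p$ rate $|\tau|^{-(6-p)/(2p)}$ by interpolating between $L^2$ decay and an $L^6$-type bound. The paper outsources the $Z$-level construction (including the condition $Y^0=Y^3=0$ and the full scale of a priori bounds) to the cited theorem of Chung--Ola--Salo--Tzou / Kenig--Salo--Uhlmann and then explicitly computes $Y^1$ and $Y^2$; your plan to rerun the Carleman-estimate construction is the same machinery.

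There is, however, a concrete gap in your derivative accounting. Your interpolation bounds $\|r_\tau\|_{L^p}$, where $r_\tau$ is the correction in $Z$; but the correction terms $R,R'$ of the Proposition sit inside $Y=(P-W^t)Z$, so (after the $\tau^{-1}$ normalization) they contain $\tau^{-1}\nabla r_\tau$ as well as $r_\tau$ itself. To transfer the rate $|\tau|^{-(6-p)/(2p)}$ to $R,R'$ you need the two endpoint bounds \emph{for $R,R'$}, namely $\|R\|_{L^2}\lesssim|\tau|^{-1}$ and $\|R\|_{H^1}\lesssim 1$, and the second of these requires $\|r_\tau\|_{H^2}\lesssim|\tau|$ --- one derivative more than the $L^2$--$H^1$ output you extract from the Carleman estimate. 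This is precisely what the paper's citation supplies ($\|R_0\|_{H^s}\le C|\tau|^{s-1}$ on the range $0\le s\le 2$, together with $Z\in H^3$), and it is also what makes the membership claim work: the paper gets $(E,H)\in H^2\times H^2$ from $Z\in H^3$ and then uses the Sobolev embedding $H^2\hookrightarrow W^{1,p}$ ($n=3$, $p\le 6$) plus the Maxwell equations and \eqref{eqn::expression for Div} to verify $\mathbf t(E),\mathbf t(H)\in TW^{1-1/p,p}_{\Div}(\p M)$. Your proposed ``algebraic'' route to $W^{1,p}_{\Div}$ via the equations only lands you in $W^p_d\cap W^p_\delta$, and upgrading that to $W^{1,p}$ through Propositions~\ref{prop::bounded embedding of W^p_{d,delta} with regular tangential trace into W^1p} or \ref{prop::bounded embedding of W^p_{d,delta} with regular normal trace into W^1p} requires boundary trace regularity as an input, which is not available at that stage. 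Both issues are repaired by carrying the Carleman/regularity estimates two derivatives higher (this is also where the hypothesis $\varepsilon,\mu\in C^3$ is consumed), but as written the proposal does not close.
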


\begin{Remark}{\rm The proof follows by carefully rewriting the result \cite[Theorem~3.1]{ChungOlaSaloTzou2016} (see also \cite[Theorem~6.1(a)]{KenigSaloUhlmann2011a} for the original result) which requires $\varepsilon,\mu\in C^3(M)$. We need the former result in order to get \eqref{est::Lp estimates for R and R'} for all $2\ge p\le 6$ rather than just $p=2$. This will be very important in the next section.}
\end{Remark}

\begin{proof}
By \cite[Theorem~3.1]{ChungOlaSaloTzou2016}, for $\tau\in\R$ with sufficiently large $|\tau|>0$ and outside a countable subset of $\R$, there is a solution for $(-\Delta+Q)Z=0$ such that $Z\in H^3\Omega(M)$ and
$$
(P+W)Y=0,\quad Y=(P-W^t)Z,\quad Y^0=Y^3=0\quad\text{in}\quad M,
$$
and having the form
$$
Z=e^{-\tau(x_1+ir)}(A+R_0),\quad A=-i|g|^{-1/4}e^{i\lambda(x_1+ir)}\chi(\theta) (\begin{array}{cc|cc} s_0&0&t_0*1&0 \end{array})^t
$$
and $\|R_0\|_{H^s\Omega(M)}\le C|\tau|^{1-s}$, $0\le s\le 2$, where $C>0$ is a constant independent of $\tau$.\smallskip

Let us compute $Y^1$ and $Y^3$. Writing $\rho=x_1+ir$ and using the fact that $A^1=A^2=0$, one can see that
$$
Y^1=[(P-W^t)Z]^1=e^{-\tau\rho}(y_1+r_1),\quad Y^2=[(P-W^t)Z]^2=e^{-\tau\rho}(y_2+r_2),
$$
where
\begin{align*}
y_1&=-\frac{\tau}{i}i_{d\rho}A^2,\quad r_1=-[W^t(A+R_0)]^1-\frac{1}{i}\delta A^2+\frac{1}{i}dR^0_0-\frac{\tau}{i}R_0^0\,d\rho-\frac{1}{i}\delta R_0^2-\frac{\tau}{i}i_{d\rho}R_0^2,\\
y_2&=-\frac{\tau}{i}i_{d\rho}A^3,\quad r_2=-[W^t(A+R_0)]^2-\frac{1}{i}\delta A^3+\frac{1}{i}dR^1_0-\frac{\tau}{i}d\rho\wedge R_0^1-\frac{1}{i}\delta R_0^3-\frac{\tau}{i}i_{d\rho}R_0^3.
\end{align*}
It is easy to see that
$$
\|r_1\|_{H^s\Omega^1(M)},\|r_2\|_{H^s\Omega^2(M)}\le C|\tau|^{s-1},\quad 0\le s\le 1.
$$
Using \eqref{eqn::expression for contraction}, one can show that
$$
y_1=s_0\tau |g|^{-1/4}e^{i\lambda\rho}\chi(\theta)\,d\rho,\quad y_2=t_0\tau |g|^{-1/4}e^{i\lambda\rho}\chi(\theta)*\,d\rho.
$$
Note that $Y_0:=\tau^{-1} Y$ will solve $(P+W)Y_0=0$ with $Y_0^0=Y_0^3=0$. If we define
\begin{align*}
E:&=\varepsilon^{-1/2}Y_0^1=e^{-\tau(x_1+ir)}\Big[s_0\varepsilon^{-1/2}|g|^{-1/4}e^{i\lambda(x_1+ir)}\chi(\theta)(dx_1+idr)+\underbrace{\varepsilon^{-1/2}r_1}_{R}\Big],\\
H:&=\mu^{-1/2}*Y_0^2=e^{-\tau(x_1+ir)}\Big[t_0\mu^{-1/2}|g|^{-1/4}e^{i\lambda(x_1+ir)}\chi(\theta)(dx_1+idr)+\underbrace{\mu^{-1/2}r_2}_{R'}\Big].
\end{align*}
Then $(E,H)\in H^2\Omega^1(M)\times H^2\Omega^1(M)$ will be a solution of the Maxwell's equations \eqref{eqn::Maxwell equation in CGO section} and the correction terms $R,R'$ satisfy the estimates
\begin{equation}\label{est::Hs estimates for R and R'}
\|R\|_{H^s\Omega^1(M)},\|R'\|_{H^s\Omega^1(M)}\le C|\tau|^{s-1},\quad 0\le s\le 1,
\end{equation}
with $C>0$ constant independent of $\tau$.\smallskip

By Sobolev embedding, we have $(E,H)\in W^{1,p}\Omega^1(M)\times W^{1,p}\Omega^1(M)$ for $2\le p\le 6$. Then, using \eqref{eqn::Maxwell equation in CGO section} and \eqref{eqn::expression for Div}, it is straightforward to check that $\mathbf t(E),\mathbf t(H)\in TW^{1-1/p,p}_{\Div}(\p M)$. Thus, $(E,H)\in W^{1,p}_{\Div}(M)\times W^{1,p}_{\Div}(M)$ for $2\le p\le 6$.\smallskip

Finally, one can obtain the estimates in \eqref{est::Lp estimates for R and R'}, using the inequality $0\le \frac{3p-6}{2p}\le 1$, Sobolev embedding and \eqref{est::Hs estimates for R and R'}.
\end{proof}

}\section{Proof of Theorem~\ref{thm::main result 2}: Part II}\label{sec:ab}

In this section we continue proof of Theorem~\ref{thm::main result 2}. Our aim is to show that $a_1=a_2$ and $b_1=b_2$. To that end, we shall use complex geometrical optics solutions, constructed in the previous section, in the following integral identity~\eqref{eqn::main integral identity after second polarization}.\smallskip

\subsection{An important energy integral identity}
{ Now, by \eqref{eqn::2nd term in asymptotics of the admittance map} in Proposition~\ref{prop::asymptotics of the admittance map}, we obtain $\mathbf t(H_2^{1})=\mathbf t(H_2^2)$, where $(E_2^j,H_2^j)\in W^{1,p}_{D}(M)\times W^{1,p}_{\Div}(M)$, $j=1,2$, is the unique solution of
\begin{equation}\label{eqn::nonlinear Maxwell equation with j-indice}
*dE_2^j=i\omega\mu H_2^j+i\omega b_j |H_1|_g^{2}H_1,\quad
*dH_2^j=-i\omega \varepsilon E_2^j-i\omega a_j |E_1|_g^{2}E_1
\end{equation}
with $\mathbf t(E_2^j)=0$ and $(E_1,H_1)\in W^{1,p}_{\Div}(M)\times W^{1,p}_{\Div}(M)$ is a solution of
\begin{equation}\label{eqn::linear Maxwell equation with epsilon and mu}
* dE_1=i\omega\mu H_1,\quad
* dH_1=-i\omega \varepsilon E_1
\end{equation}
satisfying $\mathbf t(E_1)=f$. Let $(E,H) \in W^{1,p}_{\Div}(M)\times W^{1,p}_{\Div}(M)$ be a solution of
\begin{equation}\label{eqn::linear Maxwell equation with bar-epsilon and bar-mu}
* dE=i\omega\overline \mu H,\quad
* dH=-i\omega\overline \varepsilon E.
\end{equation}
Using integration by parts,
\begin{align*}
(\mathbf t(H_2^j)|\mathbf t(i_\nu*E))_{L^2\Omega^1(\p M)}&=(dH_2^j|*E)_{L^2\Omega^2(M)}-(H_2^j|\delta(*E))_{L^2\Omega^1(M)}\\
&=(*dH_2^j|E)_{L^2\Omega^1(M)}-(H_2^j|*dE)_{L^2\Omega^1(M)}.
\end{align*}
Since $(E_2^j,H_2^j)$ satisfy \eqref{eqn::nonlinear Maxwell equation with j-indice} and $(E,H)$ satisfy \eqref{eqn::linear Maxwell equation with bar-epsilon and bar-mu}, we can show
\begin{align*}
(\mathbf t(H_2^j)|\mathbf t(i_\nu*E))_{L^2\Omega^1(\p M)}&=-(i\omega \varepsilon E_2^j|E)_{L^2\Omega^1(M)}-(i\omega a_j |E_1|_g^{2}E_1|E)_{L^2\Omega^1(M)}-(H_2^j|i\omega\overline \mu H)_{L^2\Omega^1(M)}\\
&=(E_2^j|i\omega\overline \varepsilon E)_{L^2\Omega^1(M)}-(i\omega a_j |E_1|_g^{2}E_1|E)_{L^2\Omega^1(M)}+(i\omega \mu H_2^j|H)_{L^2\Omega^1(M)}.
\end{align*}
{Here and in what follows, all integrals make sense because of the assumption $p\ge 4$.} Since $(E_2^j,H_2^j)$ satisfy \eqref{eqn::nonlinear Maxwell equation with j-indice} and $(E,H)$ satisfy \eqref{eqn::linear Maxwell equation with bar-epsilon and bar-mu}, this can be rewritten as
\begin{align*}
(\mathbf t(H_2^j)|\mathbf t(i_\nu*E))_{L^2\Omega^1(\p M)}&=(*dE_2^j|H)_{L^2\Omega^1(M)}-(E_2^j|*dH)_{L^2\Omega^1(M)}\\
&\quad-(i\omega a_j |E_1|_g^{2}E_1|E)_{L^2\Omega^1(M)}-(i\omega b_j|H_1|_g^{2}H_1|H)_{L^2\Omega^1(M)}\\
&=(dE_2^j|*H)_{L^2\Omega^2(M)}-(E_2^j|\delta(*H))_{L^2\Omega^1(M)}\\
&\quad-(i\omega a_j |E_1|_g^{2}E_1|E)_{L^2\Omega^1(M)}-(i\omega b_j|H_1|_g^{2}H_1|H)_{L^2\Omega^1(M)}.
\end{align*}
Using integration by parts and the fact that $\mathbf t(E_2^j)=0$, we can show that
$$
(dE_2^j|*H)_{L^2\Omega^2(M)}-(E_2^j|\delta(*H))_{L^2\Omega^1(M)}=0.
$$
Therefore,
$$
-\frac{1}{i\omega}(\mathbf t(H_2^j)|\mathbf t(i_\nu*E))_{L^2\Omega^1(\p M)}=(a_j |E_1|_g^{2}E_1|E)_{L^2\Omega^1(M)}+(b_j|H_1|_g^{2}H_1|H)_{L^2\Omega^1(M)}.
$$
Since $\mathbf t(H_2^{1})=\mathbf t(H_2^2)$, this implies that
\begin{equation}\label{eqn::main integral identity}
((a_1-a_2)|E_1|_g^{2}E_1|E)_{L^2\Omega^1(M)}+((b_1-b_2)|H_1|_g^{2}H_1|H)_{L^2\Omega^1(M)}=0
\end{equation}
for all $(E_1,H_1)\in W^{1,p}_{\Div}(M)\times W^{1,p}_{\Div}(M)$ solving \eqref{eqn::linear Maxwell equation with epsilon and mu} and for all $(E,H) \in W^{1,p}_{\Div}(M)\times W^{1,p}_{\Div}(M)$ solving \eqref{eqn::linear Maxwell equation with bar-epsilon and bar-mu}.
\smallskip

Note that if $(\widetilde E,\widetilde H),(E',H')\in W^{1,p}_{\Div}(M)\times W^{1,p}_{\Div}(M)$ solve \eqref{eqn::linear Maxwell equation with epsilon and mu}, then $(\widetilde E+E',\widetilde H+H')$ also solves \eqref{eqn::linear Maxwell equation with epsilon and mu}. Therefore, polarizing \eqref{eqn::main integral identity} by setting $(E_1,H_1)=(\widetilde E+E',\widetilde H+H')$, we obtain
\begin{equation}\label{eqn::main integral identity after first polarization}
\begin{aligned}
0&=\big((a_1-a_2)\Big[|\widetilde E|_g^{2}E'+2\Re \<E',\overline{\widetilde E}\>_g E'+|E'|^2_g\widetilde E+2\Re \<E',\overline{\widetilde E}\>_g \widetilde E\Big]\big|E\big)_{L^2\Omega^1(M)}\\
&\quad+\big((b_1-b_2)\Big[|\widetilde H|_g^{2}H'+2\Re \<H',\overline{\widetilde H}\>_g H'+|H'|^2_g\widetilde H+2\Re \<H',\overline{\widetilde H}\>_g \widetilde H\Big]\big|H\big)_{L^2\Omega^1(M)}
\end{aligned}
\end{equation}
for all $(\widetilde E,\widetilde H),(E',H')\in W^{1,p}_{\Div}(M)\times W^{1,p}_{\Div}(M)$ solving \eqref{eqn::linear Maxwell equation with epsilon and mu}.
\smallskip

Now, take $(E_{(j)},H_{(j)})\in  W^{1,p}_{\Div}(M)\times W^{1,p}_{\Div}(M)$, $j=1,2,3$, which solve \eqref{eqn::linear Maxwell equation with epsilon and mu}. Setting $(E_1,H_1)=(E_{(1)}+E_{(2)}+E_{(3)},H_{(1)}+H_{(2)}+H_{(3)})$ in \eqref{eqn::main integral identity} and using \eqref{eqn::main integral identity after first polarization}, we get
\begin{equation}\label{eqn::main integral identity after second polarization}
\begin{aligned}
0&=\big((a_1-a_2) \Re \<E_{(3)},\overline{E}_{(2)}\>_g E_{(1)}|E\big)_{L^2\Omega^1(M)}+\big((a_1-a_2) \Re \<E_{(3)},\overline{E}_{(1)}\>_g E_{(2)}|E\big)_{L^2\Omega^1(M)}\\
&\qquad+\big((a_1-a_2)\Re \<E_{(1)},\overline{E}_{(2)}\>_g E_{(3)}|E\big)_{L^2\Omega^1(M)}\\
&+\big((b_1-b_2) \Re \<H_{(3)},\overline{H}_{(2)}\>_g H_{(1)}|H\big)_{L^2\Omega^1(M)}+\big((b_1-b_2) \Re \<H_{(3)},\overline{H}_{(1)}\>_g H_{(2)}|H\big)_{L^2\Omega^1(M)}\\
&\qquad+\big((b_1-b_2) \Re \<H_{(1)},\overline{H}_{(2)}\>_g H_{(3)}|H\big)_{L^2\Omega^1(M)}
\end{aligned}
\end{equation}
for all $(E_{(j)},H_{(j)})\in  W^{1,p}_{\Div}(M)\times W^{1,p}_{\Div}(M)$, $j=1,2,3$, solving \eqref{eqn::linear Maxwell equation with epsilon and mu}.\\
}

\subsection{Proof of Theorem~\ref{thm::main result 2}: Part II}{

Recall that we assume $M\subset \R\times M_0^{\rm int}$ and the metric has the form $g=e\oplus g_0$, where $e$ is Euclidean metric on $\R$ and $(M_0,g_0)$ is a simple $2$-dimensional manifold. \smallskip

Recall that we assume $3< p<6$. Using Proposition~\ref{prop::CGO solutions}, for $\tau\in\R$ with sufficiently large $|\tau|$, for arbitrary ${ \chi}\in C^\infty(S^1)$, $s_0,t_0\in\R$ and $\lambda\in\R\setminus \{0\}$, there are $(E_{(j)},H_{(j)})\in  W^{1,p}_{\Div}(M)\times W^{1,p}_{\Div}(M)$, $j=1,2,3$, solving \eqref{eqn::linear Maxwell equation with epsilon and mu} and there is $(E,H)\in  W^{1,p}_{\Div}(M)\times W^{1,p}_{\Div}(M)$ solving \eqref{eqn::linear Maxwell equation with bar-epsilon and bar-mu} of the forms
\begin{align*}
E_{(1)}&=e^{-\tau(x_1+ir)}\Big[t_0\,\varepsilon^{-1/2}|g|^{-1/4} e^{i\lambda(x_1+ir)}\chi(\theta)(dx_1+idr)+R_{(1)}\Big]=e^{-\tau(x_1+ir)}(A_{(1)}+R_{(1)}),\\
H_{(1)}&=e^{-\tau(x_1+ir)}\Big[s_0\,\mu^{-1/2}|g|^{-1/4} e^{i\lambda(x_1+ir)}\chi(\theta)(dx_1+idr)+R_{(1)}'\Big]=e^{-\tau(x_1+ir)}(A'_{(1)}+R_{(1)}'),\\
E_{(2)}&=e^{\tau(x_1-ir)}\Big[\varepsilon^{-1/2}|g|^{-1/4} e^{i\lambda(x_1-ir)}(dx_1-idr)+R_{(2)}\Big]=e^{\tau(x_1-ir)}(A_{(2)}+R_{(2)}),\\
H_{(2)}&=e^{\tau(x_1-ir)}\Big[ \mu^{-1/2}|g|^{-1/4} e^{i\lambda(x_1-ir)}(dx_1-idr)+R_{(2)}'\Big]=e^{\tau(x_1-ir)}(A'_{(2)}+R_{(2)}'),\\
E_{(3)}&=e^{-\tau(x_1-ir)}\Big[\varepsilon^{-1/2}|g|^{-1/4} e^{-i\lambda(x_1-ir)}(dx_1-idr)+R_{(3)}\Big]=e^{-\tau(x_1-ir)}(A_{(3)}+R_{(3)}),\\
H_{(3)}&=e^{-\tau(x_1-ir)}\Big[ \mu^{-1/2}|g|^{-1/4} e^{-i\lambda(x_1-ir)}(dx_1-idr)+R_{(2)}'\Big]=e^{-\tau(x_1-ir)}(A'_{(3)}+R_{(3)}'),\\
E&=e^{\tau(x_1+ir)}\Big[\varepsilon^{-1/2}|g|^{-1/4} e^{i\lambda(x_1+ir)}(dx_1+idr)+R\Big]=e^{-\tau(x_1+ir)}(A+R),\\
H&=e^{\tau(x_1+ir)}\Big[\mu^{-1/2}|g|^{-1/4} e^{i\lambda(x_1+ir)}(dx_1+idr)+R'\Big]=e^{-\tau(x_1+ir)}(A'+R'),
\end{align*}
with
$$
\|R_{(j)}\|_{L^p\Omega^1(M)},\|R'_{(j)}\|_{L^p\Omega^1(M)},\|R\|_{L^p\Omega^1(M)},\|R'\|_{L^p\Omega^1(M)}\le C\frac{1}{|\tau|^{\frac{6-p}{2p}}},\quad j=1,2,3.
$$
Since we assume $p<6$, these imply
\begin{equation}\label{eqn::decay for correction terms}
\|R_{(j)}\|_{L^p\Omega^1(M)},\|R'_{(j)}\|_{L^p\Omega^1(M)},\|R\|_{L^p\Omega^1(M)},\|R'\|_{L^p\Omega^1(M)}\le o(1)\quad\text{as }\tau\to\infty,\quad j=1,2,3.
\end{equation}
Substituting these solutions into~\eqref{eqn::main integral identity after second polarization}, we get
\begin{equation}\label{eqn::integral identity second step}
\begin{aligned}
0&=\big((a_1-a_2) \Re\<A_{(3)}+R_{(3)},\overline A_{(2)}+\overline R_{(2)}\>_g(A_{(1)}+R_{(1)})|(A+R)\big)_{L^2\Omega^1(M)}\\
&\quad +\big((a_1-a_2) \Re\<A_{(3)}+R_{(3)},\overline A_{(1)}+\overline R_{(1)}\>_g(A_{(2)}+R_{(2)})|(A+R)\big)_{L^2\Omega^1(M)}\\
&\quad +\big((a_1-a_2) \Re\<A_{(1)}+R_{(1)},\overline A_{(2)}+\overline R_{(2)}\>_g(A_{(3)}+R_{(3)})|(A+R)\big)_{L^2\Omega^1(M)}\\
&\quad+\big((b_1-b_2) \Re\<A'_{(3)}+R'_{(3)},\overline A'_{(2)}+\overline R'_{(2)}\>_g(A'_{(1)}+R'_{(1)})|(A'+R')\big)_{L^2\Omega^1(M)}\\
&\quad +\big((b_1-b_2) \Re\<A'_{(3)}+R'_{(3)},\overline A'_{(1)}+\overline R'_{(1)}\>_g(A'_{(2)}+R'_{(2)})|(A'+R')\big)_{L^2\Omega^1(M)}\\
&\quad +\big((b_1-b_2) \Re\<A'_{(1)}+R'_{(1)},\overline A'_{(2)}+\overline R'_{(2)}\>_g(A'_{(3)}+R'_{(3)})|(A'+R')\big)_{L^2\Omega^1(M)}.
\end{aligned}
\end{equation}
Letting $\tau\to\infty$, we come to
\begin{equation}\label{eqn::integral identity third step}
\begin{aligned}
0&=\big((a_1-a_2) \Re\<A_{(3)},\overline A_{(2)}\>_gA_{(1)}|A\big)_{L^2\Omega^1(M)}+\big((a_1-a_2) \Re\<A_{(3)},\overline A_{(1)}\>_gA_{(2)}|A\big)_{L^2\Omega^1(M)}\\
&\qquad +\big((a_1-a_2) \Re\<A_{(1)},\overline A_{(2)}\>_gA_{(3)}|A\big)_{L^2\Omega^1(M)}\\
&\quad+\big((b_1-b_2) \Re\<A'_{(3)},\overline A'_{(2)}\>_gA'_{(1)}|A'\big)_{L^2\Omega^1(M)}+\big((b_1-b_2) \Re\<A'_{(3)},\overline A'_{(1)}\>_gA'_{(2)}|A'\big)_{L^2\Omega^1(M)}\\
&\qquad +\big((b_1-b_2) \Re\<A'_{(1)},\overline A'_{(2)}\>_gA'_{(3)}|A'\big)_{L^2\Omega^1(M)}.
\end{aligned}
\end{equation}
To see this, one expands every term in \eqref{eqn::integral identity second step} and uses generalized H\"older's inequality together with \eqref{eqn::decay for correction terms}. Then all terms in \eqref{eqn::integral identity second step} go to zero as $\tau\to\infty$ except the terms written in \eqref{eqn::integral identity third step}.\smallskip

Recall that the amplitudes $A_{(j)}$, $A'_{(j)}$, $j=1,2,3$, and $A$, $A'$ are of the form
\begin{align*}
A_{(1)}&=t_0\,\varepsilon^{-1/2}|g|^{-1/4} e^{i\lambda(x_1+ir)}\chi(\theta)(dx_1+idr),\quad &A'_{(1)}&=s_0\,\mu^{-1/2}|g|^{-1/4} e^{i\lambda(x_1+ir)}\chi(\theta)(dx_1+idr),\\
A_{(2)}&=\varepsilon^{-1/2}|g|^{-1/4} e^{i\lambda(x_1-ir)}(dx_1-idr),\quad &A'_{(2)}&=\mu^{-1/2}|g|^{-1/4} e^{i\lambda(x_1-ir)}(dx_1-idr),\\
A_{(3)}&=\varepsilon^{-1/2}|g|^{-1/4} e^{-i\lambda(x_1-ir)}(dx_1-idr),\quad &A'_{(3)}&=\mu^{-1/2}|g|^{-1/4} e^{-i\lambda(x_1-ir)}(dx_1-idr),\\
A&=\varepsilon^{-1/2}|g|^{-1/4} e^{i\lambda(x_1+ir)}(dx_1+idr),\quad &A'&=\mu^{-1/2}|g|^{-1/4} e^{i\lambda(x_1+ir)}(dx_1+idr).
\end{align*}
Then the second, third, fifth and last terms in \eqref{eqn::integral identity third step} will vanish. Considering the cases $t_0=1$, $s_0=0$ and $t_0=0$, $s_0=1$ separately, we obtain
$$
\int_M fe^{-i2\lambda(x_1-ir)}\chi(\theta)|g|^{-1/2}\,d\Vol_g=0\quad\text{and}\quad\int_M he^{-i2\lambda(x_1-ir)}\chi(\theta)|g|^{-1/2}\,d\Vol_g=0,
$$
where
$$
f:=\frac{a_1-a_2}{|\varepsilon|^2|g|^{1/2}}\quad\text{and}\quad h:=\frac{b_1-b_2}{|\mu|^2|g|^{1/2}}.
$$
Now, we extend $f$ and $h$ as zero to $\R\times M_0$. Since $d\Vol_g=|g|^{1/2}\,dx_1drd\theta$, we get
$$
\int_{S^1}\chi(\theta)\int_0^\infty e^{-2\lambda r}\Big(\int_{-\infty}^\infty f e^{-i2\lambda x_1}\,dx_1\Big)\,drd\theta=0
$$
and
$$
\int_{S^1}\chi(\theta)\int_0^\infty e^{-2\lambda r}\Big(\int_{-\infty}^\infty h e^{-i2\lambda x_1}\,dx_1\Big)\,drd\theta=0.
$$
Varying $\chi\in C^\infty(S^1)$ and noting that the terms in the brackets are the one-dimensional Fourier transforms of $f$ and $h$ with respect to the $x_1$-variable, which we denote by $\widehat f$ and $\widehat h$, respectively, we get
$$
\int_0^\infty e^{-2\lambda r} \widehat f\,(2\lambda,r,\theta)\,dr=\int_0^\infty e^{-2\lambda r} \widehat h\,(2\lambda,r,\theta)\,dr=0,\quad \theta\in S^1.
$$
Recall that $(r,\theta)$ are polar coordinates in $M_0$. Therefore, $r\mapsto(r,\theta)$ is a geodesic in $M_0$ and the integrals above are the attenuated geodesic ray transforms of $\widehat f$ and $\widehat h$ on $M_0$ with constant attenuation $-2\lambda$. Then injectivity of this transform on simple manifolds of dimension two \cite[Theorem~1.1]{SaloUhlmann2011} implies that $\widehat f\,(2\lambda,\cdot)=\widehat h\,(2\lambda,\cdot)=0$ in $M_0$ for all $\lambda\in\R\setminus \{0\}$. Now, using the uniqueness result for the Fourier transform, we show that $f=h=0$ and hence $a_1=a_2$ and $b_1=b_2$ in $M$, finishing the proof of Theorem~\ref{thm::main result 2}.

}

\appendix

\section{Regularity of solutions of linear Maxwell equations}
In this section we prove the following regularity result for linear, inhomogeneous, time-harmonic Maxwell equations.
\begin{Theorem}\label{thm::main regularity result}
Let $2\leq p\leq 6$ and let $\varepsilon,\mu\in C^1(M)$ be complex functions with positive real parts. Suppose that $J_e,J_m\in W^p_\delta\Omega^1(M)$ and $i_\nu J_e|_{\p M}, i_\nu J_m|_{\p M}\in W^{1-1/p,p}(\p M)$. If $(E,H)\in W^{p}_d\Omega^1(M)\times W^{p}_d\Omega^1(M)$ is a solution of the Maxwell's system
\begin{equation}\label{eqn::Maxwell in Appendix1}
\begin{cases}
* dE=i\omega\mu H+J_m,\\
* dH=-i\omega \varepsilon E+J_e
\end{cases}
\end{equation}
such that $\mathbf t(E)\in TW^{1-1/p,p}_{\Div}(\p M)$, then $(E,H)\in W^{1,p}_{\Div}(M)\times W^{1,p}_{\Div}(M)$ and 
\begin{align*}
\|E\|_{W^{1,p}_{\Div}(M)}+\|H&\|_{W^{1,p}_{\Div}(M)}\\
&\le C(\|\mathbf t(E)\|_{TW^{1-1/p,p}_{\Div}(\p M)}+\|J_e\|_{W^{p}_\delta\Omega^1(M)}+\|J_m\|_{W^{p}_\delta\Omega^1(M)})\\
&\qquad+C(\|i_\nu J_e|_{\p M}\|_{W^{1-1/p,p}(\p M)}+\|i_\nu J_m|_{\p M}\|_{W^{1-1/p,p}(\p M)})
\end{align*}
for some constant $C>0$ independent of $E$, $H$, $J_e$ and $J_m$.
\end{Theorem}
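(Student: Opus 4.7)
The strategy is to invoke the embedding Propositions~\ref{prop::bounded embedding of W^p_{d,delta} with regular tangential trace into W^1p} and~\ref{prop::bounded embedding of W^p_{d,delta} with regular normal trace into W^1p}, which upgrade $W^p_d\cap W^p_\delta$ to $W^{1,p}$ once a Sobolev-regular tangential (resp.\ normal) trace is available. Accordingly, the plan is: (i) produce $\delta E,\delta H\in L^p$ from the Maxwell system, so that $(E,H)\in (W^p_d\cap W^p_\delta)^2$; (ii) feed $\mathbf t(E)$ into Proposition~\ref{prop::bounded embedding of W^p_{d,delta} with regular tangential trace into W^1p} to obtain $E\in W^{1,p}\Omega^1(M)$; (iii) use the first Maxwell equation on $\p M$ to manufacture $\mathbf t(i_\nu H)\in W^{1-1/p,p}(\p M)$, so Proposition~\ref{prop::bounded embedding of W^p_{d,delta} with regular normal trace into W^1p} applies to $H$; (iv) read off $\Div(\mathbf t(H))\in W^{1-1/p,p}(\p M)$ from the second Maxwell equation and Sobolev tracing of $i_\nu E$.

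The $W^p_\delta$-regularity is obtained from the system itself. Applying $*$ to $*dH=-i\omega\varepsilon E+J_e$ gives $dH=-i\omega\varepsilon\,{*}E+*J_e$; then $d^2H=0$ yields $d(\varepsilon\,{*}E)=-\tfrac{i}{\omega}\,d({*}J_e)$. Expanding by the Leibniz rule, using $d({*}E)=-*\delta E$, and applying $*$ to the resulting $3$-form identity leads to
$$
\delta E=\frac{1}{\varepsilon}\Big(\<d\varepsilon,E\>_g-\tfrac{i}{\omega}\delta J_e\Big),\qquad \delta H=\frac{1}{\mu}\Big(\<d\mu,H\>_g+\tfrac{i}{\omega}\delta J_m\Big),
$$
the second identity coming from the symmetric computation with $*dE=i\omega\mu H+J_m$. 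Both right-hand sides are in $L^p(M)$ since $\varepsilon,\mu\in C^1$ have positive real parts and $E,H,\delta J_e,\delta J_m\in L^p$. The identity is justified distributionally: for $\phi\in C^\infty_0(M^{\rm int})$, one writes $\varepsilon E=(J_e-{*}dH)/(i\omega)$, pairs with $d\phi$ using the $L^2$ inner product on $1$-forms, and uses the definition of $\delta J_e$ together with the weak cancellation $\delta({*}dH)=-*d^2H=0$ to identify $(E\,|\,d\phi)_{L^2}$ with the pairing of $\phi$ against the prescribed $L^p$ function.

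With $(E,H)\in(W^p_d\cap W^p_\delta)^2$ secured, the hypothesis $\mathbf t(E)\in TW^{1-1/p,p}_{\Div}(\p M)\subset W^{1-1/p,p}\Omega^1(\p M)$ combined with Proposition~\ref{prop::bounded embedding of W^p_{d,delta} with regular tangential trace into W^1p} gives $E\in W^{1,p}\Omega^1(M)$ together with the quantitative bound. By~\eqref{eqn::expression for Div}, $\Div(\mathbf t(E))=i_\nu({*}dE)|_{\p M}\in W^{1-1/p,p}(\p M)$; substituting $*dE=i\omega\mu H+J_m$ into this identity and solving for the $H$ contribution produces
$$
\mathbf t(i_\nu H)=\frac{1}{i\omega(\mu|_{\p M})}\big(\Div(\mathbf t(E))-i_\nu J_m|_{\p M}\big)\in W^{1-1/p,p}(\p M),
$$
since $\mu|_{\p M}\in C^1(\p M)$ is non-vanishing. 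Proposition~\ref{prop::bounded embedding of W^p_{d,delta} with regular normal trace into W^1p} then yields $H\in W^{1,p}\Omega^1(M)$. Repeating the trace reasoning with the second Maxwell equation $*dH=-i\omega\varepsilon E+J_e$ and the now-established regularity $E\in W^{1,p}$ (so $\mathbf t(i_\nu E)\in W^{1-1/p,p}$ by standard tracing) gives $\Div(\mathbf t(H))=-i\omega(\varepsilon|_{\p M})\mathbf t(i_\nu E)+i_\nu J_e|_{\p M}\in W^{1-1/p,p}(\p M)$, i.e.\ $H\in W^{1,p}_{\Div}(M)$. Summing the two embedding estimates together with the $L^p$-bound for $\delta E,\delta H$ from Step~1 and the trace identities above yields the quantitative inequality claimed.

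The main obstacle is the rigorous derivation of the $\delta E,\delta H$ formulas when only $W^p_d$-regularity is available a priori: the intermediate steps $d(\varepsilon\,{*}E)$, $d({*}E)=-*\delta E$, and the cancellation $\delta({*}dH)=0$ must all be interpreted weakly, so the duality argument against test functions in $C^\infty_0(M^{\rm int})$ sketched above is essential and has to be executed carefully. Once that point is handled, everything else is a clean chain of applications of the tools developed in Section~\ref{section::on H_d and H_delta spaces}.
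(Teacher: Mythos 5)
Your proposal is correct and follows essentially the same route as the paper's own proof: apply $\delta$ to the system to get $\delta E=i_{d\log\varepsilon}E+(i\omega\varepsilon)^{-1}\delta J_e$ and $\delta H=i_{d\log\mu}H-(i\omega\mu)^{-1}\delta J_m$ (your formulas are these, rewritten), then invoke Proposition~\ref{prop::bounded embedding of W^p_{d,delta} with regular tangential trace into W^1p} for $E$, manufacture $i_\nu H|_{\p M}$ from $\Div(\mathbf t(E))$ and $i_\nu J_m|_{\p M}$ via \eqref{eqn::expression for Div} to apply Proposition~\ref{prop::bounded embedding of W^p_{d,delta} with regular normal trace into W^1p} to $H$, and finally read off $\Div(\mathbf t(H))$ from the second equation. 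The only difference is that you spell out the distributional justification of the $\delta E,\delta H$ identities, which the paper performs formally.
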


\begin{proof}
We apply $\delta$ to the Maxwell equations \eqref{eqn::Maxwell in Appendix1} to obtain
\begin{equation}\label{eqn::divergence nonhomogenous Maxwell in appendix}
\begin{cases}
\delta H=i_{d\log\mu}H-(i\omega\mu)^{-1}\delta J_m,\\
\delta E=i_{d\log\varepsilon}E+(i\omega\varepsilon)^{-1}\delta J_e.
\end{cases}
\end{equation}
Since $E,H\in W^{p}_d\Omega^1(M)$ and $J_e,J_m\in W^p_\delta\Omega^1(M)$, this clearly implies that $E,H\in W^{p}_d\Omega^1(M)\cap W^p_{\delta}\Omega^1(M)$. Now, by Proposition~\ref{prop::bounded embedding of W^p_{d,delta} with regular tangential trace into W^1p}, we get $E\in W^{1,p}_{\Div}(M)$ and
$$
\|E\|_{W^{1,p}_{\Div}(M)}\le C(\|E\|_{W^{p}_d\Omega^1(M)}+\|\mathbf t(E)\|_{TW^{1-1/p,p}_{\Div}(\p M)}+\|J_e\|_{W^{p}_\delta\Omega^1(M)}+\|J_m\|_{W^{p}_\delta\Omega^1(M)}),
$$
since $\mathbf t(E)\in TW^{1-1/p,p}_{\Div}(\p M)$.\smallskip

To show that $H\in W^{1,p}\Omega^1(M)$, we use similar reasonings. Using \eqref{eqn::Maxwell in Appendix1} and \eqref{eqn::expression for Div} we get
$$
i_\nu H|_{\p M}=\frac{1}{i\omega \mu}i_\nu*dE|_{\p M}-\frac{1}{i\omega \mu}i_\nu J_m|_{\p M} \in W^{1-1/p,p}(\p M),
$$
since $\mathbf t(E)\in TW^{1-1/p,p}_{\Div}(\p M)$ and $i_\nu J_m|_{\p M}\in W^{1-1/p,p}(\p M)$. Then by Proposition~\ref{prop::bounded embedding of W^p_{d,delta} with regular normal trace into W^1p}, we have $H\in W^{1,p}\Omega^1(M)$ and
\begin{align*}
\|H\|_{W^{1,p}\Omega^1(M)}&\le C(\|H\|_{W^p_d\Omega^1(M)}+\|\mathbf t(E)\|_{TW^{1-1/p,p}_{\Div}(\p M)})\\
&\qquad+C(\|J_m\|_{W^p_\delta\Omega^1(M)}+\|i_\nu J_m|_{\p M}\|_{W^{1-1/p,p}(\p M)}).
\end{align*}
We also have $H\in W^{1,p}_{\Div}(M)$, since $\mathbf t(H)\in W^{1-1/p,p}(\p M)$ and
$$
\Div(\mathbf t(H))=i_\nu *dH|_{\p M}=-i\omega \varepsilon\, i_\nu E|_{\p M}+i_\nu J_e|_{\p M}\in W^{1-1/p,p}(\p M),
$$
where we have used \eqref{eqn::expression for Div} and the hypothesis $i_\nu J_e|_{\p M}\in W^{1-1/p,p}(\p M)$. Finally, the estimate in the statement of the theorem follows by combining all the above estimates. The proof is complete.
\end{proof}


\begin{thebibliography}{ABC}

\bibitem{Assylbekov2019maxwell} Y. M. Assylbekov, \emph{A note on time-harmonic Maxwell equations on Riemannian manifolds}, preprint, arXiv:1612.03498.

\bibitem{BukhgeimUhlmann2002} A. Bukhgeim, G. Uhlmann, \emph{Recovering a potential from partial Cauchy data}, Comm. Partial Diff. Eq. {\bf 27} (2002), no. 3-4, 653--668.

\bibitem{Calderon1980} A. Calder\'on, \emph{On an inverse boundary value problem}, Seminar on Numerical Analysis and its Applications to Continuum Physics (Rio de Janeiro, 1980)

\bibitem{ChungOlaSaloTzou2016} F. J. Chung, P. Ola, M. Salo, L. Tzou, \emph{Partial data inverse problems for Maxwell equations via Carleman estimates}, Ann. I. H. Poincare AN, {\bf 35} (2018), no. 3, 605--624.

\bibitem{Costabel1990}  M. Costabel, \emph{A remark on the regularity of solutions of Maxwell's equations in Lipschitz domains}, Math. Meth. Appl. Sci., {\bf 12} (1989), 365--368.

\bibitem{DosSantosFerreiraKenigSaloUhlmann2009} D. Dos Santos Ferreira, C. Kenig, M. Salo, G. Uhlmann, \emph{Limiting Carleman weights and anisotropic inverse problems}, Invent. Math. {\bf 178} (2009), no. 1, 119--171.


\bibitem{HSu} D. Hervas, Z. Sun, \emph{An inverse boundary value problem for quasilinear elliptic equations}, J. Partial Differential Equations, {\bf 27} (2002), 2449--2490.

\bibitem{I1} V. Isakov, \emph{On uniqueness in inverse problems for semilinear parabolic equations}, Arch. Rational Mech. Anal. {\bf 124} (1993), 1--12.

\bibitem{I2} V. Isakov, \emph{Uniqueness of recovery of some systems of semilinear partial differential equations}, Inverse Problems {\bf 17} (2001), 607--618.

\bibitem{IN} V. Isakov, A. Nachman, \emph{Global uniqueness for a two-dimensional semilinear elliptic inverse problem},
Trans. Amer. Math. Soc. {\bf 347} (1995), 3375--3390.

\bibitem{IS} V. Isakov, J. Sylvester, \emph{Global uniqueness for a semilinear elliptic inverse problem}, Comm. Pure Appl. Math. {\bf 47} (1994), 1403--1410.

\bibitem{KenigSaloUhlmann2011a} C. Kenig, M. Salo, G. Uhlmann, \emph{Inverse problems for the anisotropic Maxwell equations}, Duke Math. J. {\bf 157} (2011), no. 2, 369--419.

\bibitem{KenigSjostrandUhlmann2007} C. Kenig, J. Sj\"ostrand, G. Uhlmann, \emph{The Calder\'on problem with partial data}, Ann. of Math. (2) {\bf 165} (2007), no. 2, 567--591.

\bibitem{KirschHettlich2015} A. Kirsch, F. Hettlich,  \emph{The Mathematical Theory of Time-Harmonic Maxwell's Equations}, Applied Mathematical Sciences 190, Springer, 2015.

\bibitem{KourakisShukla2005} I. Kourakis, P. K. Shukla, \emph{Nonlinear propagation of electromagnetic waves in negative-refraction-index composite materials}, Physical Review E, {\bf 72} (2005), 016626.

\bibitem{KrupchykUhlmann2017preprint1} K. Krupchyk, G. Uhlmann,\emph{Inverse problems for magnetic Schr\"odinger operators in transversally anisotropic geometries}, Commun. Math. Phys., {\bf 361} (2018), no. 2, 525--582.



\bibitem{LazaridesTsironis2005} N. Lazarides, G. P. Tsironis, \emph{Coupled nonlinear Schr\"odinger field equations for electromagnetic wave propagation in nonlinear left-handed materials}, Physical Review E, {\bf 71} (2005), 036614.


\bibitem{Monk2003} P. Monk, \emph{Finite Element Methods for Maxwell's Equations}, Numer. Math. Sci. Comput., Oxford University Press, New York, 2003.

\bibitem{Nie1993} W. Nie, \emph{Optical nonlinearity: phenomena, applications, and materials}, Advanced Materials, {\bf 5} (1993), 520--545.

\bibitem{OlaSomersalo1996} P.~Ola and E.~Sommersalo, \emph{Electromagnetic inverse problems and generalized Sommerfeld potentials}, SIAM J. Appl. Math. {\bf 56} (1996), No. 4, 1129--1145. 

\bibitem{SaTz} M. Salo, L. Tzou, \emph{Carleman estimates and inverse problems for Dirac operators}, Math. Ann. {\bf 344} (2009), no. 1, 161--184.

\bibitem{SaloUhlmann2011} M. Salo, G. Uhlmann, \emph{The attenuated ray transform on simple surfaces} J. Diff. Geom. {\bf 88} (2011), no. 1, 161--187.

\bibitem{Schwarz1995} G. Schwarz, \emph{Hodge decomposition -- a method for solving boundary value problems}, Lecture notes in mathematics 1607, Springer, 1995.

\bibitem{Stuart1991} C. A. Stuart, \emph{Self-trapping of an electromagnetic field and bifurcation from the essential spectrum}, Arch. Rational Mech. Anal., {\bf 113} (1991), 65--96.

\bibitem{Su1} Z. Sun, \emph{On a quasilinear inverse boundary value problem}, Math. Z. {\bf 221} (1996), 293--305.

\bibitem{Su2} Z. Sun, \emph{Inverse boundary value problems for a class of semilinear elliptic equations}, Adv. Appl. Math. {\bf 32} (2004), 791--800.

\bibitem{SuU} Z. Sun, G. Uhlmann, \emph{Inverse problems in quasilinear anisotropic media}, Amer. J. Math. {\bf 119} (1997), 771--797.

\bibitem{SunUhlmann1992} Z. Sun, G. Uhlmann, \emph{An Inverse Boundary Value Problem for Maxwell's Equations}, Arch. Rational Mech. Anal., {\bf 119} (1992), 71--93.

\bibitem{SylvesterUhlmann1987} J. Sylvester, G. Uhlmann, \emph{A global uniqueness theorem for an inverse boundary value problem}, Ann. of Math. (2) {\bf 125} (1987), no. 1, 153--169.

\bibitem{Taylor1999} M. E. Taylor, \emph{Partial differential equations I: Basic theory}, Springer, 1999.

\bibitem{WenXiangDaiTangSuFan2007} S. Wen, Y. Xiang, X. Dai, Z. Tang, W. Su, D. Fan, \emph{Theoretical models for ultrashort electromagnetic pulse propagation in nonlinear metamaterials}, Physical Review A, {\bf 75} (2007), 033815.

\bibitem{ZharovShadrivovKivshar2003} A. A. Zharov, I. V. Shadrivov, Y. S. Kivshar, \emph{Nonlinear Properties of Left-Handed Metamaterials}, Physical Review Letters, {\bf 91} (2003), 037401.

\end{thebibliography}
\end{document}